\newcommand{\bburl}[1]{\textcolor{blue}{\url{#1}}}
\newcommand{\burl}[1]{\textcolor{blue}{\url{#1}}}
\newcommand\norm[1]{\left\lVert#1\right\rVert}
\numberwithin{equation}{section}
\newtheorem{thm}{Theorem}[section]
\newtheorem{conj}[thm]{Conjecture}
\newtheorem{cor}[thm]{Corollary}
\newtheorem{exa}[thm]{Example}
\newtheorem{defi}[thm]{Definition}
\theoremstyle{plain}
\newtheorem{example}[thm]{Example}
\newtheorem{lemma}[thm]{Lemma}
\newtheorem{rem}[thm]{Remark}
\theoremstyle{definition}
\newcommand\be{\begin{equation}}
\newcommand\ee{\end{equation}}
\newcommand\bee{\begin{equation*}}
\newcommand\eee{\end{equation*}}
\newcommand\bea{\begin{eqnarray}}
\newcommand\eea{\end{eqnarray}}
\newcommand\beae{\begin{eqnarray*}}
\newcommand\eeae{\end{eqnarray*}}
\newcommand\bi{\begin{itemize}}
\newcommand\ei{\end{itemize}}
\newcommand\ben{\begin{enumerate}}
\newcommand\een{\end{enumerate}}
\newcommand\bc{\begin{center}}
\newcommand\ec{\end{center}}
\newcommand\ba{\begin{array}}
\newcommand\ea{\end{array}}
\newcommand{\R}{\ensuremath{\mathbb{R}}}
\newcommand{\N}{\mathbb{N}}
\newcommand\frakfamily{\usefont{U}{yfrak}{m}{n}}
\DeclareTextFontCommand{\textfrak}{\frakfamily}
\newcommand{\hr}[1]{\href{#1}{\url{#1}}}
\g@addto@macro\bfseries{\boldmath}
\newcommand{\tss}[1]{\textsuperscript{#1}}
\title{Crescent Configurations In Normed Spaces}
\author{Sara Fish}
\email{\textcolor{blue}{\href{mailto:sfish@caltech.edu}{sfish@caltech.edu}}}
\address{Department of Mathematics, California Institute of Technology, Pasadena, CA 91126}
\author{Dylan King}
\email{\textcolor{blue}{\href{mailto:kingda16@wfu.edu}{kingda16@wfu.edu}}}
\address{Department of Mathematics, Wake Forest University, Winston-Salem, NC 27109}
\author{Steven J. Miller}
\email{\textcolor{blue}{\href{mailto:sjm1@williams.edu}{sjm1@williams.edu}},  \textcolor{blue}{\href{Steven.Miller.MC.96@aya.yale.edu}{Steven.Miller.MC.96@aya.yale.edu}}}
\address{Department of Mathematics and Statistics, Williams College, Williamstown, MA 01267}
\author{Eyvindur A. Palsson}
\email{\textcolor{blue}{\href{mailto:palsson@vt.edu}{palsson@vt.edu}}}
\address{Department of Mathematics, Virginia Tech, Blacksburg, VA 24061}
\author{Catherine Wahlenmayer}
\email{\textcolor{blue}{\href{mailto:wahlenma001@knights.gannon.edu}{wahlenma001@knights.gannon.edu}}}
\address{Department of Mathematics, Gannon University, Erie, PA 16541}
\thanks{This work was partially supported by NSF grants DMS1659037 and DMS1561945, Simons Foundation Grant \#360560, the N.S Reynolds Scholarship Committee at Wake Forest University, and Williams College. We thank Charles Devlin for helpful conversations about this problem.}
\subjclass[2010]{52C10, (primary) 52A10 (secondary)}
\keywords{crescent configuration, Erd\H{o}s problem, specified distances, }
\date{\today}
\begin{document}

\maketitle

\begin{abstract} 

We study the problem of crescent configurations, posed by Erd\H{o}s in 1989. A crescent configuration is a set of $n$ points in the plane such that: 1) no three points lie on a common line, 2) no four points lie on a common circle, 3) for each $1 \leq i \leq n - 1$, there exists a distance which occurs exactly $i$ times. Constructions of sizes $n \leq 8$ have been provided by Liu, Pal\' asti, and Pomerance. Erd\H{o}s conjectured that there exists some $N$ for which there do not exist crescent configurations of size $n$ for all $n \geq N$. 

We extend the problem of crescent configurations to general normed spaces $(\R^2, ||\cdot||)$ by studying \textit{strong crescent configurations in $||\cdot||$}. In an arbitrary norm $||\cdot||$, we construct a strong crescent configuration of size 4. We also construct larger strong crescent configurations in the Euclidean, taxicab, and Chebyshev norms, of sizes $n \leq 6$, $n \leq 8$, and $n \leq 8$ respectively. When defining strong crescent configurations, we introduce the notion of \textit{line-like configurations in $||\cdot||$}. A line-like configuration in $||\cdot||$ is a set of points whose distance graph is isomorphic to the distance graph of equally spaced points on a line. In a broad class of norms, we construct line-like configurations of arbitrary size. 

Our main result is a crescent-type result about line-like configurations in the Chebyshev norm. A \textit{line-like crescent configuration} is a line-like configuration for which no three points lie on a common line and no four points lie on a common $||\cdot||$ circle. We prove that for $n \geq 7$, every line-like crescent configuration of size $n$ in the Chebyshev norm must have a rigid structure. Specifically, it must be a \textit{perpendicular perturbation} of equally spaced points on a horizontal or vertical line.
\end{abstract}

\tableofcontents


\section{Introduction}\label{section1}

\subsection{Background}

The Erd\H{o}s distinct distances problem is a core problem in discrete geometry. It asks the following deceptively simple question: What is the minimum number of distinct distances determined by $n$ points in the plane? Erd\H{o}s posed this problem in a 1946 paper \cite{Erd46}, in which he proved the lower bound $\Omega(n^{1/2})$ using a simple geometrical argument and the upper   bound $O( n / \sqrt{\log{n}} )$ by considering the number of distinct distances determined by a $\sqrt{n} \times \sqrt{n}$ square lattice. Over the subsequent decades, this lower bound was gradually improved. In 2015, Guth and Katz \cite{GK} proved the lower bound $\Omega( n / \log{n} )$, solving the problem up to a factor of $\sqrt{\log{n} }$. 

The Erd\H{o}s distinct distances problem inspired many related questions. We study the problem of \textit{crescent configurations}, first posed by Erd\H{o}s in \cite{Erd89}. Consider the following question: What is the structure of a set of $n$ points which determines $n - 1$ distinct distances, such that for each $1 \leq i \leq n - 1$, the $i$\tss{th} distance occurs exactly $i$ times? For every $n$, many such sets exist. For example, consider $n$ equally spaced points on a line, or $n$ equally spaced points on a circular arc (Figure \ref{fig:l2_trivial}).\footnote{Not all instances of $n$ equally spaced points on a circle satisfy this property. For example, the set $\{ (0,1), (1,0), (-1,0), (0,-1) \}$ determines the distance $\sqrt{2}$ four times and the distance 2 two times. These exceptions are unimportant, so we ignore them.
} 

\begin{figure}[h!]
    \centering
    \includegraphics[scale=0.4]{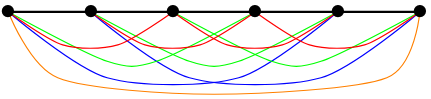}
    \includegraphics[scale=0.4]{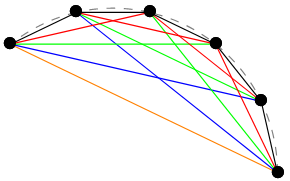}
    \caption{Equally spaced points on a line and on a circle. }
    \label{fig:l2_trivial}
\end{figure}

One might ask whether every such set must make use of the structure of lines or circles. More precisely, a set of points is said to lie in \textit{general position} if no three points lie on a common line and no four points lie on a common circle. Using this notion, we define what it means for a set of points to form a crescent configuration. 

\begin{defi}\label{def:euclcrescent}
A set of $n$ points in the plane is said to form a \textbf{crescent configuration} if the following two conditions hold.
\begin{enumerate}
    \item The $n$ points lie in general position.
    \item For each $1 \leq i \leq n - 1$, there exists a distance which occurs with multiplicity exactly $i$. 
\end{enumerate}
\end{defi}

Erd\H{o}s' question was the following: For which $n$ does there exist a crescent configuration of size $n$? Constructions of crescent configurations of size $ n \leq 8$ have been provided by Liu, Pal\' asti, and Pomerance \cite{Liu, Pal87, Pal89, Erd89}. These constructions are non-obvious and geometrically intricate. For example, Pal\' asti's crescent configuration of size 8 is depicted in Figure \ref{fig:palasti8}. 

\begin{figure}[h!]
    \centering
    \includegraphics[scale=0.17]{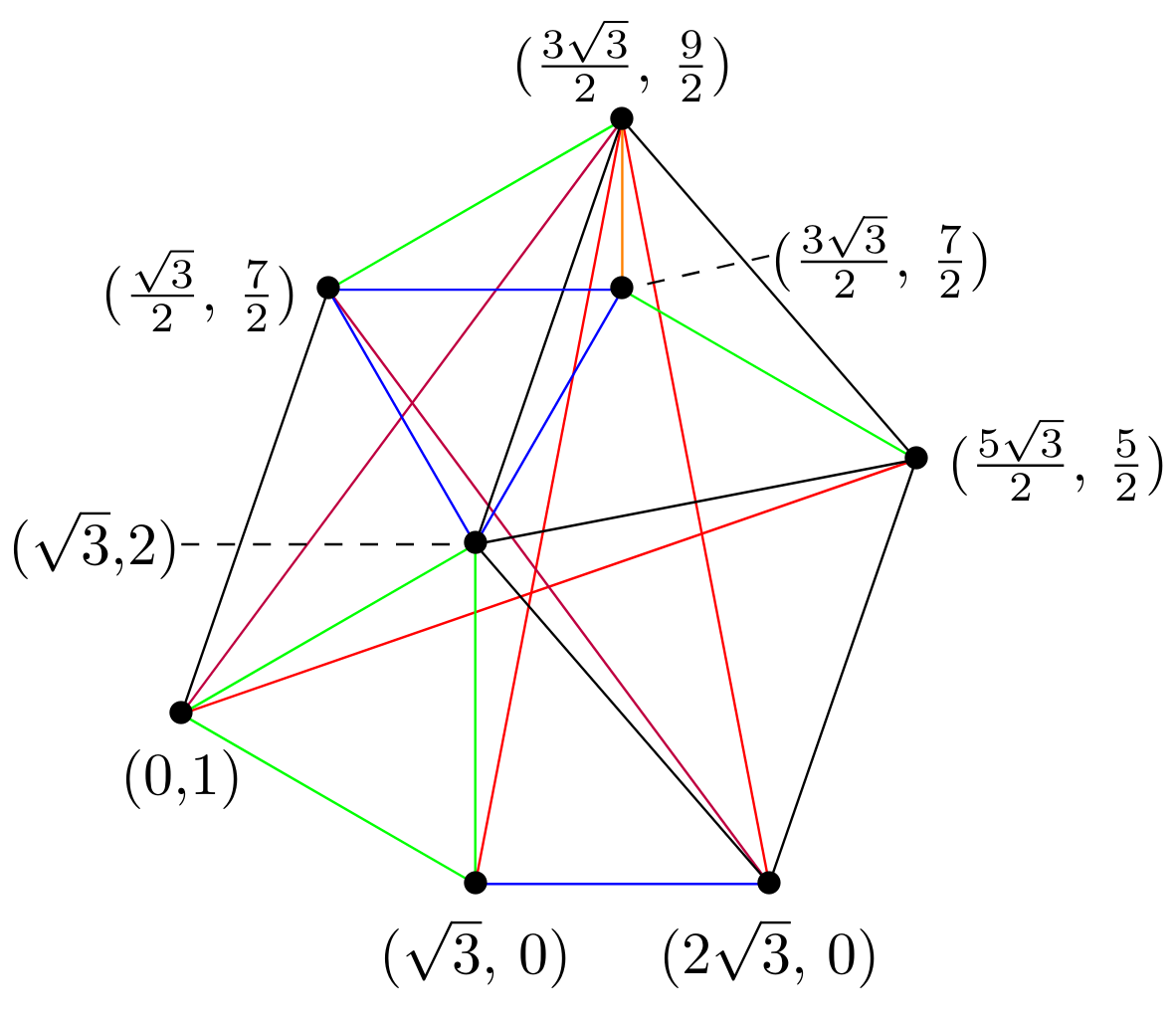}
    \caption{A crescent configuration of size 8 due to Pal\' asti \cite{Pal89}.}
    \label{fig:palasti8}
\end{figure}

The question as to whether crescent configurations of size $n$ exist remains open for $n \geq 9 $. Motivated by the observation that Pal\' asti's constructions lie on a triangular lattice, Burt et al. \cite{BGMMPS} exhaustively searched a 91 point triangular lattice and showed that it does not contain a crescent configuration of size 9. By contrast, Pal\' asti's crescent configuration of size 8 is contained in a 37 point triangular lattice.

Often, studying a distance problem in a more general normed space can reveal additional structure of the problem. A first example is the Erd\H{o}s distinct distances problem. Recall that the Erd\H{o}s distinct distances problem asks for the minimum number of distinct distances determined by $n$ points in the plane. The current best lower bound is $\Omega(n / \log n)$ by Guth and Katz in 2015 \cite{GK}, which in particular improves upon the lower bound $\Omega(n^{1/2})$ by Erd\H{o}s in 1946 \cite{Erd46} and the lower bound $\Omega(n^{4/5})$ by Sz\' ekely in 1993 \cite{Sz}. Garibaldi \cite{Ga} provided conditions for general norms in $\R^2$ to satisfy these weaker $\Omega(n^{1/2})$ and $\Omega(n^{4/5})$ bounds, leading to a deeper understanding of the techniques used in their proofs. 

A second example is the unit distances problem, first posed by Erd\H{o}s \cite{Erd46} in 1946. The original unit distances problem asks for the maximum number of distances of unit length determined by $n$ points in the plane in the Euclidean norm. It can be generalized to arbitrary norms in $\R^2$ as follows. Let $u_{||\cdot||}(n)$ denote the maximum number of distances of unit length determined by $n$ points in the plane in the norm $||\cdot||$. Brass \cite{Br} proved that if $|| \cdot ||$ is not strictly convex (i.e., the unit circle of $||\cdot||$ contains a line segment), then $u_{||\cdot||}(n) = \Theta(n^2)$. By contrast, Valtr \cite{Va} proved that if $||\cdot||$ is strictly convex, then $u_{||\cdot||}(n) = O(n^{4/3} )$. Interestingly, this upper bound $u_{||\cdot||}(n) = O(n^{4/3})$ cannot be improved without taking into account the geometry of a strictly convex norm $||\cdot||$. Valtr \cite{Va} constructed a strictly convex norm $||\cdot||$ for which $u_{||\cdot||}(n) = \Theta(n^{4/3})$. Moreover, there exist norms for which $u_{||\cdot||}(n) = o(n^{4/3})$. Matou\v{s}ek \cite{Ma} proved that ``almost every'' strictly convex norm $||\cdot||$ satisfies $u_{||\cdot||}(n) = O(n \log{n} \log{\log{n}} )$. 

Previously, crescent configurations have only been studied in the Euclidean setting. We extend the problem of crescent configurations to general normed spaces $(\R^2, ||\cdot|| )$. 


\subsection{Overview of results}

In Section \ref{section2}, we define \textit{strong crescent configurations}, a generalization of crescent configurations to normed spaces $(\R^2, ||\cdot||$). To do this, we introduce the concept of \textit{line-like configurations} and \textit{strong general position} in $||\cdot||$.

In Section \ref{section3}, we construct infinitely many line-like configurations of arbitrary size under a broad class of norms. We say that a set of $n$ points forms a \textit{line-like configuration in $||\cdot||$} if its distance graph, measured in $||\cdot||$, is isomorphic to the distance graph of $n$ equally spaced points on a line. 

\begin{restatable}{thm}{linelinelike}
\label{thm:linelinelike}
Let $||\cdot||$ be a norm which is not strictly convex. Then for each $n$, there exist infinitely many (after scaling and translating) line-like configurations of size $n$ in $||\cdot||$.
\end{restatable}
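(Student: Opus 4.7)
The plan is to exploit the failure of strict convexity to extract two linearly independent unit vectors $u, v$ on whose positive cone $||\cdot||$ is additive, and then build a continuum of $n$-point configurations $p_k = a_k u + b_k v$ whose pairwise $||\cdot||$-distances mirror those of equally spaced collinear points.

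First I would translate the hypothesis into usable form. Since $||\cdot||$ is not strictly convex, its unit sphere contains a non-degenerate line segment; pick distinct unit vectors $u, v$ on that segment, so $||(1-t)u + tv|| = 1$ for all $t \in [0, 1]$. These vectors are linearly independent, for otherwise $u \neq v$ and both being unit would force $v = -u$, making the midpoint $0$ of the segment a unit vector, which is absurd. Applying positive homogeneity with $t = b/(a+b)$ yields the key identity
\[
||au + bv|| \;=\; a + b \qquad (a, b \ge 0),
\]
which is the sole analytic input of the proof.

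Next, fix $n$ and construct the family. Choose any nondecreasing sequence $0 = a_0 \le a_1 \le \cdots \le a_{n-1}$ with increments $a_k - a_{k-1} \in [0, 1]$, set $b_k := k - a_k$ (which is then automatically nondecreasing and nonnegative), and define $p_k := a_k u + b_k v$. Strict monotonicity of $a_k + b_k = k$ together with linear independence of $u, v$ ensures the $p_k$ are distinct. For $i < j$, the difference $p_j - p_i = (a_j - a_i) u + (b_j - b_i) v$ has both coefficients nonnegative, so the identity above gives
\[
||p_j - p_i|| \;=\; (a_j - a_i) + (b_j - b_i) \;=\; j - i,
\]
matching the distance pattern of $n$ equally spaced collinear points; hence $\{p_k\}$ is line-like in $||\cdot||$.

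Finally, to count equivalence classes under scaling and translation: normalizing by translation forces $p_0 = 0$ and hence $a_0 = 0$, while a scaling by $\lambda > 0$ would change the spacing from $1$ to $\lambda$ and re-normalizing to spacing $1$ returns the original sequence. Hence distinct admissible sequences $(a_1, \ldots, a_{n-1})$ produce distinct equivalence classes, and the admissible set is the $(n-1)$-dimensional region $\{(a_1, \ldots, a_{n-1}) : 0 \le a_k - a_{k-1} \le 1\}$ of positive volume, yielding infinitely many inequivalent line-like configurations. The only step of genuine content is the first---extracting the additive identity on the positive cone of $u, v$ from the failure of strict convexity---after which the rest is direct verification.
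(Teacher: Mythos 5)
Your proof is correct and follows essentially the same route as the paper: the paper also builds the configuration by taking each step $P_{i+1}-P_i$ to be a unit vector on the flat segment of the unit circle, which is exactly your family $p_k = a_k u + b_k v$ with unit steps that are convex combinations of the segment's endpoints. The only difference is presentational—you make explicit the additivity identity $\lVert au+bv\rVert = a+b$ for $a,b \ge 0$ (and the resulting distance computation and counting of equivalence classes) that the paper asserts without detail.
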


\begin{restatable}{thm}{circlelinelike}
\label{thm:circlelinelike}
Let $||\cdot||$ be a norm whose unit circle contains an arc contained in an $L^2$ circle centered at the origin. Then for each $n$, there exist infinitely many (after scaling and translating) line-like configurations of size $n$ in $||\cdot||$.
\end{restatable}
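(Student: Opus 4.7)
The plan is to place $n$ equally-angularly-spaced points on a small Euclidean circular arc, oriented so that every pairwise chord direction lies in the part of the plane where $\|\cdot\|$ is proportional to $|\cdot|_2$. By hypothesis, there is an arc of the unit $\|\cdot\|$-circle contained in a Euclidean circle $\{x : |x|_2 = r\}$ for some $r > 0$. Positive homogeneity of $\|\cdot\|$ then forces $\|x\| = |x|_2/r$ on the entire open cone $C$ spanned by this arc, and I will arrange for all pairwise differences of my configuration to live inside $C$.

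Concretely, I will fix a unit vector $u$ in the interior of $C$, choose $\theta_0$ so that the tangent direction $(-\sin\theta_0, \cos\theta_0)$ equals $u$, and for parameters $c \in \R^2$, $R > 0$, and small $\beta > 0$, set
\[
 p_k \ = \ c + R\bigl(\cos(\theta_0 + k\beta),\ \sin(\theta_0 + k\beta)\bigr), \qquad k = 0, 1, \ldots, n-1.
\]
Standard sum-to-product identities then yield
\[
 p_j - p_i \ = \ 2R\sin\!\bigl((j-i)\beta/2\bigr)\,\bigl(-\sin(\theta_0 + (i+j)\beta/2),\ \cos(\theta_0 + (i+j)\beta/2)\bigr),
\]
which factors each difference into a magnitude depending only on $|j-i|$ times a unit direction lying arbitrarily close to $u$ once $\beta$ is small.

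A brief continuity argument then guarantees that for $\beta$ sufficiently small, each of the finitely many direction vectors above sits in the open cone $C$. Hence $\|p_j - p_i\| = (2R/r)\sin((j-i)\beta/2)$, and by further restricting to $\beta < \pi/(n-1)$ these $n-1$ values are strictly increasing in $|j-i|$. The resulting multiplicity profile $n-1, n-2, \ldots, 1$ matches that of $n$ equally spaced points on a line, so the configuration is line-like in $\|\cdot\|$.

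Finally, to produce infinitely many inequivalent configurations after scaling and translating, I will vary $\beta$ in the admissible interval. The ratio $\sin((n-1)\beta/2)/\sin(\beta/2)$ of largest to smallest pairwise distance is invariant under scaling and translation and is a non-constant function of $\beta$, so distinct values of $\beta$ yield inequivalent configurations. The only real obstacle is the direction-control step, but since $C$ is open and the midpoint-tangent directions converge uniformly to $u$ as $\beta \to 0$, this is a soft continuity argument rather than a delicate calculation.
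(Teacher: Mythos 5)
Your proposal is correct and takes essentially the same approach as the paper: the paper likewise places $n$ equally spaced points on an $L^2$ circle arranged so that every pairwise chord direction $t(P_i,P_j)$ stays inside the angular sector $[\theta_1,\theta_2]$ of the given arc, where $||\cdot||$ is a positive multiple of the Euclidean norm. Your explicit sum-to-product factorization simply makes transparent the direction-control step that the paper handles via an iterative construction and an ``angle chasing'' argument left to the reader, and your scale-invariant distance-ratio argument is a more careful justification of the infinitude claim than the paper's remark that there were infinitely many choices.
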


Let $||\cdot||$ be a norm which does not satisfy the conditions from Theorem \ref{thm:linelinelike} or Theorem \ref{thm:circlelinelike}. For all $n \geq 5$, we conjecture that the only line-like configurations of size $n$ in $||\cdot||$ are equally spaced points on a line (cf. Section \ref{section:uglylinelike}). 

In Section \ref{section4}, we prove a crescent-type result about crescent line-like configurations in the $L^\infty$ norm. We say a line-like configuration is a \textit{line-like crescent configuration} if no three points lie on a common line and no four points lie on a common $||\cdot||$ circle. We say that $P_1, \dots, P_n$ is a \textit{perpendicular perturbation} of a line $\ell$ if there exist equally spaced points $Q_1, \dots, Q_n$ on $\ell$ so that the lines $\overleftrightarrow{P_i Q_i} \perp \ell$ for all $1 \leq i \leq n$. 

\begin{defi}
Let $a = (a_x, a_y)$ and $b = (b_x, b_y)$ be two points in the plane. The \bm{$L^\infty$} \textbf{ norm} (Chebyshev norm) is defined by
\[ || a - b ||_{L^\infty} := \max\{ |b_x - a_x|,\ |b_y - a_y| \}. \]
\end{defi}

\begin{restatable}{thm}{crescentlinfty}
\label{thm:crescentlinfty} 
Let $n \geq 7$. Then every line-like crescent configuration in $L^\infty$ of size $n$ is a perpendicular perturbation of a horizontal or vertical line. 
\end{restatable}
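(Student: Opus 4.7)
The plan is to exploit the polyhedral shape of the $L^\infty$ unit ball to force one coordinate of the points to be equally spaced; once this is done, the perpendicular perturbation structure is immediate.

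First I would normalize the common distance in the line-like configuration to $1$, so that the distance-graph isomorphism supplies a labeling $P_1,\dots,P_n$ with $\|P_i-P_j\|_{\infty}=|i-j|$ for every pair. Writing $P_i=(u_i,v_i)$, this identity becomes $\max(|u_i-u_j|,\,|v_i-v_j|)=|i-j|$ for all $i,j$. Applied to the pair $(P_1,P_n)$, it forces at least one of $|u_n-u_1|$, $|v_n-v_1|$ to equal $n-1$. By the $x$--$y$ symmetry of the $L^\infty$ norm I assume the former, and after possibly reflecting in the $y$-axis I assume $u_n-u_1=n-1$.

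Next I would run a telescoping argument on the $x$-coordinates. For any intermediate $i$,
\[
n-1 \;=\; u_n-u_1 \;=\;(u_n-u_i)+(u_i-u_1)\;\le\;|u_n-u_i|+|u_i-u_1|\;\le\;(n-i)+(i-1)\;=\;n-1,
\]
so equality holds throughout; both signed terms are nonnegative and saturate their bounds, giving $u_i=u_1+(i-1)$. Thus the $x$-coordinates are equally spaced, and taking $Q_i:=(u_1+(i-1),0)$ exhibits $P_1,\dots,P_n$ as a perpendicular perturbation of the horizontal line $y=0$; the 1-Lipschitz bound $|v_i-v_j|\le|i-j|$ required of the vertical perturbation is automatic from the line-like identity.

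Finally, I would dispose of the degenerate case where $|v_n-v_1|=n-1$ also holds. Running the same telescoping on the $v$-coordinate yields $v_i-v_1=\pm(i-1)$, so every $P_i$ lies on a common line of slope $\pm1$; for $n\ge3$ this puts three points on a line, contradicting the crescent general-position hypothesis. Hence only one direction dominates, and we land on a perpendicular perturbation of a horizontal (or, symmetrically, vertical) line. The main obstacle I anticipate is reconciling the stated hypotheses $n\ge7$ and ``no four on a common $L^\infty$ circle'' with the argument, since neither is invoked above; the authors may use these hypotheses in a more subtle way (for instance to avoid fixing a canonical labeling up front), but the telescoping via the extreme pair $(P_1,P_n)$ appears already to give the conclusion for all $n\ge3$.
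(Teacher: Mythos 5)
Your proof rests on a misreading of the definition of a line-like configuration. The identity you take as your starting point, $\|P_i-P_j\|_\infty=|i-j|$ (after scaling $d_1=1$), is not part of the hypothesis: Definition \ref{def:general_line_like} only requires the distance graph to be isomorphic to that of equally spaced collinear points, and the isomorphism in question preserves only the \emph{pattern of equalities} among distances, not their values. So all you may assume is $d(P_i,P_j)=d_{|i-j|}$ for some distinct values $d_1,\dots,d_{n-1}$; the triangle inequality gives $d_k\le k\,d_1$, and strict inequality genuinely occurs in $L^\infty$. Concretely, the size-$6$ set $\{(0,0),\,(1,a),\,(1+b,1+a),\,(2+b,1+a+b),\,(2+2b,2+a+b),\,(3+2b,2+2a+b)\}$ with $0<a<b<1$ from Example \ref{ex:counterexample_crescentlinfty} is a line-like crescent configuration with $d_1=1$ but $d_2=1+b<2$; it is not a perpendicular perturbation, so it falsifies the ``holds for all $n\ge 3$'' conclusion your telescoping argument reaches. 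The worry you flag at the end --- that neither $n\ge 7$ nor the no-four-on-a-common-$L^\infty$-circle condition is ever used --- is precisely the symptom: a correct proof cannot avoid them, because the theorem is false for $3\le n\le 6$.

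Read correctly, your telescoping covers only the special regime in which $d_k=k$ for every $k$, and that is the conclusion, not the hypothesis: it is far from automatic even when $d_2=2$. The paper's actual proof scales $d_1=1$, splits on $d_2<2$ versus $d_2=2$, encodes each consecutive step $P_iP_{i+1}$ by a type ($x$, $y$, their reverses, and the four diagonal types $b_{xy}$, $b_{x'y}$, $b_{x'y'}$, $b_{xy'}$), and through Lemmas \ref{lemma:bad_BB}--\ref{lemma:bad_xBbx'y} eliminates, for $n\ge 7$, every type pattern except those built from $x$, $b_{xy}$, $b_{xy'}$ (up to the symmetries of $L^\infty$). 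In that surviving case each step has horizontal increment exactly $1$, the $x$-coordinates are equally spaced, and the configuration is a perpendicular perturbation of a horizontal line --- your intended endgame, but reached only after the general-position hypotheses (no three collinear, no four on a common $L^\infty$ circle) are used repeatedly inside the lemmas to kill the $xyxy\cdots$-type configurations, which are realizable exactly up to $n=6$. To salvage your approach you would need to first establish $d_k=k\,d_1$ for all $k$, and that is exactly what fails below $n=7$ and what the type analysis exists to secure above it.
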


For $n \leq 6$, there exist line-like crescent configurations in $L^\infty$ of size $n$ which are not perpendicular perturbations (cf. Example \ref{ex:counterexample_crescentlinfty}).

In Section \ref{section5} we provide explicit constructions of strong crescent configurations. In every norm, we construct a strong crescent configuration of size four. 

\begin{restatable}{thm}{crescentgeneral}
\label{thm:crescentgeneral}
Let $||\cdot||$ be any norm. Then there exists a strong crescent configuration of size 4 in $||\cdot||$.
\end{restatable}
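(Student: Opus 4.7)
The plan is to build the configuration around an equilateral triangle in the norm, so that three of the six pairwise distances are equal, and then to place a fourth point on a continuous one-parameter family so that the remaining three distances split as $\{s,s,t\}$ with $s$ and $t$ chosen to avoid all degeneracies.

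First I would show that in any norm $||\cdot||$ on $\R^2$ there exists an equilateral triangle $A,B,C$ with $||A-B|| = ||B-C|| = ||A-C|| = 1$. Fix $A,B$ with $||A-B||=1$. The translated unit circles $\{P : ||P-A||=1\}$ and $\{P : ||P-B||=1\}$ are congruent centrally symmetric convex simple closed curves, each passing through the other's center; since $\tfrac{1}{2}(A+B)$ lies in the interior of both unit balls, the two circles must cross in at least two points, one on each side of the line $\overline{AB}$. Take $C$ to be one such intersection; the three distances $||A-B||,\,||A-C||,\,||B-C||$ are all equal to $1$, supplying the multiplicity-$3$ value in the target distance multiset.

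Next I would locate the fourth point $D$ on the locus $L = \{D : ||D-A|| = ||D-B||\}$, a non-empty unbounded closed set containing $C$. Along $L$ the common value $s(D) = ||D-A|| = ||D-B||$ is continuous, equals $1$ at $C$, and tends to infinity as $D$ escapes to infinity, so $s$ sweeps out a nontrivial interval. Similarly, the third distance $t(D) = ||D-C||$ is continuous in $D$ and is not identically equal to $s$ or to $1$ along $L$. Hence the three forbidden sets $\{s=1\}$, $\{t=1\}$, $\{s=t\}$ are each nowhere dense in $L$, and any $D\in L$ in their complement gives a four-point set whose pairwise distance multiset is $\{1,1,1,s,s,t\}$, realizing the required multiplicity profile $3,2,1$.

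Finally I would verify strong general position. The ``no three collinear'' condition excludes finitely many lines from the locus of admissible $D$. The ``no four concyclic in $||\cdot||$'' condition is a single codimension-one constraint. The ``no line-like $3$-point subset'' condition reduces to forbidding finitely many equalities, such as $s\in\{\tfrac12,2\}$, $t\in\{\tfrac12,2\}$, $t=2s$, or $s=2t$, that would make a sub-triple's distance multiset take the line-like form $\{d,d,2d\}$. Each failure mode is zero-dimensional inside the one-dimensional continuum $L$, so the set of admissible $D$ is a nonempty open subset. The step I expect to be the main obstacle is the case of a non-strictly-convex norm, where $L$ can contain flat segments on which $s$ is locally constant and $\{t=s\}$ might fail to be nowhere dense. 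To handle this I would first perturb $A,B,C$ slightly so that $C$ lies at a strictly convex boundary point of both translated unit circles, forcing $L$ to be a genuine one-dimensional curve near $C$ and restoring the transversality needed for the genericity arguments.
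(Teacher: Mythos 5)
Your architecture---an equilateral triangle $ABC$ carrying the multiplicity-3 distance, plus a fourth point $D$ on the bisector of $A,B$ giving the split $\{s,s,t\}$---is genuinely different from the paper's construction, and could in principle work, but as written it has a real gap exactly where you yourself predicted trouble. Your fix for non-strictly-convex norms is to ``perturb $A,B,C$ slightly so that $C$ lies at a strictly convex boundary point of both translated unit circles,'' but when the unit circle is a polygon ($L^1$, $L^\infty$, or any norm whose circle is a union of segments) there are \emph{no} strictly convex boundary points at all: every point of the circle lies on a flat segment, so no perturbation of $A,B,C$ can produce one. And these are precisely the norms where the obstacle is real: the bisector $L = \{D : ||D-A|| = ||D-B||\}$ can contain two-dimensional regions (e.g., for $A=(0,0)$, $B=(1,0)$ in $L^\infty$, every $D=(x,y)$ with $|y| \geq \max\{|x|,|x-1|\}$ satisfies $||D-A|| = ||D-B|| = |y|$), so the ``nowhere dense bad sets in a one-dimensional continuum'' framework does not apply and cannot be restored by perturbation. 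The construction can be rescued on such regions by direct computation, but that is a different argument from the one you give. The paper sidesteps all of this by splitting into two explicit cases up front---unit circle not a union of segments, versus a polygonal circle where the middle point is placed at a corner---and by using the star pattern $|DA| = |DB| = |DC|$ for the multiplicity-3 distance, so that no genericity or transversality argument is ever needed. Relatedly, your assertion that the no-four-concyclic condition is ``a single codimension-one constraint'' also needs care in non-strictly-convex norms, where circles through three points need not be unique; the bad set is the union of \emph{all} circles through $A,B,C$.

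A second, conceptual error: your treatment of strong general position condition (3) misreads the definition. Only line-like configurations of size \emph{four} are forbidden; isosceles triples are not (both your configuration, with $||DA|| = ||DB||$, and the paper's contain them unavoidably, so if $3$-point line-like subsets were banned no construction could exist). Moreover, line-likeness depends only on the pattern of \emph{equalities} among distances---the distance-graph isomorphism preserves equality of distances, not ratios---so your conditions $s \in \{\tfrac12, 2\}$, $t = 2s$, and so on are irrelevant. What actually needs checking, and what you never verify, is that the quadruple itself is not line-like. Fortunately this is automatic in your construction: in a line-like quadruple $[p_1,p_2,p_3,p_4]$ the triply-occurring distance sits on the path $p_1p_2$, $p_2p_3$, $p_3p_4$, whereas in your configuration the triply-occurring distance $1$ sits on the triangle $ABC$, and a triangle is not a path, so no bijection can match the equality patterns. (The paper's star $K_{1,3}$ pattern plays the same role.) With that observation substituted for your ratio conditions, and an honest treatment of the polygonal case replacing the impossible perturbation, your route would go through.
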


We also construct larger strong crescent configurations in the $L^2$, $L^1$,and $L^\infty$ norms. The constructions were found by using a computer program to search a lattice, a technique previously employed by Pal\' asti in \cite{Pal89}. 

\begin{defi}
Let $a = (a_x, a_y)$ and $b = (b_x, b_y)$ be two points in the plane. The \bm{$L^1$} \textbf{ norm} (taxicab norm) is defined by
\[ || a - b ||_{L^1} := |b_y - a_y| + |b_x - a_x|. \]
The \bm{$L^2$}\textbf{ norm} (Euclidean norm) is defined by 
\[ || a - b ||_{L^2} := \sqrt{ (b_y - a_y)^2 + (b_x - a_x)^2 }.\]
\end{defi}

First, we provide constructions of strong crescent configurations in $L^2$. The set of strong crescent configurations in $L^2$ (from Definition \ref{def:generalcrescent}) is a subset of the set of crescent configurations (from Definition \ref{def:euclcrescent}). Crescent configurations of size $n \leq 8$ have been constructed in \cite{Liu, Pal87, Pal89, Erd89}. However, none of these constructions of sizes $n = 6,7,8$ is strong. We provide a construction of a strong crescent configuration in $L^2$ of size 6. 

\begin{restatable}{thm}{ltwocrescent}
In the $L^2$ norm, there exist strong crescent configurations of size $n \leq 6$. 
\end{restatable}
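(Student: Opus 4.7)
The plan is to handle the cases $n \le 6$ incrementally, leveraging Theorem \ref{thm:crescentgeneral} at the low end and then combining the classical constructions of Liu, Pal\'asti, and Pomerance with a computer-aided lattice search at the high end. For $n = 2, 3$ the statement is essentially vacuous (any two points, or any non-collinear non-equilateral triangle, works), and for $n = 4$ the existence of a strong crescent configuration in $L^2$ follows immediately from Theorem \ref{thm:crescentgeneral} applied with $||\cdot|| = ||\cdot||_{L^2}$. So the content of the theorem really lies in the cases $n = 5$ and $n = 6$.

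For $n = 5$, I would first check directly whether one of the known $5$-point Euclidean crescent configurations in \cite{Liu, Pal87, Pal89} already satisfies the extra conditions imposed by Definition \ref{def:generalcrescent} (strong general position and the absence of a forbidden line-like sub-configuration as defined in Section \ref{section2}). The verification reduces to a finite check: list all pairwise distances, confirm the multiplicity profile $(1,2,3,4)$, confirm no three collinear and no four concyclic, and then confirm that no sub-configuration has a distance graph isomorphic to that of equally spaced points on a line. If one of the classical configurations passes all four checks, we are done for $n = 5$; otherwise we produce a new one by the lattice search described below.

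For $n = 6$, the existing constructions fail the strong condition, so a new configuration must be exhibited. Following Pal\'asti's technique in \cite{Pal89}, I would run an exhaustive computer search over a finite patch of the triangular lattice (say the $37$-point patch that already contains Pal\'asti's size-$8$ construction, and larger patches if needed), enumerating $6$-element subsets and retaining those that: (i) realize the multiplicity profile $(1,2,3,4,5)$ in $L^2$, (ii) satisfy general position (no three collinear, no four concyclic), and (iii) satisfy the strong general position requirement introduced in Section \ref{section2}, equivalently that the configuration contains no line-like sub-configuration of the forbidden size in $L^2$. Since distances on the triangular lattice take the form $\sqrt{a^2 + ab + b^2}$, collinearity, concyclicity, and distance multiplicities can all be checked in exact integer arithmetic, so the search is rigorous. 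The output would be a single explicit $6$-point configuration, which is then presented in the paper as the proof.

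The principal obstacle is the last condition: ruling out any line-like sub-configuration. Unlike collinearity or concyclicity, which are single algebraic constraints, the line-like condition involves a combinatorial isomorphism of distance graphs, and it is precisely the constraint that defeats the classical $n \ge 6$ constructions. To keep the search tractable, I would first filter candidates by the multiplicity profile and general position, and only then test the line-like condition on the (much smaller) surviving set. I expect that the density of valid configurations drops rapidly with $n$, so that $n = 6$ is near the boundary of what a modest lattice search can find; extending the same method to $n = 7, 8$ would require either much larger lattices or a different construction technique, which is presumably why the theorem is stated only up to $n = 6$.
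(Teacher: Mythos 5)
Your proposal matches the paper's proof essentially exactly: the paper handles $n \leq 5$ by citing the known constructions of Liu, Pal\'asti, and Pomerance (which, as you anticipated checking, are already strong), and produces an explicit size-$6$ configuration via a backtracking search of a triangular lattice, just as you outline. The only difference is that the paper exhibits the concrete witness $\left\{ \left(\tfrac{1}{2}, \tfrac{\sqrt{3}}{2}\right), \left(1, \sqrt{3}\right), \left(\tfrac{3}{2}, \tfrac{\sqrt{3}}{2}\right), \left(\tfrac{7}{2}, \tfrac{2\sqrt{3}}{2}\right), \left(\tfrac{9}{2}, \tfrac{3\sqrt{3}}{2}\right), \left(4, 0\right) \right\}$, whereas your write-up stops at the search plan.
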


Second, we provide constructions of strong crescent configurations in $L^1$ and $L^\infty$. We do so by first using a computer program to search a square lattice for strong crescent configurations in $L^\infty$. The constructions in $L^\infty$ immediately give rise to constructions in $L^1$, as there is a dual relationship between sets of points in $L^1$ and $L^\infty$. We chose to study those norms in particular because they are highly symmetric and easily computable. Given a lattice and a method to compute distances and circles in an arbitrary norm $||\cdot||$, our algorithm would similarly be able to search for strong crescent configurations in $||\cdot||$ in a lattice. 

\begin{restatable}{thm}{linftycrescent}
In the $L^\infty$ norm, there exist strong crescent configurations of sizes $n \leq 8$.  
\end{restatable}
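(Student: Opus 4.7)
The plan is to prove this existence claim by explicit construction, using the computer search strategy the authors indicate in the discussion above. For each $n \in \{1, 2, \ldots, 8\}$ the task reduces to producing an $n$-point subset of $\Z^2$ and verifying three kinds of constraints: the distance profile (for each $1 \le i \le n-1$, some $L^\infty$ distance is attained with multiplicity exactly $i$), general position (no three collinear and no four on a common $L^\infty$ circle, equivalently on the boundary of a common axis-aligned square), and any extra conditions imposed by Definition \ref{def:generalcrescent} for the \emph{strong} variant. Each of these constraints is finitely and exactly checkable using integer arithmetic once the points are chosen in a square lattice, so the verification step is entirely routine.

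To find such configurations I would run a backtracking search over a bounded lattice $\{0, 1, \ldots, N\}^2$. By translating so that the lexicographically smallest point sits at the origin and by quotienting by the dihedral symmetries of the lattice, I would cut the search space by a factor of roughly $8 N^2$. The search builds the configuration one point at a time and prunes as soon as (i) the partial distance multiset is inconsistent with the target crescent profile, for instance because some distance already occurs more than $n-1$ times or because too few remaining pairs can supply the required multiplicities; (ii) a triple of selected points is collinear; or (iii) a quadruple lies on a common $L^\infty$ circle. Test (iii) can be maintained incrementally via an auxiliary table recording, for every pair of selected points, the one-parameter family of axis-aligned squares passing through them, so that adding a new point requires only checking intersection with this table.

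The main obstacle is the size of the search space rather than any mathematical subtlety: naive enumeration of $\binom{N^2}{8}$ candidates is infeasible even for moderate $N$, so success depends on tight pruning from the distance-multiplicity profile and on choosing $N$ large enough to admit a solution. I would begin with $N$ around $10$ and enlarge the lattice until all eight sizes are realized; the fact that Pal\'asti's $L^2$ construction for $n = 8$ fits into a $37$-point triangular lattice suggests that the required lattice in $L^\infty$ is also modest. The hardest case will be $n = 8$, where the crescent constraints are most restrictive and (as noted by the authors for $L^2$) some of the ordinary-crescent configurations fail the strong condition; the proof concludes with the list of eight explicit configurations together with their pairwise distance tables, from which the reader can directly verify the distance profile, the general-position conditions, and the strong-crescent requirement.
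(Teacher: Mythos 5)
Your approach is precisely the paper's: the theorem is established by a backtracking search over a bounded square lattice, pruning on the crescent distance profile, collinear triples, $L^\infty$-concyclic quadruples, and the strong-position condition, with the proof ultimately consisting of explicit integer configurations that the reader verifies by finite arithmetic. The genuine gap is that your writeup contains the protocol but not the witnesses: for an existence statement proved by construction, the explicit point sets \emph{are} the proof, and nothing in the method guarantees that the search terminates successfully --- indeed the same style of algorithm run exhaustively on a $9 \times 9$ lattice finds no strong crescent configuration of size $9$ (Remark \ref{rem:linftysearch}), so success for $n \leq 8$ is an empirical output of the computation, not a consequence of the search design, and ``enlarge the lattice until all eight sizes are realized'' presupposes exactly what must be exhibited. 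The paper closes this by listing concrete coordinates, e.g.\ $\{(0,0),(0,1),(1,1),(1,3)\}$ for $n = 4$ up through $\{(0,0),(0,6),(1,3),(2,4),(3,2),(4,1),(5,5),(6,7)\}$ for $n = 8$; all fit inside a $\{0,\dots,6\} \times \{0,\dots,7\}$ box, confirming your expectation that a modest lattice suffices.

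One condition should be made operational rather than implicit: your pruning tests (i)--(iii) cover the distance profile, collinearity, and cocircularity, but strong general position (Definition \ref{def:generalgeneralposition}) additionally forbids line-like configurations of size four, and in $L^\infty$ this is \emph{not} subsumed by the cocircularity test (in $L^2$ a non-parallelogram line-like quadruple is automatically concyclic, cf.\ Remark \ref{rem:l2stronggeneral}, but no such reduction is available here). Concretely, for each $4$-subset and each ordering $p_1, p_2, p_3, p_4$ up to reversal, the search must reject the equality pattern $d(p_1,p_2) = d(p_2,p_3) = d(p_3,p_4)$ and $d(p_1,p_3) = d(p_2,p_4)$ with the three resulting distance values pairwise distinct. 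You do gesture at ``any extra conditions imposed by Definition \ref{def:generalcrescent}'', which is right in spirit, but this is the check that distinguishes strong crescent configurations from ordinary ones, so it needs to appear explicitly in the verification.
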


\begin{cor}\label{lonecrescent}
In the $L^1$ norm, there exist strong crescent configurations of sizes $n \leq 8$. 
\end{cor}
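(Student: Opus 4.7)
The plan is to deduce Corollary \ref{lonecrescent} from the preceding $L^\infty$ theorem by exploiting the linear isometry between $(\R^2, L^1)$ and $(\R^2, L^\infty)$, which is exactly the ``dual relationship'' alluded to just before the statement. I would define
\[ T \colon \R^2 \to \R^2, \qquad T(x,y) \ := \ (x+y,\, x-y), \]
and verify by cases on signs that
\[ \|T(x,y)\|_{L^\infty} \ = \ \max(|x+y|,|x-y|) \ = \ |x| + |y| \ = \ \|(x,y)\|_{L^1}. \]
Thus $T$ is a linear bijection from $(\R^2, L^1)$ onto $(\R^2, L^\infty)$ that preserves distances exactly, with inverse $T^{-1}(u,v) = \bigl( (u+v)/2,\ (u-v)/2 \bigr)$, which is itself a linear isometry from $(\R^2, L^\infty)$ onto $(\R^2, L^1)$.

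Next, starting from any $L^\infty$ strong crescent configuration $\{P_1,\dots,P_n\}$ guaranteed by the previous theorem, I would set $P_i' := T^{-1}(P_i)$ and check that $\{P_1',\dots,P_n'\}$ is a strong crescent configuration in $L^1$. Because $T^{-1}$ is an isometry, the multiset of pairwise $L^1$ distances among the $P_i'$ coincides with the multiset of pairwise $L^\infty$ distances among the $P_i$, so condition (2) of the crescent definition (the prescribed multiplicities $1,2,\dots,n-1$) transfers unchanged. Because $T^{-1}$ is linear, collinearity is preserved, giving ``no three collinear.'' Finally, an $L^\infty$ circle $\{p : \|p - c\|_{L^\infty} = r\}$ maps under $T^{-1}$ to $\{q : \|q - T^{-1}(c)\|_{L^1} = r\}$, which is precisely an $L^1$ circle; hence ``no four concircular in $L^\infty$'' translates to ``no four concircular in $L^1$.''

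The one step that requires slightly more care is ensuring that the additional hypotheses packaged into the word \emph{strong} (strong general position, and the constraint involving line-like configurations) are themselves invariant under the isometry $T^{-1}$. This is where I expect the main, though still routine, obstacle to lie: one must check that each such condition is formulated purely in terms of the distance graph and the families of lines and $\|\cdot\|$-circles in the ambient norm — all of which $T^{-1}$ carries faithfully from $L^\infty$ to $L^1$. Once this invariance is recorded, the corollary follows immediately by applying $T^{-1}$ to each of the $L^\infty$ configurations of sizes $n \leq 8$ produced by the previous theorem.
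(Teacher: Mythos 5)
Your proposal is correct and takes essentially the same route as the paper: the paper also deduces the corollary from the $L^\infty$ constructions via the linear isometry of Lemma \ref{lemma:duality} (which it describes as rotating each point by $45^\circ$, i.e.\ your map $T$ up to a harmless scaling), observing that this correspondence carries strong crescent configurations in $L^\infty$ to strong crescent configurations in $L^1$. Your explicit verification that collinearity, $\|\cdot\|$-circles, and the distance-graph (hence line-like) conditions all transfer under the isometry is precisely the invariance the paper leaves implicit.
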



\section{General position and crescent configurations in normed spaces}\label{section2}


\subsection{Preliminaries}

Throughout this paper, we study the vector space $\R^2$, equipped with an arbitrary norm $||\cdot||: \R^2 \to \R$. In this section, we recall properties of these normed spaces $(\R^2, ||\cdot||)$ that are used in our proofs. See \cite{MSW} for a comprehensive survey of the geometry of normed spaces. 

\begin{defi}
A \textbf{norm} on $\R^2$ is a function $||\cdot||: \R^2 \to \R$ satisfying the following three properties.
\begin{enumerate}
    \item For all $x \in \R^2$ we have $||x|| \geq 0$. Moreover, $||x|| = 0$ if and only if $x = 0$.
    \item For all $x \in \R^2$ and $\lambda \geq 0$ we have $||\lambda x || = \lambda ||x ||$.
    \item For all $x,y \in \R^2$ we have $||x + y|| \leq ||x|| + ||y||$.
\end{enumerate}
Each norm $|| \cdot||: \R^2 \to \R$ specifies a \textbf{distance function} (or metric)  $d_{||\cdot||} : \R^2 \to \R$, given by 
\[ d_{||\cdot||}(x,y) := ||x - y||\]
for all $(x,y) \in \R$.
\end{defi}

A norm on $\R^2$ is uniquely determined by specifying its unit ball $B$.

\begin{defi}
A \textbf{unit ball} on $\R^2$ is a set $B \subset \R^2$ satisfying the following properties: 
\begin{enumerate}
    \item $B$ is closed and bounded,
    \item $B$ has a non-empty interior,
    \item $B$ is centrally symmetric,
    \item $B$ is convex.
\end{enumerate}
The corresponding \textbf{unit circle} is the boundary $\partial B$. We denote the \textbf{circle of radius \bm{$r$} centered at \bm{$p$}} by $B_{||\cdot||}(p, r)$. 

\end{defi}

\begin{exa}
For $1 \leq p < \infty$, the \textbf{\bm{$L^p$} norm}, denoted $||\cdot||_p$, is defined by 
\[ || (x,y) ||_p := (|x|^p + |y|^p)^{1/p}, \]
for all $(x,y) \in \R^2$. The corresponding unit ball is 
\[ B = \{ (x,y),\ |x|^p + |y|^p \leq 1 \}. \]
For $p = \infty$, the \textbf{\bm{$L^\infty$} norm}, denoted $||\cdot||_{\infty}$, is defined by 
\[ || (x,y) ||_{\infty} := \max(|x|, |y|),\]
for all $(x,y) \in \R^2$. The corresponding unit ball is 
\[ B = \{ (x,y),\ |x|,|y| \leq 1 \}. \]
Euclidean distance is given by the $L^2$ norm. 
\end{exa}

Next we briefly discuss strict convexity. An in depth treatment can be found in \cite{MSW} (pg. 10--15). 

\begin{defi}\label{def:strictlyconvex}
Let $||\cdot||$ be a norm with unit ball $B$. The following are equivalent.
\begin{enumerate}
    \item For $x,y \in \R^2$, we have $||x + y|| = ||x|| + ||y||$ if and only if $x = \lambda y$ for some $\lambda \geq 0$. 
    \item The unit circle $\partial B$ does not contain a line segment. 
\end{enumerate}
A norm which satisfies these properties is said to be \textbf{strictly convex.}
\end{defi}

\begin{exa}$ $
\begin{enumerate}
    \item $L^p$ is strictly convex for $1 < p < \infty$.
    \item $L^1$ and $L^\infty$ are not strictly convex. 
\end{enumerate}
\end{exa}

We recall the following lemma about intersection points of circles in strictly convex norms. A proof can be found in \cite{MSW} (pg. 13--14). 

\begin{lemma}\label{lemma:twocircles}
Let $||\cdot||$ be a strictly convex norm. Then two circles $B_{||\cdot||}(p_1, r_1)$ and $B_{||\cdot||}(p_2, r_2)$ with $p_1\neq p_2$ intersect in at most two points. 
\end{lemma}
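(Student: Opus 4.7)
Plan. I would argue by contradiction, supposing three distinct points $x, y, z$ lie on both $\partial B_{||\cdot||}(p_1, r_1)$ and $\partial B_{||\cdot||}(p_2, r_2)$, so $||x - p_i|| = ||y - p_i|| = ||z - p_i|| = r_i$ for $i = 1, 2$.

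First I would rule out that $x, y, z$ are collinear. If $y$ lies strictly between $x$ and $z$, write $y = \lambda x + (1-\lambda) z$ with $\lambda \in (0,1)$. Then $y - p_i = \lambda(x - p_i) + (1-\lambda)(z - p_i)$, so the triangle inequality gives $||y - p_i|| \leq \lambda r_i + (1-\lambda)r_i = r_i$. Since $y$ is on the circle, equality holds, and by strict convexity (Definition \ref{def:strictlyconvex}) the vectors $x - p_i$ and $z - p_i$ must be non-negative scalar multiples of a common vector. Combined with $||x - p_i|| = ||z - p_i|| = r_i$, this forces $x = z$, contradicting distinctness. Hence $x, y, z$ form a non-degenerate triangle.

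Next, I would exploit the homothetic structure of the two circles. The balls $K_i := B_{||\cdot||}(p_i, r_i) = p_i + r_i B$ are strictly convex bodies and are positive homothets of one another, with $\partial K_1 \cap \partial K_2 \supseteq \{x, y, z\}$. I would finish by invoking the standard Minkowski-plane fact (treated, for example, in \cite{MSW}) that two distinct positively homothetic strictly convex planar bodies have boundaries meeting in at most two points, yielding the contradiction.

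To prove that auxiliary fact directly, my approach is to parameterize the line $\ell$ through the distinct centers $p_1, p_2$ by $\gamma(t) = (1-t) p_1 + t p_2$ and observe that for each $w \notin \ell$ the map $t \mapsto ||\gamma(t) - w||$ is strictly convex on $\R$: equality in $||(1-s)u + sv|| \leq (1-s)||u|| + s||v||$ would force $u, v$ to be non-negative multiples of a common vector and hence place $w$ on $\ell$. Since $x, y, z$ are non-collinear, at least two of them, say $x$ and $y$, lie off $\ell$, and then $\phi(t) := ||\gamma(t) - x||$ and $\psi(t) := ||\gamma(t) - y||$ are strictly convex with $\phi(0) = \psi(0) = r_1$ and $\phi(1) = \psi(1) = r_2$. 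The main obstacle is closing this step, because a difference of strictly convex functions need not be strictly convex. My plan is to combine the one-sided derivatives of $\phi - \psi$ at $t = 0$ and $t = 1$ (which exist since convex functions on an interval are differentiable from each side) with the analogous equalities arising from $z$ to show that the support half-planes of $K_1$ at $x, y$ and of $K_2$ at $x, y$ are forced into contradictory configurations, completing the proof.
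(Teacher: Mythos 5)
Your opening is correct and in fact goes beyond the paper, which does not prove Lemma \ref{lemma:twocircles} at all but cites \cite{MSW} (pg.~13--14): the midpoint argument ruling out three collinear common points is a valid use of Definition \ref{def:strictlyconvex}, and your proof that $t \mapsto ||\gamma(t) - w||$ is strictly convex for $w \notin \ell$ is also sound. So the portion of your proposal that simply invokes the standard homothets fact from \cite{MSW} reproduces the paper's entire ``proof.'' (One small slip along the way: non-collinearity of $x,y,z$ only guarantees that \emph{one} of them is off $\ell$, not two; to get two, note that $\ell$ meets each circle $B_{||\cdot||}(p_i, r_i)$ in exactly the two points $p_i \pm r_i e$, where $e$ is a unit vector along $\ell$, so two common points on $\ell$ would force these two-element sets to coincide and hence $p_1 = p_2$.)

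The genuine gap is your final step, and it is not merely unfinished --- the data you propose to use cannot produce a contradiction. The conditions you extract ($\phi, \psi$ strictly convex with $\phi(0)=\psi(0)=r_1$, $\phi(1)=\psi(1)=r_2$) are satisfied in the Euclidean plane whenever two circles legitimately meet in two points off the center line, so they are consistent; adjoining $z$ merely adds a third strictly convex function with the same endpoint values, and three strictly convex functions can agree at two common parameters without contradiction (they could be identical). Nothing in one-dimensional convexity along $\ell$, nor in the one-sided derivatives of $\phi - \psi$ (a difference of strictly convex functions is essentially unconstrained and can vanish at both endpoints), distinguishes three intersection points from two; the obstruction to a third point is irreducibly two-dimensional. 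The ``support half-plane'' plan also stumbles on the fact that strictly convex norms need not be smooth, so support lines at $x, y$ are not unique, and the tangency/crossing analysis you would need is exactly the missing content. A workable route in the equal-radii case ($K_2 = v + K_1$, $v = p_2 - p_1 \neq 0$): each common point $x_i$ gives $x_i, x_i - v \in \partial K_1$; distinct common points must lie on distinct lines parallel to $v$ (otherwise at least three collinear points lie on $\partial K_1$, using $v \neq 0$), so three common points yield three parallel chords of $K_1$, each a translate of the segment from $0$ to $v$ and hence of equal length, contradicting the strict concavity of the parallel-chord-length function of a strictly convex body. The unequal-radii case requires a separate homothety-centered argument, which is precisely what the treatment in \cite{MSW} supplies; as written, your closing step is a plan rather than a proof.
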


\begin{lemma}\label{lemma:duality}
The spaces $(\mathbb{R}^2, ||\cdot||_\infty)$ and $(\mathbb{R}^2, ||\cdot||_1)$ are isometric.
\end{lemma}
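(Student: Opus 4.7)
The plan is to exhibit an explicit linear isometry $T: (\mathbb{R}^2, \|\cdot\|_\infty) \to (\mathbb{R}^2, \|\cdot\|_1)$. Geometrically, this amounts to the observation that the $L^\infty$ unit ball is the axis-aligned square with vertices $(\pm 1, \pm 1)$, while the $L^1$ unit ball is the square with vertices $(\pm 1, 0)$ and $(0, \pm 1)$; these two squares differ only by a $45^\circ$ rotation and a scaling by $\sqrt{2}$. Since any linear map that sends one unit ball onto the other must be an isometry between the associated norms, producing such a map will suffice.

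Concretely, I would define
\[ T(x,y) \;=\; \left( \frac{x+y}{2},\; \frac{x-y}{2}\right), \]
and verify by direct computation that
\[ \|T(x,y)\|_1 \;=\; \frac{|x+y| + |x-y|}{2} \;=\; \max(|x|,|y|) \;=\; \|(x,y)\|_\infty. \]
The middle equality is the key identity $|a+b| + |a-b| = 2\max(|a|,|b|)$, which I would establish by a short case analysis on the signs of $a\pm b$ (equivalently, by noting that exactly one of $a+b,\ a-b$ has absolute value $|a|+|b|$ and the other has absolute value $\bigl||a|-|b|\bigr|$, whose sum is $2\max(|a|,|b|)$). Applying this identity to $T(p-q)$ and using linearity of $T$ then gives $\|T(p) - T(q)\|_1 = \|p - q\|_\infty$ for all $p,q \in \mathbb{R}^2$, which is precisely the statement that $T$ is an isometry between the two normed spaces.

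There is essentially no obstacle here; the only thing to be careful about is the algebraic identity $|a+b| + |a-b| = 2\max(|a|,|b|)$, which is a one-line case check. The lemma will be used later (for instance, in Corollary~\ref{lonecrescent}) to transfer strong crescent configurations between $L^\infty$ and $L^1$ via the map $T$.
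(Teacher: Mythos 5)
Your proof is correct and takes essentially the same approach as the paper: both exhibit an explicit linear map given (up to scaling) by a $45^\circ$ rotation, your $T(x,y) = \left(\frac{x+y}{2}, \frac{x-y}{2}\right)$ being $\frac{1}{2}$ times the paper's matrix $\begin{bmatrix}1 & 1\\ 1 & -1\end{bmatrix}$, i.e., its inverse, verified via the same key identity in its dual form $|a+b|+|a-b| = 2\max(|a|,|b|)$ rather than $\max\{|a+b|,|a-b|\} = |a|+|b|$.
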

\begin{proof}

Let $T: (\mathbb{R}^2, ||\cdot||_\infty)\to (\mathbb{R}^2, ||\cdot||_1)$ be the linear map given by the matrix $\begin{bmatrix}1 & 1\\ 1 & -1\end{bmatrix}$. For each $(x,y) \in \R^2$ we have
$$
\norm{  \begin{bmatrix} 
   1 & 1 \\
  1 & -1  \\
   \end{bmatrix} 
   \begin{bmatrix}
   x\\
   y\\
   \end{bmatrix}}_\infty = \max\{|x+y|, |x-y|\} = |x|+|y| = \norm{\begin{bmatrix} x\\y\end{bmatrix} }_1. 
$$   

This establishes an isometric map between the $L^\infty$ unit ball and the $L^1$ unit ball. 
\end{proof}

\subsection{Line-like configurations}\label{section:linelikedef}

 We first recall the notions of general position and crescent configurations in the Euclidean setting. Crescent configurations were first studied by Erd\H{o}s in \cite{Erd89}, and the term ``crescent configuration'' was coined by Burt et al. in \cite{BGMMPS}. 

\begin{defi}\label{def:euclgeneralposition}
A set of points in the plane is said to lie in \textbf{general position} if no three points lie on a common line and no four points lie on a common circle. 
\end{defi}

\noindent \textbf{Definition \ref{def:euclcrescent}}\textbf{.} \textit{A set of $n$ points in the plane is said to form a \textbf{crescent configuration} if the following two conditions hold.
\begin{enumerate}
    \item The $n$ points lie in general position.
    \item For each $1 \leq i \leq n - 1$, there exists a distance which occurs with multiplicity exactly $i$. 
\end{enumerate}}

We want to generalize the notion of crescent configurations to a general normed space. In the Euclidean setting, $n$ equally spaced points on a line and $n$ equally spaced points on a circular arc (Figure \ref{fig:l2_trivial}) satisfy crescent configuration condition (2). The purpose of condition (1) is to omit these trivial configurations. The following example demonstrates that there exist trivial constructions in other norms which satisfy Definition \ref{def:euclcrescent}. For larger classes of examples, see Sections \ref{section:linelinelike} and \ref{section:circlelinelike}. 

\begin{example}\label{ex:loobadlinelike}
Consider the $L^\infty$ norm. For each $n$, there exist infinitely many sets of $n$ points which satisfy crescent configuration condition (2), and satisfy the property that no three points lie on a line and no four points lie on an $L^\infty$ ball. To construct such a set, start with $n$ equally spaced points on a horizontal line, say 

\[ (1,0), (2,0), (3,0), \dots, (n, 0). \]

Then perturb the points in the $y$ direction. Specifically, pick $\epsilon_1, \dots, \epsilon_n \in \R$ so that the point set 

\[ (1,\epsilon_1), (2,\epsilon_2), \dots, (n, \epsilon_n) \]

satisfies the following two properties. \begin{enumerate}
    \item For all $i,j \in \{ 1, \dots, n \}$, $d_{L^\infty}(\ (i,\epsilon_i), (j, \epsilon_j)\ ) = |j-i|$. 
    \item No three points lie on a line. 
\end{enumerate}

For example, this can be accomplished by picking $\epsilon_i = 1/i$. See Figure \ref{fig:loobadlinelike}. 

\begin{figure}[h!]
    \centering
    \includegraphics[scale=0.3]{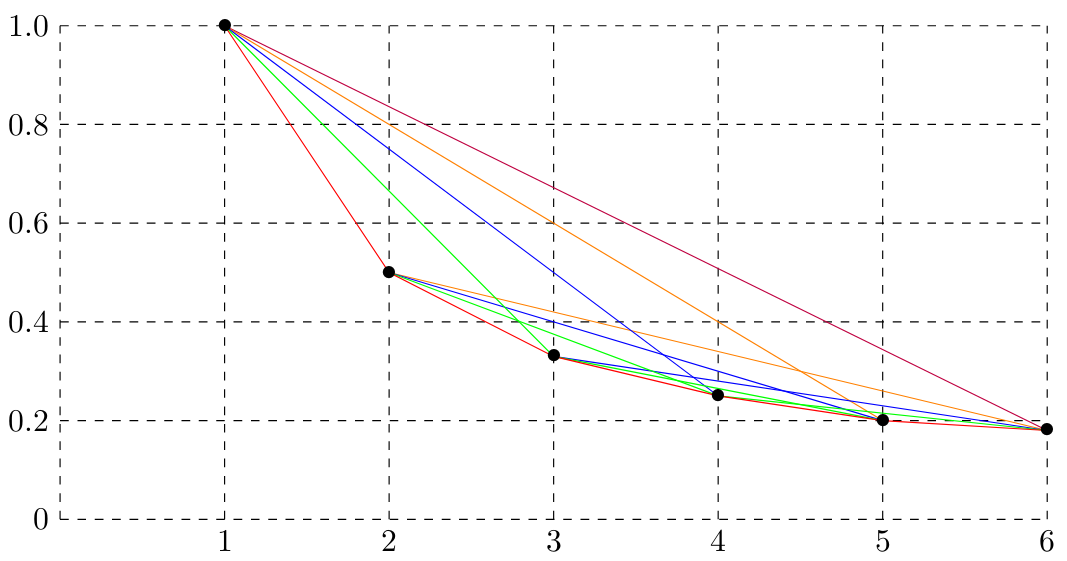}
    \caption{A set of $n = 6$ points which form a weak crescent configuration in $L^\infty$. Coordinates $\{ (1,1), (2,  1/2), (3,  1/3), (4, 1/4), (5, 1/5), (6, 1/6) \}$.}
    \label{fig:loobadlinelike}
\end{figure}

\end{example}

Example \ref{ex:loobadlinelike} demonstrates the usefulness of a stronger notion of general position. Note that a common feature of the three trivial configurations presented in Figures \ref{fig:l2_trivial} and \ref{fig:loobadlinelike} is that their distance graphs are isomorphic to the distance graph of equally spaced points on a line in the following sense.

\begin{defi}
Let $S, T \subset \R^2$ such that $|S| = |T| = n$ for some $n \in \N$. Let $||\cdot||_S, ||\cdot||_T$ be two norms in $\R^2$. We say that the \textbf{distance graphs of $\bm{S}$ in $\bm{||\cdot||_S}$ and $\bm{T}$ in $\bm{||\cdot||_T}$ are isomorphic} if there exists a bijection $\phi: S \to T$ such that for all $a,b,c,d \in S$ we have 
\[ ||a - b||_S = || c - d||_S \iff ||\phi(a) - \phi(b)||_T = ||\phi(c) - \phi(d)||_T.\]
\end{defi}

The choice of comparing an arbitrary distance graph to the distance graph of equally spaced points on a line is natural because equally spaced points on a line have the same structure in any normed space.

\begin{lemma}\label{lemma:equallyspaced}
Fix a norm $||\cdot||$. Let $S = \{ s_1, \dots, s_n \} \subset \R^2$ be a set of $n$ equally spaced points on a line for some $n \in \N$. In other words, $s_1, \dots, s_n$ lie on a common line and
\[ d_{||\cdot||}(s_1, s_2) = \dots = d_{||\cdot||}(s_i, s_{i+1}) = \dots = d_{||\cdot||}(s_{n-1}, s_n ).\]
Then for all $i,j \in \{ 1, \dots, n \}$ we have 
\[ d_{||\cdot||}(s_i, s_j) = |j - i|\cdot d_{||\cdot||}(s_1, s_2).\]
\end{lemma}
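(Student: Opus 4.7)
The plan is to reduce the claim to the one-dimensional case by parametrizing the line, then apply a short induction. Since $s_1,\dots,s_n$ lie on a common line, pick any nonzero direction vector $v$ along that line and write
\[
s_k \;=\; s_1 + t_k\, v \qquad (k = 1,\dots,n),
\]
for real scalars $t_1 = 0 < \text{(stuff)}$, in general $t_k \in \mathbb R$. Positive homogeneity (property~(2) of the norm, extended to scalars of either sign via central symmetry $\|{-x}\| = \|x\|$) gives
\[
d_{\|\cdot\|}(s_i, s_j) \;=\; \|(t_i - t_j)v\| \;=\; |t_i - t_j|\cdot \|v\|.
\]
Set $c := |t_2 - t_1| = |t_2|$. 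The hypothesis $d(s_k, s_{k+1}) = d(s_1, s_2)$ for all $k$ thus becomes $|t_{k+1} - t_k| = c$ for every $k$.

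After translating and possibly reflecting (i.e., replacing $v$ by $-v$), I may assume $t_1 = 0$ and $t_2 = c > 0$. The key step is to show by induction that $t_k = (k-1)c$ for every $k \ge 1$. The inductive hypothesis up to index $K$ gives $t_{K-1} = (K-2)c$ and $t_K = (K-1)c$. From $|t_{K+1} - t_K| = c$, there are exactly two possibilities: $t_{K+1} = Kc$ or $t_{K+1} = (K-2)c = t_{K-1}$. Since the points $s_1,\dots,s_n$ are distinct, the $t_k$ are distinct, so the second option is ruled out and $t_{K+1} = Kc$, closing the induction.

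With the arithmetic-progression structure in hand, the conclusion is immediate: for any $i, j \in \{1, \dots, n\}$,
\[
d_{\|\cdot\|}(s_i, s_j) \;=\; |t_i - t_j|\cdot \|v\| \;=\; |j - i|\cdot c\cdot \|v\| \;=\; |j - i|\cdot d_{\|\cdot\|}(s_1, s_2).
\]
The only genuine content in the argument is the two-term recurrence inductive step, which is a trivial case analysis; the main ``obstacle,'' such as it is, is simply to notice that distinctness of the $s_k$ is the ingredient that prevents the parameters $t_k$ from oscillating and forces them into an honest arithmetic progression. Everything else follows from homogeneity of the norm applied along a single line.
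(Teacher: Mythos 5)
Your proof is correct, and at its core it runs on the same mechanism as the paper's: along a single line, homogeneity of the norm makes distances behave additively. The paper's proof is terser --- it observes that the consecutive difference vectors $s_{k+1}-s_k$ are linearly dependent with equal norms and immediately writes $\|s_j - s_i\| = \|s_j - s_{j-1}\| + \dots + \|s_{i+1}-s_i\| = |j-i|\cdot\|s_2-s_1\|$. Strictly speaking, linear dependence alone does not justify that additivity: parallel vectors pointing in opposite directions satisfy $\|x + (-x)\| = 0 \neq \|x\| + \|{-x}\|$, so one also needs to know that all the consecutive steps point in the \emph{same} direction. That is exactly the content of your inductive step: since $S$ is a set of $n$ distinct points, the alternative $t_{K+1} = t_{K-1}$ would force $s_{K+1} = s_{K-1}$, so distinctness rules out oscillation and forces the parameters $t_k$ into an honest arithmetic progression. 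Your parametrize-and-induct argument is thus a fleshed-out version of the paper's one-line computation: it costs a little length but makes explicit both where signed homogeneity (via central symmetry of the unit ball) enters and that distinctness of the points is the hypothesis preventing backtracking --- a gap the paper's ``by linear dependence'' silently passes over.
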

\begin{proof}
If $i = j$, clearly $d_{||\cdot ||}(s_i, s_j) = 0$. Without loss of generality, assume $i < j$. Because $s_1, \dots, s_n$ lie on a line, the vectors $s_2 - s_1$, $s_3 - s_2$, $\dots$, $s_n - s_{n-1}$ are linearly dependent. Moreover, $|| s_2 - s_1|| = ||s_3 - s_2|| = \dots = ||s_n - s_{n-1} ||$. By linear dependence,

\begin{align*}
    || s_j - s_i || &= || s_j - s_{j-1} || + ||s_{j-1} - s_{j-2} || + \dots + ||s_{i+1} - s_i || \\
    &= | j - i | \cdot || s_2 - s_1 ||.
\end{align*}
\end{proof}
Lemma \ref{lemma:equallyspaced} immediately implies the following. 

\begin{cor}\label{cor:equallyspaced}
Let $S,T \subset \R^2$ such that $|S| = |T| = n$ for some $n \in \N$. Suppose $S$ and $T$ are sets of equally spaced points on a line. Let $||\cdot||_S$, $||\cdot||_T$ be any two norms. Then the distance graphs of $S$ in $||\cdot||_S$ and $T$ in $||\cdot||_T$ are isomorphic. 
\end{cor}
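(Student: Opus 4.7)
The plan is to deduce the corollary directly from Lemma \ref{lemma:equallyspaced} by showing that the equality pattern among pairwise distances depends only on the difference of indices once the points are ordered along the line. First I would fix orderings $S = \{s_1, \dots, s_n\}$ and $T = \{t_1, \dots, t_n\}$ in which the points appear consecutively along their respective lines, and define the bijection $\phi: S \to T$ by $\phi(s_i) = t_i$.

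Next I would apply Lemma \ref{lemma:equallyspaced} to each configuration. Writing $d_S := \|s_2 - s_1\|_S$ and $d_T := \|t_2 - t_1\|_T$, the lemma gives
\[ \|s_i - s_j\|_S = |j - i|\cdot d_S, \qquad \|t_i - t_j\|_T = |j - i|\cdot d_T, \]
for all $i,j \in \{1, \dots, n\}$. Both $d_S$ and $d_T$ are strictly positive, since consecutive points in a set of equally spaced points are distinct (this uses the nondegeneracy axiom of a norm).

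Then for any $a = s_i, b = s_j, c = s_k, d = s_l$ in $S$, the chain of equivalences
\[ \|a - b\|_S = \|c - d\|_S \iff |j - i|\, d_S = |l - k|\, d_S \iff |j - i| = |l - k| \iff |j - i|\, d_T = |l - k|\, d_T \iff \|\phi(a) - \phi(b)\|_T = \|\phi(c) - \phi(d)\|_T \]
holds, where the middle step uses $d_S, d_T > 0$ to cancel the scalars. This verifies the defining condition of distance-graph isomorphism for the bijection $\phi$.

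There is essentially no obstacle here: all of the real content is in Lemma \ref{lemma:equallyspaced}, which reduces the distance $\|s_i - s_j\|$ to a multiple of the common spacing. The corollary is just the observation that multiplicative rescaling of all distances by a positive constant preserves the equality pattern, so the distance graphs in the two (possibly very different) norms coincide up to relabeling.
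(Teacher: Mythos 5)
Your proof is correct and is exactly the argument the paper has in mind: the paper states the corollary as an immediate consequence of Lemma \ref{lemma:equallyspaced} without writing out details, and your write-up simply makes explicit the index-matching bijection and the cancellation of the positive spacings $d_S, d_T$. No gaps; this matches the paper's approach.
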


Now we define line-like configurations. By Corollary \ref{cor:equallyspaced}, they are well-defined. 

\begin{defi}\label{def:general_line_like}
Fix a norm $||\cdot||$. A set of $n$ points in the plane is said to form a \textbf{line-like configuration in \bm{$||\cdot||$}} if its distance graph is isomorphic to the distance graph of $n$ equally spaced points on a line. 
\end{defi}

In the next section (Section \ref{section:crescentconfigurations}), we use the concept of line-like configurations to define strong general position and strong general configurations. The remaining content of this section consists of examples of line-like configurations. Line-like configurations are studied in depth in Section \ref{section3}. 

First, we provide simple examples of line-like configurations of size $n$, for every natural number $n$.

\begin{exa}\label{ex:linelike_two_three} $ $
\begin{enumerate} 
\item Trivially, in any norm, equally spaced points on a line form a line-like configuration. 
\item In $L^2$, equally spaced points on a circular arc form a line-like configuration. See Figure \ref{fig:l2_trivial}. 
\item In $L^\infty$, certain perturbations of equally spaced points on a line form a line-like configuration. See Example \ref{ex:loobadlinelike}.
\item In Theorem \ref{thm:linelinelike} and Theorem \ref{thm:circlelinelike} we provide constructions of line-like configurations in a broad class of norms. See Sections \ref{section:linelinelike} and \ref{section:circlelinelike}. 
\end{enumerate}
\end{exa}

Second, we describe all line-like configurations of size 2, 3 in an arbitrary norm.

\begin{exa} $ $
\begin{enumerate}
    \item Any two distinct points trivially form a line-like configuration in any norm. 
    \item Line-like configurations of size three correspond to (possibly degenerate) isosceles triangles. To construct a line-like configuration of size three in an arbitrary norm $||\cdot||$, start with distinct points $A,B \in \R^2$. Draw the $||\cdot||$ circle centered at $B$ with radius $|AB|$. Pick any point $C$ lying on this circle such that $|AC| \neq |AB|$. Then $|AB| = |BC|$ and $|AB| \neq |AC|$, so $A,B,C$ forms a line-like configuration. 
\end{enumerate}
\end{exa}

Finally, we classify line-like configurations of size 4 in strictly convex norms. 

\begin{lemma}\label{lemma:line_like_four}
Let $||\cdot||$ be a strictly convex norm. Let $A, B, C$ be a line-like configuration of size three. Then there exist exactly two points $D,E$ such that $ABCD$ and $ABCE$ are line-like configurations. Moreover, at least one of $ABCD$ and $ABCE$ is a parallelogram. 
\end{lemma}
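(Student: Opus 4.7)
The plan is to classify the possible distance profiles for a fourth point $D$, exhibit the parallelogram solutions explicitly, and then invoke Lemma~\ref{lemma:twocircles} to bound the number of extensions.

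After a relabeling I will assume $|AB| = |BC| = d$ and $|AC| = d'$ with $d \neq d'$, so $B$ is the unique ``middle'' vertex of the size-three line-like configuration. A line-like configuration of size four has distance-multiplicity profile $(3,2,1)$ in which the three equidistant edges form a Hamiltonian path, the two equidistant edges form a perfect matching, and the remaining edge is isolated. The edges $AB, BC$ (both of length $d$) already lie in the would-be mult-$3$ class and meet at $B$; for the third $d$-edge to extend the path $A$--$B$--$C$ rather than create a star at $B$, I must have $|DA|=d$ or $|DC|=d$. Similarly, the mult-$2$ class contains $AC$, and for the additional $d'$-edge to produce a matching, it must be disjoint from $AC$, forcing $|DB|=d'$. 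These constraints leave precisely two admissible profiles for $D$:
\begin{enumerate}
    \item[\textbf{(I)}] $|DC|=d$, $|DB|=d'$, and $|DA|\notin\{d,d'\}$;
    \item[\textbf{(II)}] $|DA|=d$, $|DB|=d'$, and $|DC|\notin\{d,d'\}$.
\end{enumerate}

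In each case $D$ lies on the intersection of two $||\cdot||$-circles with distinct centers. To produce a parallelogram solution of type (I) I will take $D:=B+C-A$; a direct check gives $D-C=B-A$ and $D-B=C-A$, so $|DC|=d$ and $|DB|=d'$, and the quadrilateral with vertices $A,B,D,C$ (in this cyclic order) has equal and parallel opposite sides, hence is a parallelogram. Symmetrically, $E:=A+B-C$ realises profile (II) as a parallelogram. This produces the two (parallelogram) extensions named in the lemma.

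The main obstacle will be the uniqueness claim: to show that no extension other than $D,E$ exists, one must rule out the ``second'' intersection of each circle pair permitted by Lemma~\ref{lemma:twocircles}. The natural approach is to argue that at any such second intersection the candidate new distance $|DA|$ (respectively $|EC|$) must coincide with $d$ or $d'$ and so fails the mult-$(3,2,1)$ line-like profile. This step relies essentially on strict convexity of $||\cdot||$, beyond what Lemma~\ref{lemma:twocircles} directly provides, and I expect it to be the most delicate part of the proof.
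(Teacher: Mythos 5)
Your deferred uniqueness step is not merely delicate---it is false as you have framed it, and the trouble traces back to a misreading of the lemma. The paper reads $ABCD$ as an ordered path (the fourth point extends the path at the $C$-end only), so the candidate fourth points are exactly the intersection points of the single circle pair $B_{||\cdot||}(C,|BC|)$ and $B_{||\cdot||}(B,|AC|)$ (the paper's proof writes radius $|AB|$ for the second circle, a typo which your constraint $|DB|=d'$ silently corrects). The paper's two points $D,E$ are the \emph{two intersections of this one pair}: $D=B+C-A$ is the parallelogram, while $E$---whose existence the paper establishes by an intermediate-value/monotonicity argument on the two sides of the line $\overleftrightarrow{BC}$, using strict convexity to get $|AC|<2|AB|$---is generically \emph{not} a parallelogram. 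That is precisely why the lemma says ``at least one'' rather than ``both.'' A concrete counterexample to your plan, already in $L^2$: take $A=(0,0)$, $B=(1,0)$, $C=(3/2,\sqrt{3}/2)$, so $d=1$, $d'=\sqrt{3}$. The second intersection of your type-(I) circle pair is $D_2=(1,\sqrt{3})$, and $|D_2A|=2\notin\{1,\sqrt{3}\}$: the set $\{A,B,C,D_2\}$ consists of four equally spaced points on a unit $L^2$ circle and is a perfectly valid line-like configuration. So the second intersection cannot be killed by a distance coincidence; it must be \emph{counted}, and its existence must be \emph{proved}---the intermediate-value argument is exactly the content your proposal is missing.

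Relatedly, under your unordered (set) reading the claim ``exactly two'' would itself fail: in the example above both type-(I) points, $D_1=(5/2,\sqrt{3}/2)$ and $D_2=(1,\sqrt{3})$, and both type-(II) points, $E_1=(-1/2,-\sqrt{3}/2)$ and $E_2=(-1/2,\sqrt{3}/2)$, yield line-like four-point sets---four extensions, not two. This confirms the ordered reading, under which your profile (II) is not a separate case but the reversal of (I). What does survive of your write-up: the explicit parallelogram $D=B+C-A$ with the verification $D-C=B-A$, $D-B=C-A$ coincides with the paper's construction, and your multiplicity/path-versus-matching analysis is a more careful justification than the paper gives for the claim that every extension lies on the circle intersection. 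Note, though, that the converse inclusion can genuinely fail at an intersection point: for $A=(0,0)$, $B=(1,0)$, $C=(1,1)$ the second intersection is $(0,1)$, producing a unit square in which $d$ occurs four times---an edge case (along with collinear $A,B,C$, where the two circles are tangent) that neither your proposal nor the paper's proof fully dispatches.
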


\begin{proof}

Let $C_1$ be $B_{||\cdot||}(C, |BC| )$ and $C_2$ be $B_{||\cdot||}(B, |AB| )$. The set of points $X$ for which $ABCX$ is a line-like configuration is precisely the set of intersection points of the two circles $C_1$ and $C_2$. Translate $\overrightarrow{AC}$ to point $B$ and let the tip of the translated vector be $D$. Then $ABCD$ is a parallelogram with $|AB| = |BC| = |CD|$ and $|AC| = |BD|$. So $D$ lies on both circles $C_1$ and $C_2$. 

By Lemma \ref{lemma:twocircles}, $C_1$ and $C_2$ have at most two intersection points. We show that a second intersection point exists by a monotonicity argument. Let the intersection point of line $\overleftrightarrow{BC}$ with $C_1$ which is not $B$ be $R$. Let the intersection points of line $\overleftrightarrow{BC}$ with $C_2$ be $P$ and $Q$ so that $\overrightarrow{BQ}$ points in the same direction as $\overrightarrow{CB}$ and $\overrightarrow{BP}$ points in the same direction as $\overrightarrow{BC}$. Since $||\cdot||$ is strictly convex, $|AC| < 2 \cdot |AB|$, which implies $|CP| < |CR|$. On the other hand, because $\overrightarrow{BQ}$ points in the same direction as $\overrightarrow{CB}$, $|CQ| > |CB|$. Thus $C_1$ and $C_2$ intersect once in each upper half-plane above and below line $\overleftrightarrow{BC}$. These give the two intersection points $D$ and $E$. 
\begin{figure}[h!]
    \centering
    \includegraphics[scale=0.3]{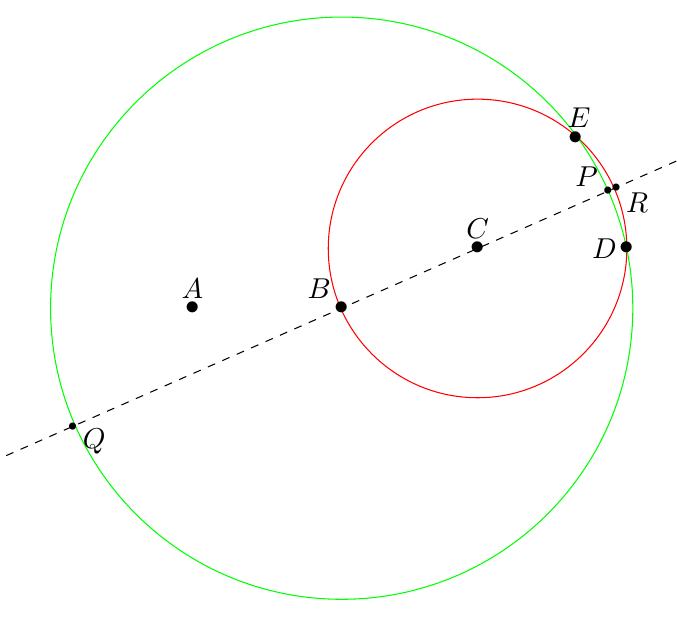}
    \includegraphics[scale=0.3]{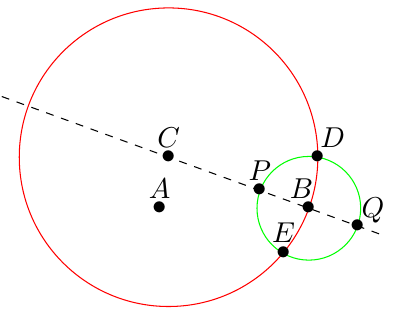}
    \caption{Illustrating the proof of Lemma \ref{lemma:line_like_four}. Here we use the $L^2$ norm. Given three points $A,B,C$ which form a line-like configuration, there exist exactly two points $D,E$ such that $ABCD$ and $ABCE$ form a line-like configuration.}
    \label{fig:linelike4_proof}
\end{figure}
\end{proof}


\subsection{Strong general position and strong crescent configurations}\label{section:crescentconfigurations} 

Using this notion of line-like configurations, we can define strong general position in an arbitrary norm $||\cdot||$. 

\begin{defi}\label{def:generalgeneralposition}
A set of points in the plane is said to lie in \textbf{strong general position in \bm{$||\cdot||$}} if the following three conditions hold.
\begin{enumerate}
    \item No three points lie on a common line.
    \item No four points lie on a common $||\cdot||$ circle.
    \item No four points form a line-like configuration of size four.
\end{enumerate}
\end{defi}

\begin{rem}\label{rem:l2stronggeneral}
The notion of $L^2$ strong general position, as given in Definition \ref{def:generalgeneralposition}, is more restrictive than the standard notion of $L^2$ general position, as given in Definition \ref{def:euclgeneralposition}. Specifically, strong general position additionally forbids line-like configurations of size four. Borrowing the notation from Lemma \ref{lemma:line_like_four}, let $ABCD$ be a line-like configuration which is not a parallelogram. By symmetry, the perpendicular bisectors of $AB$, $BC$, and $CD$ meet in a common point, so $A,B,C,D$ lie on a common circle. Thus, a set of points in $L^2$ general position lies in $L^2$ strong general position if and only if it does not contain a parallelogram $ABCD$ with $AB = BC = CD$, $AC = BD$, $AB || CD$ and $AC || BD$. 
\end{rem}

Finally, using this notion of general position in $||\cdot||$, we define strong crescent configurations in $||\cdot||$. 

\begin{defi}\label{def:generalcrescent} 
A set of $n$ points in the plane is said to form a \textbf{strong crescent configuration in $||\cdot||$} if the following three conditions hold.
\begin{enumerate}
    \item The $n$ points lie in strong general position in $||\cdot||$.
    \item The $n$ points determine $n - 1$ distinct distances.
    \item For each $1 \leq i \leq n - 1$, there exists a distance which occurs with multiplicity exactly $i$. 
\end{enumerate}
\end{defi}

Below, we collect examples of strong crescent configurations under various norms. 

\begin{exa}\label{ex:strong_construction}$ $
\begin{enumerate}
    \item In any norm, crescent configurations of size 2, 3 exist trivially. For constructions, see Example \ref{ex:linelike_two_three}. 
    \item For arbitrary $||\cdot||$, we construct a strong crescent configuration of size $4$. See Section \ref{section:crescentgeneral}. 
    \item Pal\' asti's \cite{Pal87, Pal89} constructions of crescent configurations of size $n \leq 5$ are strong. Additionally, Durst et al. \cite{DHHMP} construct many strong crescent configurations of size $n = 4,5$. However, known constructions of crescent configurations of size $6,7,8$ (due to Pal\' asti) are not strong. We construct a strong crescent configuration of size $6$. See Section \ref{section:crescentl2}.
    \item In $L^\infty$ (and thus in its dual norm $L^1$), we construct strong crescent configurations of size $4,5,6,7,8$. See Section \ref{section:crescentloo}.
\end{enumerate}
\end{exa}


\section{Constructions of line-like configurations}\label{section3}
In the previous section we defined line-like configurations (cf. Definition \ref{def:general_line_like}). Line-like configurations of size four are used when defining strong crescent configurations in $||\cdot||$. In this section, we provide constructions of line-like configurations of size $n$ for $n \geq 5$ in a broad class of norms.


\subsection{Line-like configurations in non-strictly convex norms}\label{section:linelinelike}

Recall that a norm is non-strictly convex if and only if its unit circle contains a line segment (Definition \ref{def:strictlyconvex}). In general, when studying distinct distances problems in normed spaces, it is not uncommon for non-strictly convex norms to have vastly different behavior compared to strictly convex norms. For example, consider the unit distances problem. Let $u_{||\cdot||}(n)$ denote maximum number of distances of length 1 that can be determined by $n$ points in $\R^2$ in the norm $|||\cdot||$. If $||\cdot||$ is strictly convex, then $u_{||\cdot||}(n) = O(n^{4/3} )$ \cite{Va}. If $||\cdot||$ is not strictly convex, then $u_{||\cdot||}(n) = \Theta(n^2)$ \cite{Br}. 

In the following result, for any non-strictly convex norm, we construct many line-like configurations which satisfy the property that no three points lie on a line. The key insight behind the proof is that in non-strictly convex norms, we can have $||x + y ||= ||x|| + ||y||$ without $x,y \in \R^2$ being linearly dependent. Thus there exist sets of points which have the additivity relations of equally spaced points on a line, even though the points do not lie on a common line. 

\linelinelike*

\begin{proof} See Figure \ref{fig:linelikeline}. Let $||\cdot||$ be a norm which is not strictly convex. Then its unit circle contains a line segment. Denote the (scaled and translated) copy of this line segment on a general circle $B_{||\cdot||}(p, r)$ by $\ell_{p, r}$. Pick a point $P_1$. For all $1 \leq i \leq n - 1$, pick a point $P_{i+1}$ lying on $\ell_{P_i, 1}$. Then for all $P_i, P_j \in \{ P_1, \dots, P_n \}$ we have $||P_j - P_i || = |j - i|$. Thus $\{ P_1, \dots, P_n \}$ is a line-like configuration. When picking each of the points $P_1, \dots, P_n$, there were infinitely many choices. Thus there are infinitely many such configurations. 

\begin{figure}[h!]
    \centering
    \includegraphics[scale=0.3]{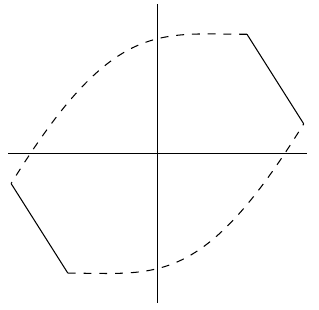}
    \includegraphics[scale=0.3]{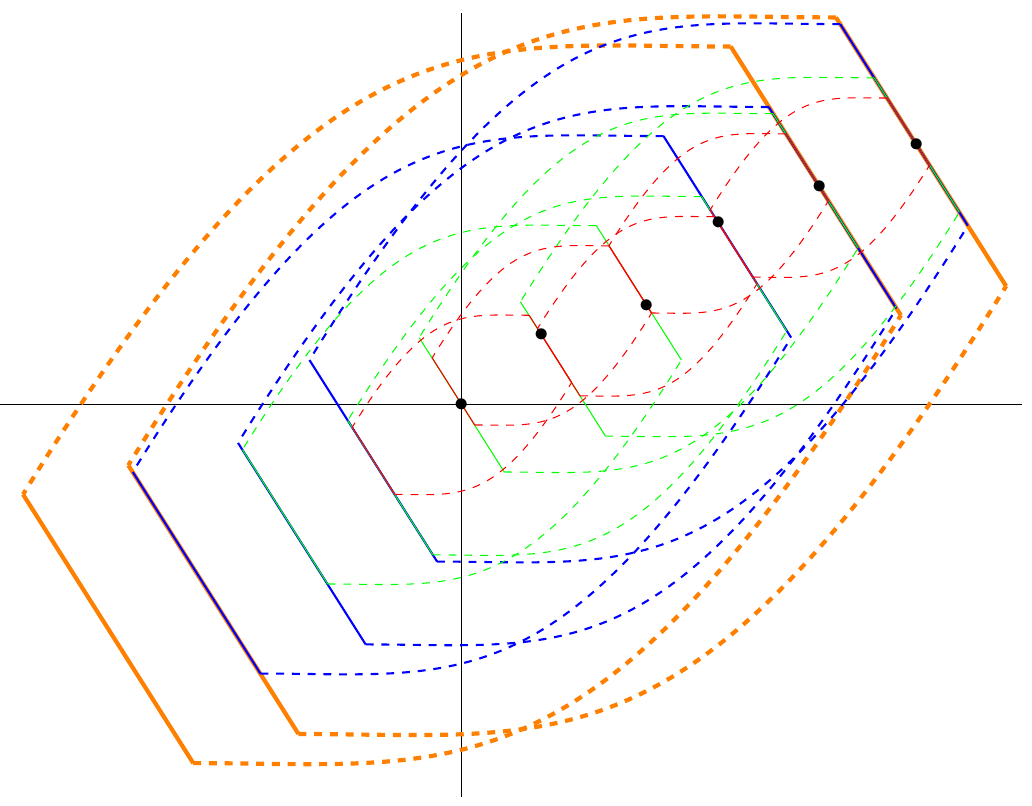}
    \caption{Left: a non-strictly convex norm. Right: Constructing a line-like configuration in a non-strictly convex norm. The red circles have radius 1, green circles have radius 2, the blue circles have radius 3, and the orange circles have radius 4.}
    \label{fig:linelikeline}
\end{figure}
\end{proof}

\begin{cor}\label{cor:linelinelike}
Let $||\cdot||$ be a norm which is not strictly convex. For each $n$, there exist infinitely many (after scaling and translating) line-like configurations of size $n$ in $||\cdot||$ which satisfy the property that no three points lie on a common line. 

\end{cor}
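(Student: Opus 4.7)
The plan is to adapt the construction in Theorem \ref{thm:linelinelike} by making the choice of each new point avoid lines through pairs of previously chosen points. Recall that in that construction, once we fix the line segment from $a$ to $b$ lying on the unit circle, we pick $P_1$ arbitrarily and then choose each $P_{i+1}$ anywhere on the one-dimensional segment $\ell_{P_i, 1}=\{P_i+a+t(b-a):t\in[0,1]\}$. The extra constraint we need is that $P_{i+1}$ does not lie on any line through a pair of previously chosen points. Since there are only $\binom{i}{2}$ such lines and $\ell_{P_i, 1}$ is a full segment, infinitely many admissible choices will remain at each step provided we can show that each forbidden line intersects $\ell_{P_i, 1}$ in at most finitely many points.

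The main technical step is therefore to prove that for each pair $P_j, P_k$ with $j<k\le i$, the line $L_{jk}$ through $P_j$ and $P_k$ meets $\ell_{P_i, 1}$ in at most one point. If $L_{jk}$ is transverse to $\ell_{P_i, 1}$ this is immediate; the only danger is that $L_{jk}$ coincides with the entire affine line $\mathcal{L}_i$ containing $\ell_{P_i,1}$. To rule this out, I will show that no previously chosen point $P_l$ (for $l\le i$) lies on $\mathcal{L}_i$. Expanding telescopically gives $P_l - P_i = -(i-l)\,a - \bigl(\sum_m t_m\bigr)(b-a)$, and membership in $\mathcal{L}_i = \{P_i + a + s(b-a):s\in\mathbb{R}\}$ would force $-(i-l+1)\,a = \lambda\,(b-a)$ for some scalar $\lambda$. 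Because $a$ and $b$ are linearly independent—indeed, the segment $[a,b]$ cannot lie on a line through the origin, since $\|tv\|=|t|\,\|v\|$ is strictly monotone along any ray while every point of $[a,b]$ has norm one—so $a$ and $b-a$ are also linearly independent, and the equation above is impossible unless $i-l+1=0$, contradicting $l \le i$.

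With the claim in hand, induction proceeds cleanly: at step $i+1$ we avoid at most $\binom{i}{2}$ points of the segment $\ell_{P_i, 1}$, so we select $P_{i+1}$ from a cofinite subset of an interval. Iterating for $i=1,\dots,n-1$ yields a line-like configuration of size $n$ in $\|\cdot\|$ with no three collinear points. Moreover, because each step leaves a continuum of valid choices, we obtain a family parametrized by an open subset of $[0,1]^{n-1}$ (times the initial choice of $P_1$); after quotienting by the three-dimensional group of scalings and translations, this family still contains infinitely many inequivalent configurations, which is what the corollary asserts.

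The main obstacle I anticipate is the geometric step ensuring that no forbidden line $L_{jk}$ can contain the entire segment $\ell_{P_i, 1}$; everything hinges on the linear independence of $a$ and $b$, which in turn requires the mild observation that the unit circle of a norm never meets a line through the origin in a nontrivial segment. Once that is nailed down, the counting of bad choices and the passage to infinitely many configurations modulo scaling and translation are routine.
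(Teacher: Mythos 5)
Your proposal is correct and takes essentially the same approach as the paper: repeat the construction of Theorem \ref{thm:linelinelike}, excluding at each step the finitely many points of $\ell_{P_i,1}$ that lie on a line through two earlier points. In fact you go further than the paper, which merely asserts this finiteness; your verification that no forbidden line can contain the whole segment $\ell_{P_i,1}$ (via the telescoping identity and the linear independence of $a$ and $b-a$) fills in exactly the step the paper leaves implicit.
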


\begin{proof} Repeat the proof of Theorem \ref{thm:linelinelike}. When choosing the point $P_{i+1}$, pick any point lying on $\ell_{P_i, 1}$ as like before, but now exclude any point lying on a line determined by any two points in $\{ P_1, P_2, \dots, P_i \}$. Infinitely many such $P_{i+1}$ exist because there are only finitely many points on $\ell_{P_i, 1}$ which lie on a line with two points in $\{ P_1, P_2, \dots, P_i \}$, and there are infinitely many points on $\ell_{P_i, 1}$.
\end{proof}

\begin{example}
Because $L^1$ and $L^\infty$ are non-strictly convex, Theorem \ref{thm:linelinelike} and Corollary \ref{cor:linelinelike} apply. The class of examples produced by Theorem \ref{thm:linelinelike} for $L^\infty$ generalizes Example \ref{ex:loobadlinelike}.
\end{example}


\subsection{Line-like configurations in norms whose unit circles contain an $L^2$ origin arc}\label{section:circlelinelike}

In Section \ref{section:linelinelike}, we construct line-like configurations of any size in non strictly convex norms. These constructions rely on the fact that in a non strictly convex norm, two circles can intersect in infinitely many points. By contrast, in a strictly convex norm, two circles intersect in at most two points (Lemma \ref{lemma:twocircles}). By this heuristic, we expect line-like configurations in strictly convex norms to be rarer. 

For a particular class of strictly convex norms, we construct line-like configurations of any size. Specifically, we consider norms whose unit circles contain a \textit{$L^2$ origin arc}. 

\begin{defi}
Let $A$ be an arc of positive length on an $L^2$ circle centered at the origin. Then we say $A$ is a \textbf{\bm{$L^2$} origin arc}. 
\end{defi}

In $L^2$, equally spaced points along a circular arc form a line-like configuration (Figure \ref{fig:l2_trivial}). This construction can be generalized to norms whose unit circles contain an $L^2$ origin arc. \\

\noindent \textbf{Theorem \ref{thm:circlelinelike}.} \textit{Let $||\cdot||$ be a norm whose unit circle contains an $L^2$ origin arc. Then for each $n$, there exist infinitely many (after scaling and translating) line-like configurations of size $n$ in $||\cdot||$.}

\begin{proof}
See Figure \ref{fig:circlenorm} for a unit circle of a norm whose unit circle contains an $L^2$ origin arc. 

First we introduce some terminology. Let $O$ denote the origin. For $P \in \R^2$, let $c_{P,r}(\theta) := P + (r \cos(\theta), r \sin(\theta) )$ denote a parameterization of an $L^2$ circle centered at $P$ with radius $r$. For points $P,Q \in \R^2$ with $P \neq Q$, let $t(P,Q)$ denote the unique $\theta \in [0, 2\pi)$ for which $c_{P, |PQ|} = Q$. 

Let $n \in \N$. We are given that the unit circle of $||\cdot||$ contains an $L^2$ origin arc. Let this arc be parameterized by $c_{O, r}(\theta)$ for $\theta \in [\theta_1, \theta_2]$, with $0 \leq \theta_1 < \theta_2 \leq \pi$. Pick $0  < \epsilon \leq (\theta_2 - \theta_1)/(n-2)$. Set $P_1 = c_{O, 1}(\theta_1)$ and $P_i = c_{P_{i-1}, 1}(\theta_1 + (i - 2)\epsilon)$ for $i \in \{ 2, 3, \dots, n \}$. See Figures \ref{fig:l2linelike} and \ref{fig:l2arclinelike}. We claim that $P_1, \dots, P_n$ form a line-like configuration in $||\cdot||$, which satisfy the property that no three points lie on a common line. (Setting $\epsilon = 0$ gives $n$ equally spaced points on a line.) 

It suffices to show the following. 
\begin{enumerate}
    \item $P_1, \dots, P_n$ form a line-like configuration in $L^2$. 
    \item For all $1 \leq i < j \leq n$, we have $t(P_i, P_j) \in [\theta_1, \theta_2]$. 
\end{enumerate}

Proof of (1): For each $i \in \{ 1, 2, \dots, n - 3 \}$, note that $\angle P_i P_{i+1} P_{i+2} = \angle P_{i+1} P_{i+2} P_{i+3} = \pi - \epsilon$. Thus reflecting about the perpendicular bisector of $P_{i+1} P_{i+2}$ sends $P_i, P_{i+1}, P_{i+2}, P_{i+3}$ to $P_i, P_{i+1}, P_{i+2}, P_{i+3}$. So the perpendicular bisectors of $P_{i} P_{i+1}$, $P_{i+1} P_{i+2}$, and $P_{i+2} P_{i+3}$ intersect in a point, which means that $P_i, P_{i+1}, P_{i+2}, P_{i+3}$ lie on a common circle (cf. Remark \ref{rem:l2stronggeneral}). This implies that $P_1, \dots, P_n$ lie on a common $L^2$ circle. Since $|P_i P_{i+1} | = 1$ for each $i \in \{ 1, 2, \dots, n - 1 \}$, the points $P_1, \dots, P_n$ are equally spaced on their common $L^2$ circle.

Proof of (2): By rotational symmetry, $t(P_i, P_{i+k} ) = t(P_1, P_{k - 1} )$ for all $1 \leq k \leq n - 1$. Using (1) and angle chasing, it can be shown that $t(P_1, P_i) \leq t(P_1, P_{i+1})$ for all $2 \leq i \leq n - 1$. This implies $t(P_i, P_j) \leq t(P_{i+1}, j)$ and $t(P_i, P_j) \leq t(P_i, P_{j-1})$ for $i,j \in \{ 1, 2, \dots, n \}$ with $i + 1 < j$. Thus $\min_{i,j} t(P_i, P_j) = t(P_1, P_2) = \theta_1$ and $\max_{i,j} t(P_i, P_j) = t(P_{n-1}, n ) = \theta_2$. 

\end{proof}

\begin{figure}[h!]
    \centering
    \includegraphics[scale=0.3]{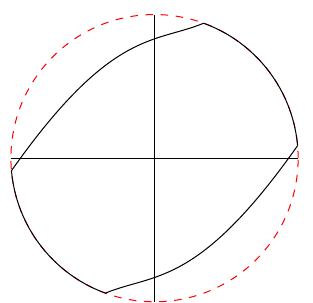}
    \caption{A norm whose unit circle intersects an $L^2$ origin arc.}
    \label{fig:circlenorm}
\end{figure}

\begin{figure}[h!]
    \centering
    \includegraphics[scale=0.22]{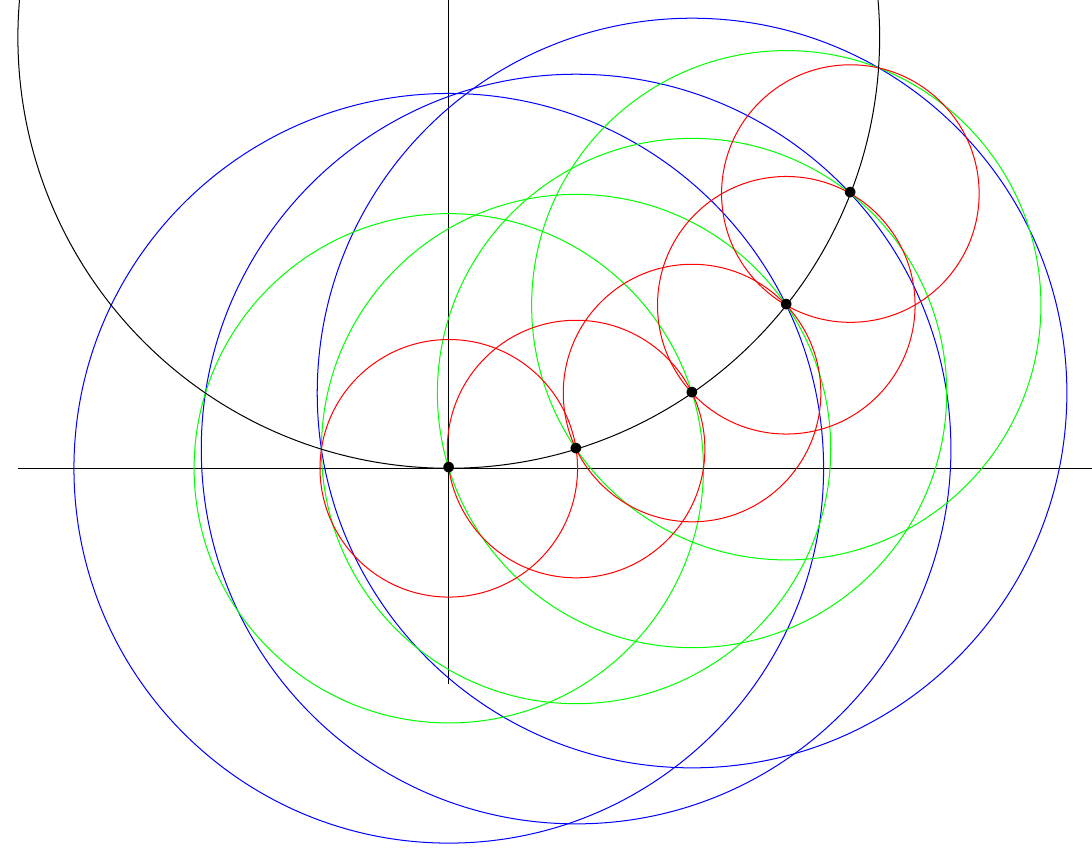}
    \caption{Constructing a line-like configuration in $L^2$.}
    \label{fig:l2linelike}
\end{figure}

\begin{figure}[h!]
    \centering
    \includegraphics[scale=0.22]{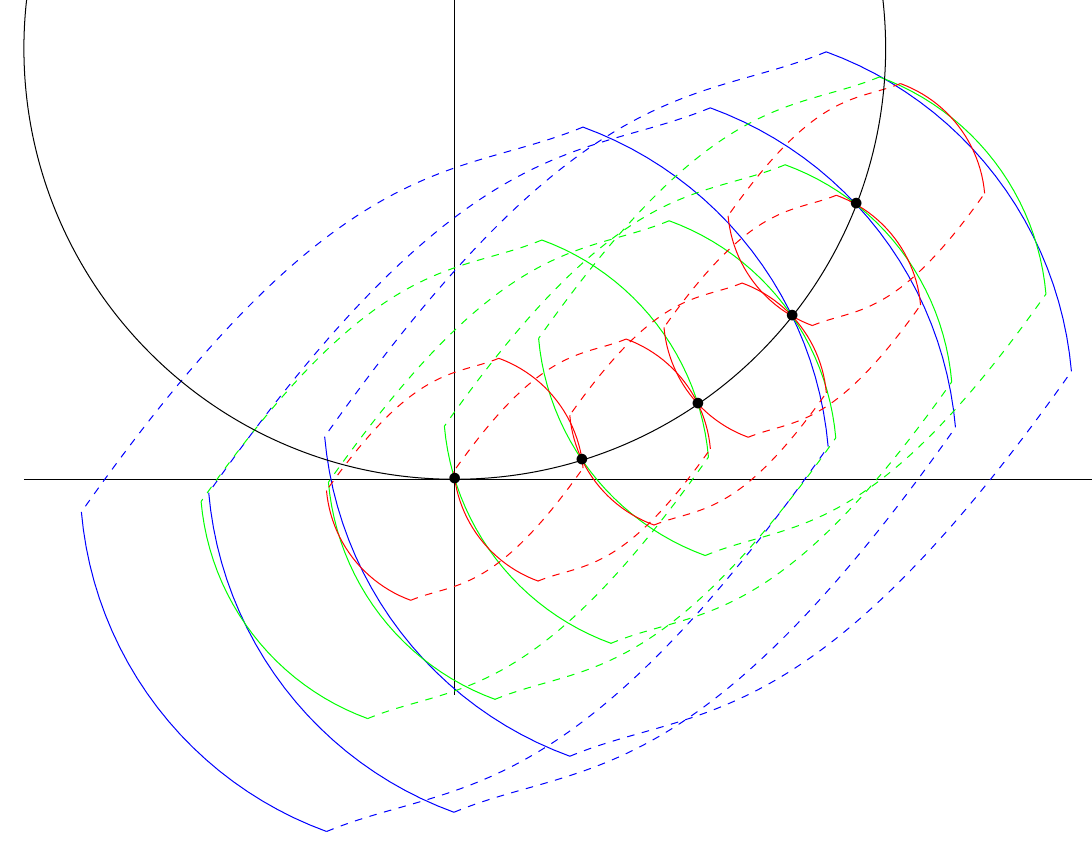}
    \caption{Constructing a line-like configuration in a norm whose unit circle contains an $L^2$ origin arc.}
    \label{fig:l2arclinelike}
\end{figure}

\begin{cor}\label{cor:circlelinelike}
Let $||\cdot||$ be a norm whose unit circle contains an $L^2$ origin arc. Then, for each $n$, there exist infinitely many (after scaling and translating) line-like configurations of size $n$ in $||\cdot||$, which satisfy the property that no three points lie on a common line. 
\end{cor}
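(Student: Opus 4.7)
The plan is to observe that the construction given in the proof of Theorem \ref{thm:circlelinelike} already delivers line-like configurations in which no three points lie on a common line, so no additional perturbation argument (unlike that used in Corollary \ref{cor:linelinelike}) is needed. Indeed, the construction simultaneously enforces the no-three-collinear condition as a free byproduct of its structure.

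Specifically, part (1) of the proof of Theorem \ref{thm:circlelinelike} establishes that the $n$ points $P_1, \ldots, P_n$ all lie on a common $L^2$ circle. A standard fact from Euclidean geometry is that any straight line meets an $L^2$ circle in at most two points. Since the $n$ distinct points $P_1, \ldots, P_n$ lie on such a circle, no three of them can be collinear. This is the only property that needs to be verified beyond what Theorem \ref{thm:circlelinelike} already supplies.

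The infinitely many configurations, after scaling and translating, arise exactly as in Theorem \ref{thm:circlelinelike}: they are parameterized by $\epsilon \in (0,(\theta_2-\theta_1)/(n-2)]$. For distinct choices of $\epsilon$, the points are equally spaced on $L^2$ circles of different radii, so the ratios $|P_1 P_3|/|P_1 P_2|$ (measured in either $||\cdot||$ or $L^2$) differ. Consequently, the resulting configurations cannot be obtained from one another by a scaling followed by a translation.

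I do not foresee any substantive obstacle: the entire argument reduces to attaching one geometric observation (lines meet $L^2$ circles in at most two points) to the existing construction. The only point requiring mild care is confirming that different values of $\epsilon$ yield genuinely inequivalent configurations modulo scaling and translation, and this follows directly from the dependence of the circumradius of the common $L^2$ circle on $\epsilon$.
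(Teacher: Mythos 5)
Your proposal is correct and matches the paper's own argument: the paper likewise proves the corollary by repeating the construction of Theorem \ref{thm:circlelinelike} and noting that since $P_1, \dots, P_n$ lie on a common $L^2$ circle, no three can be collinear. Your additional check that distinct values of $\epsilon$ give configurations inequivalent under scaling and translation (via the $\epsilon$-dependent chord ratio) is a small, valid refinement of the ``infinitely many'' claim that the paper leaves implicit.
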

\begin{proof}
Repeat the proof of Theorem \ref{thm:circlelinelike}. Because $P_1, \dots, P_n$ lie on a common $L^2$ circle, it follows that no three of $P_1, \dots, P_n$  lie on a common line. 
\end{proof}


\subsection{Line-like configurations in $L^p$, $1 < p < \infty$}\label{section:lplinelike}

We have numerically searched for line-like configurations in $L^p$. Of course as we will see in Theorem \ref{thm:crescentgeneral}, there are line-like configurations of size $n=4$. Whether these configurations may be extended to include a 5th point is a question of intersections of three $L^p$ balls, each ball given by one of the three previously specified distances. Numerically searching for such a configuration, we found no positive results for $p\neq2$, with arbitrarily small error as $p \to 2$. We employed two approaches in our search.

The first approach makes the ansatz that, if a line-like configuration were to exist in $L^p$, it would behave as $L^2$ and consist of $n$ equally spaced points along a unit ball. This provides a tremendous amount of structure to the potential collections of points, and we may naturally represent the location of $n$ points on the $L^p$ unit ball using $n$ angles $0 \leq t_i < 2\pi$ for $1 \leq i \leq n$ and the map $f:\mathbb{R} \to \mathbb{R}^2$ defined by $f(t)=\{\cos(t)^{\frac{2}{p}},\sin(t)^{\frac{2}{p}} \}$. We may arbitrarily renumber our $t_i$ so that they correspond to the ordering induced by a line-like configuration. Fixing a value on $t_1$ determines the location of the first point, and specifying $t_2$ provides the first order distance between the points $f(t_1)$ and $f(t_2)$. From these two values, $t_i$ for $i \geq 3$ are determined; $t_i$ corresponds to the unique point on the $L^p$ ball so that $d(f(t_i),f(t_{i-1}))=d(f(t_1),f(t_2))$ and $t_i \neq t_{i-2}$. This reduces our problem to a numerical search on two bounded variables, $t_1$ and $t_2$. Once the first order distance has determined the $t_i$, we may check the higher order distances to see if we have obtained a line-like configuration. This does not always produce a line-like configuration (cf. Example \ref{ex:lplinelike}). Numerically, it appears that this never produces a  line-like configuration, regardless of our choice of $t_1,t_2$, for $n\geq 5$ points, although the discrepancy in higher order distances goes to $0$ as $p \to 2$, as one might expect.

The second approach relaxes our ansatz but is computationally more intense. If we do not assume that the points lie on an $L^p$ ball, we may still specify the location of $n$ points using angles $t_1,\dots t_{n-1}\in[0,2\pi)$ and distance $d>0$. Then letting $x_0=\{0,0\}$, define $x_i=x_{i-1}+d \cdot f(t_i)$. These ensure that the first order distances are correct; then we may numerically compute the higher order distances and check for a crescent configuration. This algorithm must search over $n$ variables, and we were unsuccessful in finding configurations.

\begin{example}\label{ex:lplinelike}
Let $1 < p < \infty$. Consider the four points 
\[ x_1=(0,1),\ x_2= \left( \frac{1}{2^{1/p} }, \frac{1}{2^{1/p} } \right),\ x_3=(1,0),\ x_4= \left(\frac{1}{2^{1/p} }, -\frac{1}{2^{1/p} } \right). \]
These points lie on the $L^p$ circle of radius 1 centered at the origin. We compute 
\[ d_p (x_1,x_2)=d_p (x_2,x_3)=d_p (x_3,x_4).\] 
Also, $d(x_1,x_3)=2^{1/p}$ and $d(x_2,x_4)=(2^{p-1})^{1/p}=2^{1-1/p}$. Thus $d_p (x_1, x_3) = d_p(x_2, x_4)$ if and only if $p = 2$. 
\end{example}


\section{Classification of line-like crescent configurations in $L^\infty$}\label{section4}

In this section, we prove a structural result about line-like configurations in $L^\infty$. Specifically, we show that every line-like configuration of size $n \geq 7$ in $L^\infty$ satisfies at least one of the following three properties.
\begin{enumerate}
    \item Three points lie on a common line. 
    \item Four points lie on a common $L^\infty$ circle. 
    \item The set of $n$ points is a perpendicular perturbation of a horizontal or vertical line, i.e., has very similar structure to a set of $n$ equally spaced points on a horizontal or vertical line.
\end{enumerate}

This result is significant in that it is a ``crescent-type'' result. Rephrased, Erd\H{o}s' conjecture claims the following: There exists some $N$ for which, for all $n \geq N$, if a set of $n$ points satisfies the property that for each $1 \leq i \leq n - 1$ there exists a distance which occurs exactly $i$ times, then three points lie on a common line or four points lie on a common circle. We have proven the following: For all $n \geq 7$, if a set of $n$ points forms a line-like configuration in the $L^\infty$ norm, then three points lie on a common line, four points lie on a common $L^\infty$ circle, or the set of points is a perpendicular perturbation in $||\cdot||$. 


\subsection{Perpendicular perturbations and line-like crescent configurations}
In Section \ref{section:linelinelike}, we provide constructions of infinitely many line-like configurations of arbitrary size under any non-strictly convex norm $||\cdot||$. Note that each of these constructions has a simple structure---namely, it is a \textit{perpendicular perturbation} in $||\cdot||$. 

\begin{defi}\label{def:perp_perturb}
For each $n$, let $P_1, \dots, P_n$ and $Q_1, \dots, Q_n$ be points in the plane.
\begin{enumerate}
    \item We say that $P_1, \dots, P_n$ is a \textbf{perpendicular perturbation of $Q_1, \dots, Q_n$} if the lines $\overleftrightarrow{P_i Q_i}$ are parallel for all $1 \leq i \leq n$. (In other words, $P_1, \dots, P_n$ is a perpendicular perturbation of $Q_1, \dots, Q_n$ if there exists a line $\ell$ so that for all $1 \leq i \leq n$, $P_i$ and $Q_i$ are mapped to the same point when projected onto $\ell$.)
    \item We say that $P_1, \dots, P_n$ is a \textbf{perpendicular perturbation of $\ell$} if there exist equally spaced points $Q_1, \dots, Q_n$ on $\ell$ so that $P_1, \dots, P_n$ is a perpendicular perturbation of $Q_1, \dots, Q_n$. 
    \item Let $||\cdot||$ be a non-strictly convex norm, and for some $k$, let $\ell_1, \dots, \ell_k$ be lines which contain each of the line segments in the unit circle of $||\cdot||$. Let $\ell_i'$ be a line perpendicular to $\ell_i$ for each $1 \leq i \leq n$. We say that $P_1, \dots, P_n$ is a \textbf{perpendicular perturbation in $||\cdot||$} if $P_1, \dots, P_n$ is a perpendicular perturbation of $\ell_i'$ for some $1 \leq i \leq n$. 
\end{enumerate}
\end{defi}

\begin{exa}$ $
    \begin{enumerate}
        \item The set of points $\{ (1,1), (2,1/2), \dots, (n,1/n) \}$ (cf. Example \ref{ex:loobadlinelike}) is a perpendicular perturbation of the set of points $\{ (1,0), (2,0), \dots, (n,0) \}$, a perpendicular perturbation of the $x$-axis, and a perpendicular perturbation in $L^\infty$. 
        \item Let $||\cdot||$ be a non-strictly convex norm. Then every line-like configuration constructed by Theorem \ref{thm:linelinelike} is a perpendicular perturbation in $||\cdot||$.
    \end{enumerate}
\end{exa}

However, the following example shows that for all $n \geq 3$, there exist line-like configurations of size $n$ in $L^\infty$ which are not perpendicular perturbations in $L^\infty$.  

\begin{exa}\label{ex:non_perp_pert}
Fix $n \geq 3$. If $n = 2k + 1$ for $k \geq 1$, consider the set of $2k + 1$ points 
\[ \{ (0,0), (1,a), (1+a, 1+a), (2+a, 1+2a), (2+2a, 2+2a), \dots, ( k(1+a), k(1+a) ) \}\]
for some $0 < a < 1$. If $n = 2k$ for $k \geq 2$, consider the above set with the last point removed. The reader can check that this set of points forms a line-like configuration in $L^\infty$. However this set of points is not a perpendicular perturbation in $L^\infty$. To be a perpendicular perturbation in $L^\infty$, this set of points must be a perpendicular perturbation of a horizontal or vertical line. But the $x$-coordinates $0,1,1+a$ and the $y$-coordinates $0,a,1+a$ of the first three points respectively are not equally spaced, because $0 < a < 1$.
\end{exa}

Even though the set from Example \ref{ex:non_perp_pert} is not a perpendicular perturbation, its structure is similar to that of a perpendicular perturbation because it contains many points on a common line. Specifically, the points
\[ (0,0), (1+a, 1+a), \dots, ( \lfloor (n-1)/2 \rfloor (1+a), \lfloor (n-1)/2 \rfloor (1+a) ) \]
are equally spaced on a common line. 

When studying crescent configurations, we require that the points lie in some notion of general position in order to omit trivial configurations (cf. Section \ref{section:linelikedef}). Similarly, we omit trivial examples of line-like configurations by introducing \textit{line-like crescent configurations:}

\begin{defi}\label{def:line_like_crescent}
Fix a norm $||\cdot||$. A set of $n$ points is said to form a \textbf{line-like crescent configuration in $||\cdot||$} if the following three conditions hold.
\begin{enumerate}
    \item The $n$ points form a line-like configuration in $||\cdot||$.
    \item No three points lie on a common line.
    \item No four points lie on a common $||\cdot||$ circle. 
\end{enumerate}
\end{defi}

\begin{exa}\label{ex:crescent_line_like}
$ $
\begin{enumerate}
    \item For each $n$, $\{ (1,1), (2,1/2), \dots, (n,1/n)\}$ (cf. Example \ref{ex:loobadlinelike}) forms a line-like crescent configuration in $L^\infty$. 
    \item The set of points in Example \ref{ex:non_perp_pert} is a line-like crescent configuration of size $n$ if and only if $n \leq 4$. When $n \geq 5$, the set of points is not a line-like crescent configuration because the points $(0,0), (1+a, 1+a), (2+2a, 2+2a)$ lie on a common line.
\end{enumerate}
\end{exa}

We claim that there are only finitely many such exceptions in the following sense.

\crescentlinfty*

The following example shows that the $n \geq 7$ bound in the statement of Theorem \ref{thm:crescentlinfty} is tight. 

\begin{exa}\label{ex:counterexample_crescentlinfty}
For $3 \leq n \leq 6$, there exist line-like crescent configurations in $L^\infty$ which are not perpendicular perturbations in $L^\infty$, namely 
\begin{align*}
    &\{ (0,0), (1,a), (1+a, 1+a) \}\\
    &\{ (0,0), (1,a), (1+a, 1+a), (2+a, 1+2a) \}\\
    &\{ (0,0), (1,a), (1+b, 1+a), (2+b, 1+a+b), (2+a+b, 2+a+b) \} \\
    &\{ (0,0), (1,a), (1+b, 1+a), (2+b, 1+a+b), (2+2b, 2+a+b), (3+2b, 2+2a+b) \} 
\end{align*}
for $0 < a < b < 1$. For each (ordered) set of points, note that the differences between consecutive points alternate between $(1,c)$ and $(c,1)$, for $c \in \{ a, b \}$. Using notation from Section \ref{section:crescentlinfty_notation}, we say that these line-like configurations are type $xy$, $xyx$, $xyxy$, and $xyxyx$ respectively (cf. Definition \ref{def:type}). See also Lemma \ref{lemma:bad_xyxy}, which states that a line-like crescent configuration of size $n$ and type $xyxy\cdots$ must satisfy $n \leq 6$. 
\end{exa}

The proof of Theorem \ref{thm:crescentlinfty} is structured as follows. In Section \ref{section:crescentlinfty_notation}, we introduce notation used in the proof of Theorem \ref{thm:crescentlinfty}. In Section \ref{section:linfty_theorem:proof}, we state Lemmas \ref{lemma:bad_BB}, \ref{lemma:bad_xbx'y}, \ref{lemma:bad_xx'}, \ref{lemma:bad_xyxy'}, \ref{lemma:bad_xyxy}, \ref{lemma:bad_xyx'y'}, \ref{lemma:bad_BBB}, \ref{lemma:bad_xBx'_xBy}, and \ref{lemma:bad_xBbx'y}. Then we use these lemmas to prove Theorem \ref{thm:crescentlinfty}. In Section \ref{section:lemmas_proofs}, we prove the lemmas stated and used in Section \ref{section:linfty_theorem:proof}. 


\subsection{Types, realizability, $m$-extendability}\label{section:crescentlinfty_notation}
Throughout the rest of this section, we exclusively use the $L^\infty$ norm, and omit the specification ``in $L^\infty$'' when referring to distances, line-like configurations, and so on. Let $p = (p_x, p_y)$ and $q = (q_x, q_y)$ be points in $\R^2$. In this section, we denote their distance by $d(p,q) := d_{L^\infty}(p, q) = \max \{ |p_x - q_x|, |p_y - q_y| \}$. For points $p_1, \dots, p_n$, let $[p_1, \dots, p_n]$ denote the ordered list of these points. 

\begin{defi}
Let $[p_1, \dots, p_n]$ be a line-like configuration. For $1 \leq i \leq n - 1$, the \textbf{$\bm{i}$\tss{th} order distance} is given by $d(p_1, p_{1 + i} ) = d(p_2, p_{2 + i}) = \dots = d(p_{n - i}, p_{n} )$. When the line-like configuration $[p_1, \dots, p_n]$ is clear, we denote its $i$\tss{th} order distance by \textbf{$d_i$}. 
\end{defi}

Next we define the \textit{type} of a line-like configuration. 

\begin{defi}
Let $p = (p_x, p_y)$ and $q = (q_x, q_y)$ be distinct points. 
\begin{enumerate}
    \item If $|p_x - q_x| > |p_y - q_y|$ and $q_x > p_x$, we say that $[p,q]$ is \textbf{type $\bm{x}$}.
    \item If $|p_x - q_x| > |p_y - q_y|$ and $p_x > q_x$, we say that $[p,q]$ is \textbf{type $\bm{x'}$}.
    \item If $|p_y - q_y| > |p_x - q_x|$ and $q_y > p_y$, we say that $[p,q]$ is \textbf{type $\bm{y}$}.
    \item If $|p_y - q_y| > |p_x - q_x|$ and $p_y > q_y$, we say that $[p,q]$ is \textbf{type $\bm{y'}$}.
    \item If $|p_x - q_x| = |p_y - q_y|$, $q_x > p_x$ and $q_y > p_y$, we say that $[p,q]$ is \textbf{type $\bm{b_{xy}}$}.
    \item If $|p_x - q_x| = |p_y - q_y|$, $p_x > q_x$ and $q_y > p_y$, we say that $[p,q]$ is \textbf{type $\bm{b_{x'y}}$}.
    \item If $|p_x - q_x| = |p_y - q_y|$, $p_x > q_x$ and $p_y > q_y$, we say that $[p,q]$ is \textbf{type $\bm{b_{x'y'}}$}.
    \item If $|p_x - q_x| = |p_y - q_y|$, $q_x > p_x$ and $p_y > q_y$, we say that $[p,q]$ is \textbf{type $\bm{b_{xy'}}$}.
\end{enumerate}
We write $T := \{ x, x', y, y', b_{xy}, b_{x'y}, b_{x'y'}, b_{xy'} \}$.
\end{defi}

\begin{defi}\label{def:type}$ $
\begin{enumerate}
\item Let $[p_1, \dots, p_n]$ be a line-like configuration. The \textbf{type} of $[p_1, \dots, p_n]$ is a string $a_1 a_2 \dots a_{n - 1}$, with $a_i \in T$, where for all $1 \leq i \leq n - 1 $ we have that $[p_i, p_{i+1}]$ is type $a_i$. 
\item Let $a_1 a_2 \dots a_{n-1}$ and $c_1 c_2 \dots c_{k-1}$ be types. We say that the type $a_1 a_2 \dots a_{n-1}$ \textbf{contains} the type $c_1 c_2 \dots c_{k-1}$ if $c_1 c_2 \dots c_{k-1}$ is a substring of $a_1 a_2 \dots a_{n-1}$.  
\item Let $a_1 a_2 \dots a_{n-1}$ be a type. We say that the type $a_1 a_2 \dots a_{n-1}$ has \textbf{length} $n-1$. 
\end{enumerate}
\end{defi}

\begin{defi} Let $k \geq 2$. 
\begin{enumerate}
\item We say that the type $a_1 a_2 \dots a_{k-1}$ is \textbf{realizable} if there exists a line-like crescent configuration $[p_1, \dots, p_k]$ with type $a_1 a_2 \dots a_{k- 1 }$. 
\item Let $m \geq 1$. We say that $a_1 a_2 \dots a_{k-1}$ is \textbf{$m$-extendable} if there exist $a_k,a_{k+1}, \dots, a_{k + m - 1} \in T$ so that $a_1 a_2 \dots a_{k-1 } a_k a_{k+1} \dots a_{k + m - 1}$ is realizable. 
\end{enumerate}
\end{defi}

We conclude with a remark crucial to the logic of the proofs in Sections \ref{section:linfty_theorem:proof} and \ref{section:lemmas_proofs}.

\begin{rem}
Throughout the proofs of Theorem \ref{thm:crescentlinfty} and related lemmas, we frequently make use of the following symmetries of $L^\infty$. 
\begin{enumerate}
    \item The following are isometries of $L^\infty$: reflection about a horizontal line, reflection about a vertical line, reflection about a line with slope $\pm 1$. 
    \item A type $a_1 a_2 \dots a_{n-1}$ is realizable if and only if $a_{n-1} a_{n-2} \dots a_1$ is realizable. 
\end{enumerate}
The following are simple example arguments making use of these symmetries. 
\begin{itemize}
    \item Lemma \ref{lemma:bad_xx'} states that $xx'$ is not 2-extendable. By symmetry (1), Lemma \ref{lemma:bad_xx'} is equivalent to the statement that any one of $x'x$, $yy'$, and $y'y$ is not 2-extendable. 
    \item Symmetry (1) implies that type $xx'$ is symmetric about the $x$-axis in the following sense: there exists a natural bijection between sets of points realizing $xx'y$ and sets of points realizing $xx'y'$. Thus $xx'y$ is 1-extendable if and only if $xx'y'$ is. 
    \item Lemma \ref{lemma:bad_xx'} states that $xx'$ is not 2-extendable. By symmetry (2), $x'x$ is not 2-extendable. By symmetry (1), any type which contains $xx'$ as a substring must be of the form $axx'b$, where $a,b \in T \cup \{ \epsilon \}$. (Here, $\epsilon$ denotes the empty string.) 
\end{itemize}
In particular, when we write ``$a_1 a_2 \dots a_{n-1}$ (and reflections)'', we mean the collection of types equivalent to $a_1 a_2 \dots a_{n-1}$ under symmetries (1) and (2). For example, ``$xb_{xy}$ (and reflections)'' refers to the types $x b_{xy}$, $x b_{xy'}$, $x' b_{x'y}$, $x' b_{x'y'}$, $y b_{xy}$, $y b_{x'y}$, $y' b_{xy'}$, $y' b_{x'y'}$, $ b_{xy}x$, $ b_{xy'} x$, $ b_{x'y} x'$, $ b_{x'y'} x'$, $ b_{xy} y$, $b_{x'y} y$, $ b_{xy'} y'$, $ b_{x'y'} y'$.
\end{rem}


\subsection{Lemma statements and proof of Theorem \ref{thm:crescentlinfty}}\label{section:linfty_theorem:proof}

First we state the lemmas used in the proof of Theorem \ref{thm:crescentlinfty}. Their proofs are given in Section \ref{section:lemmas_proofs}. 

\begin{lemma}\label{lemma:bad_BB}
The types $b_{xy} b_{xy}$ and $b_{xy} b_{x'y'}$ (and reflections) are not realizable. 
\end{lemma}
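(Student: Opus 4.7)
The plan is to exploit the rigidity of the displacement vector associated to each $b$-type. Because the configuration is line-like, the first-order $L^\infty$ distance $d_1 > 0$ is a single fixed quantity, and every consecutive pair $[p_i, p_{i+1}]$ satisfies $\max\{|p_{i+1,x} - p_{i,x}|,\,|p_{i+1,y} - p_{i,y}|\} = d_1$. For a pair of $b$-type, the two coordinate differences are equal in absolute value, which pins the displacement down to exactly one of $(\pm d_1, \pm d_1)$, with the signs determined by the specific subtype. In particular, $b_{xy}$ forces the displacement $(d_1, d_1)$, and $b_{x'y'}$ forces $(-d_1, -d_1)$.

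With this in hand, I would dispatch each case by just summing the two consecutive displacements. For $b_{xy} b_{xy}$, the displacements $(d_1,d_1)$ and $(d_1,d_1)$ give $p_3 - p_1 = (2d_1, 2d_1)$, so $p_1, p_2, p_3$ lie on a common line of slope $1$; this violates condition (2) of Definition \ref{def:line_like_crescent}. For $b_{xy} b_{x'y'}$, the displacements $(d_1, d_1)$ and $(-d_1,-d_1)$ cancel and yield $p_3 = p_1$, contradicting that the points of a line-like configuration are distinct. In both cases the offending behaviour is triggered already by the first three points, so no extension to a longer crescent configuration can rescue the situation.

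To cover the clause \textbf{and reflections}, I would appeal to the $L^\infty$ symmetries collected just before the statement: reflection across a horizontal line, a vertical line, or a line of slope $\pm 1$, together with reversal of the ordering of the configuration. Each such isometry permutes the four $b$-subtypes while preserving the line-like crescent structure, so non-realizability of the two representatives $b_{xy} b_{xy}$ and $b_{xy} b_{x'y'}$ automatically propagates to their full symmetry orbits, namely $\{b_{xy}b_{xy},\,b_{x'y'}b_{x'y'},\,b_{xy'}b_{xy'},\,b_{x'y}b_{x'y}\}$ and $\{b_{xy}b_{x'y'},\,b_{x'y'}b_{xy},\,b_{xy'}b_{x'y},\,b_{x'y}b_{xy'}\}$.

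I do not expect a real obstacle here; the argument is essentially one of signed vector bookkeeping. The only thing worth being careful about is to make explicit the step that ``consecutive $b$-displacement has $L^\infty$ norm $d_1$ and equal coordinate magnitudes forces displacement $\in \{\pm d_1\} \times \{\pm d_1\}$'', since this is what reduces both subcases to a one-line check (collinearity on a diagonal in the first case, coincidence of $p_1$ and $p_3$ in the second).
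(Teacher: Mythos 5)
Your proof is correct and matches the paper's own argument: the paper likewise fixes the coordinates forced by the $b$-types (with $d_1$ normalized to $1$), observing that $b_{xy}b_{xy}$ yields the collinear points $(0,0),(1,1),(2,2)$ and $b_{xy}b_{x'y'}$ yields the coincident points $(0,0),(1,1),(0,0)$, with reflections handled by the stated $L^\infty$ symmetries. Your only cosmetic differences are keeping $d_1$ general and writing out the symmetry orbits explicitly.
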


\begin{lemma}\label{lemma:bad_xbx'y}
The type $x b_{x'y}$ (and reflections) is not 2-extendable. 
\end{lemma}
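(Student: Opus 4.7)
The plan is to reduce to the single representative $xb_{x'y}$ using the $D_4$-symmetries of $L^\infty$ together with order-reversal, and then to suppose for contradiction that $xb_{x'y}$ is 2-extendable, so that a line-like crescent configuration $[p_1,\ldots,p_5]$ of type $xb_{x'y}a_3a_4$ exists. After translating and rescaling so that $d_1=1$ and $p_1=(0,0)$, the type constraints on the first two gaps (together with $d(p_1,p_2)=d(p_2,p_3)=1$) force
\[
p_2=(1,\delta),\qquad p_3=(0,1+\delta),
\]
with $|\delta|<1$ (from type $x$) and $\delta\neq 0$ (so that $d_2\neq d_1$); consequently $d_2=1+\delta$.

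Next I would enumerate the candidates for $p_4$ by intersecting the two $L^\infty$-square boundaries $\partial B(p_3,1)$ and $\partial B(p_2,1+\delta)$. Checking the sixteen side-to-side incidences leaves exactly two points,
\[
A=(1,\,1+2\delta)\quad\text{and}\quad B=(-\delta,\,\delta),
\]
with $[p_3,p_4]$ of type $x$ and type $y'$ respectively, so the argument splits into two cases.

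In Case~A ($p_4=A$) the requirement $d_3=d(p_1,p_4)=\max(1,|1+2\delta|)\notin\{d_1,d_2\}$ forces $\delta>0$ and $d_3=1+2\delta$. To locate $p_5$ I would intersect $\partial B(p_4,1)$, $\partial B(p_3,1+\delta)$, and $\partial B(p_2,1+2\delta)$. Restricted to their common closed-ball region $[0,1+\delta]\times[\delta,1+3\delta]$, these three boundaries collapse to $\{x=0\}\cup\{y=\delta\}$, $\{x=1+\delta\}$, and $\{y=1+3\delta\}$ respectively. The three nonempty pairwise intersections are $(1+\delta,\delta)$, $(0,1+3\delta)$, and $(1+\delta,1+3\delta)$, and a direct check shows each lies on the third set only when $\delta\in\{-\tfrac{1}{2},-1\}$, outside the permitted range $(0,1)$. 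Hence no $p_5$ exists in Case~A.

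Case~B ($p_4=B$) carries the structural heart of the argument: I would show that $p_1,p_2,p_3,p_4$ already lie on a common $L^\infty$ circle. Their axis-aligned bounding box is a square of side $\max(1,1+\delta)$; explicitly, center $\bigl(\tfrac{1-\delta}{2},\tfrac{1+\delta}{2}\bigr)$ with radius $\tfrac{1+\delta}{2}$ when $\delta>0$, and center $\bigl(\tfrac{1}{2},\tfrac{1}{2}+\delta\bigr)$ with radius $\tfrac{1}{2}$ when $\delta<0$. In either case a one-line verification shows each $p_i$ attains the claimed $L^\infty$ distance to the center. This already violates the no-four-on-a-common-$L^\infty$-circle condition of Definition~\ref{def:line_like_crescent}, so $[p_1,\ldots,p_4]$ is not a line-like crescent configuration, and no choice of $p_5$ can rescue it. Combining both cases contradicts the assumed 2-extendability. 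The main obstacle I anticipate is the boundary bookkeeping in Case~A, where omitting one of the sides of the three squares when reducing to the common region would produce a spurious $p_5$; by contrast Case~B hinges on the clean cocircularity observation, which is where the genuine geometric content of the lemma lies.
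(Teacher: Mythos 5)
Your proof is correct, but it takes a genuinely different route from the paper's. The paper argues combinatorially at the level of types: it first notes $D_2(xb_{x'y}) < 2$ to eliminate third symbols $x', y, b_{xy}, b_{x'y'}$, invokes Lemma~\ref{lemma:bad_BB} to eliminate $b_{x'y}$ and $b_{xy'}$, and is left with exactly the two cases you find ($xb_{x'y}x$ and $xb_{x'y}y'$); it then kills $xb_{x'y}xy$ and $xb_{x'y}xy'$ by third-order-distance contradictions, and disposes of $xb_{x'y}y'$ by precisely your cocircularity observation (the paper's circle ``with corners $(-a,0)$ and $(1,1+a)$'' is your center $\bigl(\tfrac{1-\delta}{2},\tfrac{1+\delta}{2}\bigr)$, radius $\tfrac{1+\delta}{2}$). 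Your replacement of the type-pruning stage by a direct enumeration of the intersection $\{(1,1+2\delta),(-\delta,\delta)\}$ of the two $L^\infty$ circles is self-contained --- it needs neither Lemma~\ref{lemma:bad_BB} nor the $d_2<2$ bookkeeping --- and your Case~A handles all candidate fifth symbols simultaneously by showing the triple intersection of circles is empty, where the paper first prunes to $t\in\{y,y'\}$ and then checks two third-order-distance contradictions. What the paper's method buys is reusability: its lemma-based eliminations recur throughout Section~\ref{section:lemmas_proofs}, whereas your geometric enumeration is local to this configuration but more uniform.

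Two slips in Case~A, both harmless. The bottom side of the ball centered at $p_4=(1,1+2\delta)$ with radius $1$ is $\{y=2\delta\}$, not $\{y=\delta\}$, and the common region is $[0,1+\delta]\times[2\delta,1+3\delta]$; with this correction the three pairwise intersection points become $(1+\delta,2\delta)$, $(0,1+3\delta)$, $(1+\delta,1+3\delta)$, and each lies on the remaining set only when $\delta=-1$ (your spurious value $-\tfrac{1}{2}$ disappears), still outside the permitted range $(0,1)$, so the conclusion stands. You should also state explicitly why $\delta\neq 0$: a line-like configuration on at least three points forces $d_2\neq d_1$, which you use implicitly when writing $d_2=1+\delta$.
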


\begin{lemma}\label{lemma:bad_xx'}
There do not exist $s,t \in \{ x, x', y, y' \}$ so that $xx'st$ is realizable.
\end{lemma}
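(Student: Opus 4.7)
The plan is a direct case analysis on the pair $(s,t)$. After normalizing by an $L^\infty$ isometry, I would write $p_1 = (0,0)$ and set $p_2 - p_1 = (a, b)$ with $a > 0$ and $|b| < a$ (for type $x$), and $p_3 - p_2 = (-a, c)$ with $|c| < a$ (for type $x'$), so $p_3 = (0,\, b + c)$. By reflecting across a horizontal line if necessary, I may further assume $b + c > 0$, so that $d_1 = a$ and $d_2 = d(p_1, p_3) = b + c$. The line-like condition $d_2 > d_1$ forces $b + c > a$, and combined with $|b|, |c| < a$ this forces $b, c \in (0, a)$ with $b + c \in (a, 2a)$. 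The key structural fact driving the rest of the proof is that $p_1$ and $p_3$ share an $x$-coordinate, so $p_3 - p_1$ is purely vertical.

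First I would dispatch three of the four choices for $s$ already at the four-point level. If $s = x'$, then $p_4 - p_2$ has $x$-component $-2a$, so $d(p_2, p_4) \geq 2a > d_2$, a contradiction. If $s = y$, then $p_4 - p_2$ has $y$-component $a + c > b + c = d_2$ (using $b < a$), again a contradiction. If $s = y'$, the constraints $d(p_3, p_4) = a$ and $d(p_2, p_4) = d_2$ pin down $p_4 = (a - b - c,\, b + c - a)$; a direct computation gives $d(p_1, p_4) = b + c - a < b + c = d_2$, contradicting the line-like requirement $d(p_1, p_4) = d_3 > d_2$.

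The only remaining case is $s = x$. Here the distance constraints force $p_4 = (a,\, 2b + c)$, since the alternative solution $v = -b - 2c$ for $p_4 - p_3 = (a, v)$ is excluded by $|v| < a$ (indeed $b + 2c > a + c > a$). Consequently $d_3 = d(p_1, p_4) = 2b + c$. Now I would analyze $p_5$: writing $p_5 - p_4 = (u', v')$ with $\max(|u'|, |v'|) = a$, the condition $d(p_2, p_5) = d_3 = 2b + c$ forces $|v' + b + c| = 2b + c$ (because $|u'| \leq a < 2b + c$), whose solution $v' = -3b - 2c$ is excluded by $|v'| \leq a$, leaving $v' = b$ and hence $|u'| = a$, i.e., $t \in \{x, x'\}$. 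Finally, I would impose $d(p_3, p_5) = d_2 = b + c$ using $p_5 - p_3 = (u' + a,\, 2b)$: the case $t = x$ (so $u' = a$) gives $d(p_3, p_5) = 2a > b + c$, a contradiction; the case $t = x'$ (so $u' = -a$) forces $2b = b + c$, i.e., $b = c$, which places $p_1, p_3, p_5$ on the vertical line $x = 0$ and violates the no-three-collinear condition.

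No single step is difficult; the principal obstacle is the bookkeeping across the candidate pairs $(s,t)$ and making sure that degenerate subcases (especially three collinear points arising from the shared $x$-coordinate of $p_1$ and $p_3$) are not overlooked. The cleanest route is to exploit $d_2 > d_1$ at the outset to deduce $b, c > 0$---which suppresses sign cases---and then to impose the order-distance constraints in the sequence $d(p_2, p_4)$, $d(p_1, p_4)$, $d(p_2, p_5)$, $d(p_3, p_5)$, each of which introduces essentially one new scalar equation that progressively rigidifies the configuration.
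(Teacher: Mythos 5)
Your proof has a genuine gap: it assumes the order distances are monotone ($d_2 > d_1$, and later $d_3 > d_2$), but the paper's definition of a line-like configuration does not include monotonicity. A line-like configuration is defined via a distance-graph \emph{isomorphism}, which preserves only the pattern of equalities; hence the order distances $d_1, d_2, \dots$ are merely pairwise distinct, not increasing. Indeed line-like crescent configurations with $d_2 < d_1$ exist: for instance $(0,0)$, $(1,0.3)$, $(0,0.2)$ has type $xx'$ with $d_1 = 1$, $d_2 = 0.2$, and the main theorem's proof invokes this lemma inside the case $d_2 < 2$ with no lower bound on $d_2$, so the lemma must cover the regime $d_2 < d_1$. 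In your coordinates this is the regime $0 < b+c < a$, which you discard at the outset; there $b$ and $c$ may have mixed signs, and your subsequent rigidification collapses: in the $s = x$ case the alternative root $v = -b-2c$ is no longer excluded (e.g., $b = 0.5a$, $c = -0.3a$ gives $|v| = 0.1a < a$), and in the $s = y'$ case $p_4$ is not pinned down (the equation $\max\{|u-a|,\, a-c\} = b+c$ can hold with $a - c = b + c$ and a one-parameter family of $u$). Moreover, even in your regime $b + c > a$, the $s = y'$ case has no contradiction at the four-point level: $d_3 = b+c-a$ is distinct from $d_1 = a$ (since $b+c \neq 2a$) and from $d_2 = b+c$, so nothing in the definition is violated; the claimed violation of ``$d_3 > d_2$'' is exactly the unjustified monotonicity. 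The paper is forced to extend to the fifth point and treat $xx'y'y$, $xx'y'x$, $xx'y'x'$ separately, and in the low-$d_2$ regime of $xx'y'y$ the contradiction comes from four points lying on a common $L^\infty$ circle --- a tool your argument never touches, which is symptomatic of the missing cases.

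To be clear about what does survive: your dismissals of $s = x'$ (forced $x$-gap $2a > d_2$) and $s = y$ (forced $y$-gap $a + c > b + c$, using only $b < a$) are valid in both regimes, and your computations for $s = x$, $t \in \{x, x'\}$ (including the collinearity of $p_1, p_3, p_5$ when $b = c$) are correct \emph{within} the regime $b+c > a$. So the skeleton is salvageable, but as written the proof establishes the lemma only under an extra hypothesis ($d_1 < d_2 < d_3$) that the definition does not supply, and the genuinely hard half of the paper's argument --- the sign analysis and the circle obstructions when $d_2 < d_1$ --- is absent.
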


\begin{lemma}\label{lemma:bad_xyxy'}
$ $
\begin{enumerate}
    \item The type $xyxy'$ (and reflections) is not realizable.
    \item The type $xyx'y$ (and reflections) is not realizable.
\end{enumerate}
\end{lemma}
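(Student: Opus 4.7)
The plan is to set up explicit parameterized coordinates and reduce each part to a finite case analysis on the resulting distance equations. After translating $p_1$ to the origin and rescaling so that the first-order distance $d_1 = 1$, each step $p_{i+1} - p_i$ must have one coordinate equal to $\pm 1$ and the other in $(-1, 1)$, with the sign pattern dictated by the type string. For both parts, the goal becomes: show that no admissible choice of four parameters $a, b, c, d \in (-1, 1)$ satisfies all of the line-like equalities $d(p_i, p_{i+k}) = d_k$ for each valid $i, k$.

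For part (1), type $xyxy'$, the contradiction appears already at the third-order distance. With $p_1 = (0,0)$, $p_2 = (1, a)$, $p_3 = (1+b, 1+a)$, $p_4 = (2+b, 1+a+c)$, and $p_5 = (2+b+d, a+c)$, one computes $p_4 - p_1 = (2+b,\ 1+a+c)$, so $d(p_1, p_4) \geq 2 + b > 1$. On the other hand, $p_5 - p_2 = (1+b+d,\ c)$; splitting on the sign of $1+b+d$ shows $|1+b+d| < 2+b$, and of course $|c| < 1 < 2+b$. Hence $d(p_2, p_5) < 2 + b \leq d(p_1, p_4)$, contradicting $d(p_1, p_4) = d(p_2, p_5)$. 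Geometrically, the $x$-direction of $p_4 - p_1$ accumulates two full units while that of $p_5 - p_2$ suffers cancellation between the $x$ step and the following $y'$ step.

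For part (2), type $xyx'y$, the analogous direct estimate fails because the $x'$ step cancels one of the $x$ steps, so I would use the second-order constraint to narrow the parameter space first. Parameterize as $p_1 = (0,0)$, $p_2 = (1,a)$, $p_3 = (1+b, 1+a)$, $p_4 = (b, 1+a+c)$, $p_5 = (b+d, 2+a+c)$. The equations $\max(1+b, 1+a) = \max(1-b, 1+c) = \max(1-d, 1+c)$ split into two cases: (A) $d_2 = 1+c$ with $c = \max(a,b)$ and both $c \geq -b$ and $c \geq -d$; or (B) $b = d$, $c < -b$, and $\max(1+b, 1+a) = 1-b$, which forces $b \leq 0$ together with either $a = -b$ or $b = 0$. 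Substituting each scenario into $d(p_1,p_4) = d(p_2,p_5)$, where $p_4 - p_1 = (b,\ 1+a+c)$ and $p_5 - p_2 = (b+d-1,\ 2+c)$, and then analyzing which term achieves each maximum, one finds in every subcase that solving the resulting equation for the remaining free parameter forces it outside $(-1,1)$. The one borderline subcase, $b = d = 0$ inside Case (B), puts $p_1, p_4, p_5$ on the vertical axis, violating the no-three-collinear hypothesis of a line-like crescent configuration, so it is disqualified on that separate ground.

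The ``and reflections'' clause follows from the isometries of $L^\infty$ (reflections across horizontal, vertical, and $\pm 1$-slope lines) together with the reversal-of-sequence symmetry noted in Section 4.2. The main obstacle is the nested case analysis in part (2): each max equation splits into subcases depending on which term is largest, so ensuring every combination is addressed carefully is the most error-prone step and will require organized bookkeeping rather than a clever trick.
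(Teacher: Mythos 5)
Your part (1) is correct and is essentially the paper's own argument: from the coordinates, $d(p_1,p_4) = \max\{2+b,\ |1+a+c|\} \geq 2+b$, while $d(p_2,p_5) = \max\{|1+b+d|,\ |c|\} < 2+b$, so the third-order distances cannot agree.

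Part (2), however, rests on a false premise and leaves its decisive content unproven. You claim ``the analogous direct estimate fails because the $x'$ step cancels one of the $x$ steps,'' but the cancellation occurs only in the $x$-coordinate; in the $y$-coordinate the two $y$ steps accumulate. From your own parameterization, $p_4 - p_1 = (b,\ 1+a+c)$ and $p_5 - p_2 = (b+d-1,\ 2+c)$, and since $2+c > 1 > |b|$ and $2+c > |1+a+c|$ (the latter because $a < 1$ rules out $1+a+c \geq 2+c$, and $a + 2c > -3$ rules out $-(1+a+c) \geq 2+c$), one gets $d(p_2,p_5) \geq 2+c > \max\{|b|,\ |1+a+c|\} = d(p_1,p_4)$ immediately. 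This is precisely the paper's proof of part (2): the same one-line third-order estimate as part (1), read off the $y$-axis rather than the $x$-axis. Having missed it, you substitute a second-order case analysis whose critical step --- the assertion that ``in every subcase \ldots the remaining free parameter is forced outside $(-1,1)$'' --- is exactly what would need to be exhibited, so as written part (2) is a plan rather than a proof. (Minor additional points: your case split needs care at boundary configurations such as $c = -b$ with $b \neq d$, and your borderline subcase $b = d = 0$ is already excluded without the collinearity argument, since it forces $d_2 = 1 = d_1$, contradicting the distinctness of order distances in a line-like configuration.) The fix is immediate: the direct estimate you discarded actually works.
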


\begin{lemma}\label{lemma:bad_xyxy}
For some $n$, let $c_1 c_2 \dots c_{n-1}$ be a type with
\[ \begin{cases}
c_i = x & \text{if }i \equiv 1 \mod 2 \\
c_i = y & \text{if }i \equiv 0 \mod 2 \\
\end{cases}\]
If $c_1 c_2 \dots c_{n-1}$ (or reflections) is realizable, then $n \leq 6$. 
\end{lemma}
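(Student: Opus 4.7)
The plan is to argue by contradiction for the base case $n=7$ (type $xyxyxy$) and reduce larger $n$ to this case by restriction, with reflections of the type handled by the $L^\infty$ isometries from Section \ref{section:crescentlinfty_notation}. After normalizing $p_1=(0,0)$ and $d_1=1$, each type-$x$ step contributes a displacement $(1,\epsilon_k)$ and each type-$y$ step contributes $(\epsilon_k,1)$, with $|\epsilon_k|<1$ coming from the strict inequalities in the type definitions; this parametrizes the seven-point configuration by $\epsilon_1,\dots,\epsilon_6$. For every $i\in\{1,\dots,5\}$ the displacement $p_{i+2}-p_i$ has both coordinates of the form $1+\epsilon$ and both positive, so $d(p_i,p_{i+2})=1+\max(\epsilon_i,\epsilon_{i+1})$. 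Setting the common value $d_2=1+c$ forces $c=\max(\epsilon_i,\epsilon_{i+1})$ for each $i$, so every $\epsilon_k\leq c$, no two consecutive $\epsilon$'s can be strictly below $c$, and in particular $c<1$.

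The heart of the argument is the third-order step. Any three-step stretch is either $xyx$ or $yxy$, and in both cases
\[ d(p_i,p_{i+3}) \;=\; \max\bigl(\,2+\epsilon_{i+1},\;1+\epsilon_i+\epsilon_{i+2}\,\bigr), \]
with the middle index $i+1$ running over $\{2,3,4,5\}$ as $i$ runs over $\{1,2,3,4\}$. The key observation is the strict gap $1+2c<2+c$ (which holds because $c<1$), so the ``diagonal'' term $1+\epsilon_i+\epsilon_{i+2}\leq 1+2c$ can never attain $2+c$. Consequently the case $d_3<2+c$ is impossible: it would force both terms of each of the four equations to be $<2+c$, hence $\epsilon_2,\epsilon_3,\epsilon_4,\epsilon_5<c$, contradicting the second-order identity $\max(\epsilon_2,\epsilon_3)=c$. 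Therefore $d_3=2+c$, and the single-offset term in each equation must saturate, yielding $\epsilon_2=\epsilon_3=\epsilon_4=\epsilon_5=c$.

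Substituting, $p_4-p_2=(1+\epsilon_2,1+\epsilon_3)=(1+c,1+c)$ and $p_6-p_4=(1+\epsilon_4,1+\epsilon_5)=(1+c,1+c)$, so $p_2,p_4,p_6$ are collinear, contradicting the line-like crescent condition. This proves the base case. For $n\geq 8$, the first seven points of any $xyxy\cdots$ realization form a sub-line-like crescent configuration of type $xyxyxy$ (the line-like property and the no-three-collinear and no-four-cocircular conditions restrict to subsets), so the base case applies. Reflections of $xyxyxy$ reduce to the same argument via the $L^\infty$ isometries, e.g.\ sequence reversal for $yxyxyx$, axis reflections for the primed types, and the diagonal swap $x\leftrightarrow y$. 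The main obstacle is isolating the correct third-order comparison; once the gap $1+2c<2+c$ is in hand, the forced equality $\epsilon_2=\epsilon_3=\epsilon_4=\epsilon_5=c$ and the collinearity of $p_2,p_4,p_6$ follow immediately.
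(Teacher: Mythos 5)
Your proof is correct and follows essentially the same approach as the paper: parametrize the configuration by the step offsets, use the third-order distance equations (where the term $2+\epsilon_{\mathrm{middle}}$ strictly dominates the diagonal term $1+\epsilon_i+\epsilon_{i+2}$) to force the four middle offsets to be equal, and conclude that $p_2$, $p_4$, $p_6$ are collinear, contradicting the line-like crescent condition. The only cosmetic difference is that the paper first derives the equality of adjacent middle offsets from the five-point type $xyxy$ and applies it to overlapping windows of $xyxyxy$, while you work directly with seven points and additionally invoke the second-order identity $d_2 = 1+\max(\epsilon_i,\epsilon_{i+1})$; both arguments are the same computation at heart.
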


\begin{lemma}\label{lemma:bad_xyx'y'}
For some $n$, let $c_1 c_2 \dots c_{n-1}$ be a type with 
\[ \begin{cases}
c_i = x & \text{if }i \equiv 1 \mod 4 \\
c_i = y & \text{if }i \equiv 2 \mod 4 \\
c_i = x'& \text{if }i \equiv 3 \mod 4 \\
c_i = y'& \text{if }i \equiv 0 \mod 4
\end{cases}\]
If $c_1 c_2 \dots c_{n-1}$ (or reflections) is realizable, then $n \leq 5$. 
\end{lemma}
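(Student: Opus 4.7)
The plan is to reduce everything to showing that the single type $xyx'y'x$ (length $5$, so $n=6$) is not realizable as a line-like crescent configuration in $L^\infty$. This suffices because any realizable type following the pattern $xyx'y'xyx'y'\cdots$ of length at least $5$ contains $xyx'y'x$ as a prefix, and the first six points of any line-like crescent configuration of size $n \geq 6$ form a line-like crescent configuration of size $6$: the distance-graph isomorphism and the no-three-collinear / no-four-concyclic conditions all pass to initial sub-sequences. Reflections reduce to this same case via the $L^\infty$ symmetries recorded in the preceding remark.

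I would next fix coordinates. Normalize $d_1 = 1$ and $p_1 = (0,0)$. The type $xyx'y'x$ determines each step vector up to one coordinate, so I can write the successive steps as $(1, b_1),\ (a_2, 1),\ (-1, b_3),\ (a_4, -1),\ (1, b_5)$ with each of $b_1, a_2, b_3, a_4, b_5 \in (-1, 1)$. This produces explicit closed-form expressions for $p_2,\dots,p_6$ as functions of these five parameters.

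The heart of the argument is the second-order constraint. After writing out the four equalities $d(p_i,p_{i+2}) = d_2$ and simplifying using that each of $1 \pm a_j$ and $1 \pm b_j$ is positive, a short sign check excludes $d_2 < 1$, so $d_2 = 1 + \delta$ with $\delta > 0$ by distinctness of orders. The four maxima become
\[ \delta \;=\; \max(a_2, b_1) \;=\; \max(-a_2, b_3) \;=\; \max(-a_4, -b_3) \;=\; \max(a_4, -b_5), \]
each of which forces at least one of two parameters to equal $\pm\delta$. Enumerating choices of which argument realizes each maximum, and excluding the three incompatible pairs (e.g.\ one cannot have both $a_2 = \delta$ and $a_2 = -\delta$), a short inclusion-exclusion count leaves exactly five valid sign assignments for $(b_1,a_2,b_3,a_4,b_5)$.

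Finally I would dispatch each case. In two of the five, substituting the forced values into the coordinate formulas produces a collision, such as $p_5 = p_1$ in one case and $p_6 = p_2$ in another, contradicting that the six points are distinct. In each of the remaining three cases, computing the third-order distance $d_3$ from two of its three expressions yields an immediate inconsistency: typically one expression forces $d_3 = 1 = d_1$ and another forces $d_3 = 1 + 2\delta$, in either case contradicting $d_3 \neq d_1$. The main obstacle is combinatorial rather than geometric: one must check that every valid sign assignment falls into one of the five cases and that a suitable third-order expression is selected in each to produce the contradiction. Once all five cases are settled, the lemma follows.
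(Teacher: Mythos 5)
Your proposal is correct, and I checked the two computations it hinges on: with your parametrization the second-order equalities do read $\delta = \max(a_2,b_1) = \max(-a_2,b_3) = \max(-a_4,-b_3) = \max(a_4,-b_5)$ with $\delta > 0$ forced, and of the $2^4$ choices of which argument attains each maximum, the three conflicting pairs eliminate all but five cases, which dispatch exactly as you say: $(a_2,b_3,a_4,b_5)=(\delta,\delta,-\delta,-\delta)$ gives the collision $p_6 = p_2$; $(b_1,a_2,b_3,a_4)=(\delta,-\delta,-\delta,\delta)$ gives $p_5 = p_1$; two further cases give a third-order distance equal to $1 = d_1$ outright; and in the remaining case $d(p_1,p_4) = 1+\delta+b_3$ and $d(p_3,p_6) = 1+\delta-b_3$ must both equal $d(p_2,p_5) = 1+2\delta$, forcing $b_3 = \delta$ and $b_3 = -\delta$ simultaneously. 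Your route is genuinely different in organization from the paper's. The paper first proves a rigidity statement for the length-4 type $xyx'y'$: combining the second- \emph{and} third-order constraints, each of its four cases forces $a = c = -b = -d > 0$, so realizations form an explicit one-parameter family; it then coordinatizes $xyx'y'x$ by applying this rigidity to both the prefix $xyx'y'$ and the suffix $yx'y'x$, and its final contradiction is a collinearity violation, i.e., it invokes condition (2) of the definition of a line-like crescent configuration (incidentally, the collinear triple printed in the paper, $(1-a,1+a),(-a,1+2a),(1-2a,3a)$, is not collinear; the correct triple is $(1,a),(-a,1+2a),(1-2a,3a)$). You instead analyze the length-5 type directly with five free parameters, branch only on the second-order equalities, and conclude using nothing beyond distinctness of the six points and distinctness of the order distances. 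Consequently your argument never uses the no-three-collinear or no-four-concyclic hypotheses, so it proves the slightly stronger statement that no line-like configuration whatsoever of type $xyx'y'x$ exists in $L^\infty$; what the paper's two-stage route buys in exchange is the explicit one-parameter family of all candidate realizations of $xyx'y'$, which pinpoints exactly where the extension to a sixth point fails.
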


\begin{lemma}\label{lemma:bad_BBB}
For some $n$, let $c_1 c_2 \dots c_{n-1}$ be a type with $c_i \in \{ b_{xy}, b_{x'y}, b_{x'y'}, b_{xy'} \}$ for all $1 \leq i \leq n - 1$. Then $n \leq 4$, and the only possible values of $c_1 c_2 \dots c_{n-1}$ (up to reflection) are $b_{xy}$, $b_{xy} b_{x'y}$, and $b_{xy} b_{x'y} b_{xy} $.
\end{lemma}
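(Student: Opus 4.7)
The plan is to work directly with the displacement vectors $\epsilon_i := p_{i+1} - p_i$. Since every $c_i$ is diagonal, after rescaling so that $d_1 = 1$, each $\epsilon_i$ lies in $D := \{(1,1), (-1,1), (-1,-1), (1,-1)\}$, whose four elements correspond to $b_{xy}, b_{x'y}, b_{x'y'}, b_{xy'}$ respectively. Being line-like then amounts to the identity $\|\epsilon_i + \epsilon_{i+1} + \cdots + \epsilon_{i+k-1}\|_\infty = k$ for each valid $i$ and $1 \leq k \leq n-1$, while the crescent conditions demand that no three points be collinear and no four lie on a common $L^\infty$ square.

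First I would restrict consecutive pairs. For $\epsilon_i, \epsilon_{i+1} \in D$, their sum is either $\pm 2\epsilon_i$ (if equal), $(0,0)$ (if antipodal), or of the form $(\pm 2, 0)$ or $(0, \pm 2)$ (if they differ in exactly one coordinate). The equal case forces $p_i, p_{i+1}, p_{i+2}$ to be collinear, and the antipodal case forces $p_i = p_{i+2}$, so only the third option is admissible. Next I would enumerate triples: up to the $L^\infty$-symmetries available, set $\epsilon_1 = (1,1)$, giving $\epsilon_2 \in \{(1,-1), (-1,1)\}$ and $\epsilon_3 \in \{(1,1), (-1,-1)\}$. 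Computing $\epsilon_1 + \epsilon_2 + \epsilon_3$ in each of the four subcases, only $\epsilon_3 = \epsilon_1$ produces a sum of $\|\cdot\|_\infty$ norm $3$; the other choice yields norm $1$.

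Applying this triple condition to the sliding window $(\epsilon_{i}, \epsilon_{i+1}, \epsilon_{i+2})$ for every valid $i$ forces $\epsilon_{i+2} = \epsilon_i$, so the whole sequence must alternate as $\epsilon, \delta, \epsilon, \delta, \ldots$ between two vectors of $D$ that differ in exactly one coordinate. For $n \geq 5$ this immediately gives a collinearity: the identity $p_{k+2} - p_k = \epsilon + \delta$ applied at $k=1$ and $k=3$ shows that $p_1, p_3, p_5$ are equally spaced on a line, violating the crescent condition. Hence $n \leq 4$.

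It remains to verify that exactly the three listed types are realizable (up to reflection). For $n=2$, $b_{xy}$ is trivial. For $n=3$, the enumeration above produces precisely $b_{xy} b_{x'y}$ and $b_{xy} b_{xy'}$, and reflection across the line $y = x$ identifies them. For $n=4$ the only admissible pattern is $\epsilon, \delta, \epsilon$, and I would exhibit the explicit configuration $p_1 = (0,0)$, $p_2 = (1,1)$, $p_3 = (0,2)$, $p_4 = (1,3)$ (i.e.\ type $b_{xy} b_{x'y} b_{xy}$), then check that no three of the points are collinear (a short determinant check on the four triples) and that no common $L^\infty$ square passes through all four. The $L^\infty$-circle check is the only mildly delicate step: for a putative square centered at $(c_x, c_y)$ of radius $r$, the $y$-extent forces $r \geq 3/2$, after which a short case split on whether $|0 - c_x| = r$ or $|1 - c_x| = r$ quickly yields a contradiction in each branch because the $x$-extent of the four points is only $1$.
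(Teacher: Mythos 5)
Your proof is correct and arrives at the same classification as the paper, but by a genuinely different route. The paper argues symbol-by-symbol on types: it invokes Lemma \ref{lemma:bad_BB} to restrict consecutive symbols, then eliminates $b_{xy} b_{x'y} b_{x'y'}$ and $b_{xy} b_{x'y} b_{xy} b_{xy'}$ by exhibiting \emph{four points on a common $L^\infty$ circle} (e.g.\ $(0,0),(1,1),(0,2),(-1,1)$ all lie on the circle of radius $1$ centered at $(0,1)$), and eliminates $b_{xy} b_{x'y} b_{xy} b_{x'y}$ by collinearity. You never need the circle condition for the elimination: your sliding-window triple argument rules out $\epsilon_{i+2} = -\epsilon_i$ by a pure distance violation --- the window sum telescopes, $\epsilon_i + \epsilon_{i+1} - \epsilon_i = \epsilon_{i+1}$, so the order-$3$ distance across that window would equal $d_1$ --- and the resulting global alternation $\epsilon, \delta, \epsilon, \delta, \dots$ disposes of both of the paper's length-four cases at once via the single collinearity $p_5 - p_3 = p_3 - p_1 = \epsilon + \delta$. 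This is more uniform and structural than the paper's coordinate checks, which in exchange are completely concrete. One imprecision should be repaired: your blanket claim that line-likeness ``amounts to'' $||\epsilon_i + \cdots + \epsilon_{i+k-1}||_\infty = k$ is false in general --- being line-like only forces the order-$k$ distances to agree across windows and to be distinct for distinct $k$ (indeed the paper's proof of Theorem \ref{thm:crescentlinfty} explicitly entertains $d_2 < 2$). Fortunately your eliminations only use these correct weaker facts, since the offending triple window has norm $1 = d_1$, contradicting $d_3 \neq d_1$. Two minor remarks: the lemma, as stated and as proved in the paper, asserts only the elimination direction, so your realizability verification of $b_{xy} b_{x'y} b_{xy}$ (whose no-common-square step you only sketch) is a bonus rather than a required component; and your identification of $b_{xy} b_{xy'}$ with $b_{xy} b_{x'y}$ under reflection across $y = x$ is correct and matches the paper's ``up to reflection'' convention.
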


\begin{lemma}\label{lemma:bad_xBx'_xBy}
Suppose $d_2 = 2$. For some $n$, let $c_1 c_2 \dots c_{n-1}$ be a type with $c_i \in \{ b_{xy}, b_{x'y}, b_{x'y'}, b_{xy'} \}$ for all $1 \leq i \leq n - 1$. 
\begin{enumerate}
    \item The type $x c_1 c_2 \dots c_{n-1} x'$ (and reflections) is not 1-extendable.
    \item The type $x c_1 c_2 \dots c_{n-1} y$ (and reflections) is not 1-extendable.
\end{enumerate}
\end{lemma}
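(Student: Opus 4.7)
The plan is to use the hypothesis $d_2 = 2$ to shrink the space of candidate configurations drastically, and then to rule out each surviving extension by direct coordinate analysis. First I would normalize so that $d_1 = 1$; then every step vector $v_i := p_{i+1} - p_i$ has $L^\infty$-norm $1$, so both of its coordinates lie in $[-1,1]$ with at least one coordinate equal to $\pm 1$. The key mechanical observation is that the second-order distance
\[
d(p_{i-1}, p_{i+1}) = \max\bigl(|v_{i-1,x} + v_{i,x}|,\ |v_{i-1,y} + v_{i,y}|\bigr)
\]
can equal $2$ only if some coordinate $z \in \{x, y\}$ satisfies $v_{i-1,z} = v_{i,z} = \pm 1$ with matching sign. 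Applied at the initial $x$-step (for which $v_{1,x} = 1$), this forces $c_1 \in \{b_{xy}, b_{xy'}\}$. For part (1) the analogous reasoning at the final $x'$-step forces $c_{n-1} \in \{b_{x'y}, b_{x'y'}\}$, while for part (2) the final $y$-step forces $c_{n-1} \in \{b_{xy}, b_{x'y}\}$.

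Combining these endpoint constraints with Lemma \ref{lemma:bad_BBB} (which caps the pure-$b$ substring at length $3$ and pins down its pattern up to reflection), and quotienting by the $D_4$-action on $L^\infty$ together with reversal, reduces the problem to a very short list of candidate types: for part (1) only $x b_{xy} b_{x'y} x'$ survives, while for part (2) the candidates are $x b_{xy} y$, $x b_{xy} b_{x'y} y$, and $x b_{xy} b_{x'y} b_{xy} y$. Several of these are not even realizable. For example, in $x b_{xy} b_{x'y} y$, placing $p_1$ at the origin and writing the sub-dominant coordinates of the endpoint steps as $a_0, a_{n+1} \in (-1,1)$, the line-like identity $d(p_1, p_4) = d(p_2, p_5)$ forces $a_0 + 2 = 3$, contradicting $|a_0| < 1$; the same obstruction kills $x b_{xy} b_{x'y} b_{xy} y$. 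This leaves only $x b_{xy} b_{x'y} x'$ and $x b_{xy} y$ as genuine cases to analyze.

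For each of these two survivors I would write down the coordinates of every point as affine functions of the free parameters (the line-like conditions force the two sub-dominant parameters to coincide in the $n=3$ case of part (1)), compute $d_2, \dots, d_{n+1}$ explicitly, and then attempt to extend. Let $v$ be the hypothetical new step vector; the requirements $\|v\|_\infty = 1$ and $d(p_k, p_{\mathrm{new}}) = d_{n+2-k}$ must all hold simultaneously. In each remaining case the second-order equation at the penultimate point, combined with the bound $|v_x|, |v_y| \le 1$, forces one coordinate of $v$ to be a specific value in $\{-1, +1\}$; substituting into the third-order equation at the next earlier point produces a $\max$ whose interior contains a term equal to $3$, while the right-hand side $d_3$ is strictly less than $3$ under $|a_0| < 1$. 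This contradiction completes the non-extendability argument.

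The main obstacle is verifying that the enumeration of candidate types is genuinely exhaustive after accounting for the full dihedral action on $L^\infty$ together with reversal, and that the sign and absolute-value computations inside the nested $\max$'s are handled correctly under the strict inequalities on the sub-dominant coordinates. Each individual sub-case collapses to a short algebraic check, but cleanly enumerating all reflections of the few candidate types and ruling out every possible extension letter without gaps is the delicate bookkeeping step.
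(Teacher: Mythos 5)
Your proposal is correct and follows essentially the same route as the paper's proof: normalize $d_1 = 1$, use $d_2 = 2$ to force $c_1$ and $c_{n-1}$ into the diagonal types with matching dominant sign, invoke Lemmas \ref{lemma:bad_BB} and \ref{lemma:bad_BBB} to cap $n$ and enumerate the surviving strings, and kill each survivor or its extensions by the same third-order $\max$ computations (e.g.\ $D_3(x b_{xy} b_{x'y}) = 2 + a < 3$ against a forced third-order distance of $3$). The only differences are cosmetic: you quotient by a larger symmetry group (reversal composed with the diagonal reflection), which collapses the paper's five part-(2) cases to three, and you observe that $x b_{xy} b_{x'y} b_{xy} y$ is outright unrealizable rather than merely not 1-extendable --- both refinements are valid and consistent with the paper's conclusions.
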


\begin{lemma}\label{lemma:bad_xBbx'y}
Suppose $d_2 = 2$. For some $n$, let $c_1 c_2 \dots c_{n-1}$ be a type with $c_i \in \{ b_{xy}, b_{xy'} \}$ for all $1 \leq i \leq n - 1$. 
\begin{enumerate}
    \item There does not exist a $t \in \{ b_{xy}, b_{x'y}, b_{x'y'}, b_{xy'}, x \}$ so that $x c_1 c_2 \dots c_{n-1} b_{x' y} t$ is realizable. 
    \item There does not exist a $t \in \{ b_{xy}, b_{x'y}, b_{x'y'}, b_{xy'}, x \}$ so that $t x c_1 c_2 \dots c_{n-1} b_{x'y}$ is realizable. 
\end{enumerate}

\end{lemma}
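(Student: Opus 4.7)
The plan is to normalize $d_1 = 1$ and encode each type letter as an explicit unit $L^\infty$ step vector: an $x$-step is $(1,a)$ with $|a|<1$, a $b_{xy}$-step is $(1,1)$, a $b_{xy'}$-step is $(1,-1)$, and a $b_{x'y}$-step is $(-1,1)$. The line-like crescent hypothesis forbids three consecutive collinear points, which here translates to $c_i \neq c_{i+1}$ for all admissible $i$. I will first force the tail of the $c$-sequence, then use appropriate opening and closing triples of steps to read off $d_3 = 3$ and derive a contradiction.

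The preparatory step is common to both parts: the pair $(c_{n-1}, b_{x'y})$ must sum to an $L^\infty$-vector of norm $d_2 = 2$, and the only choice among $\{b_{xy}, b_{xy'}\}$ that works is $c_{n-1} = b_{xy}$, since a $b_{xy'}$-step followed by $b_{x'y}$ sums to $(0,0)$. The case $n = 1$ (empty $c$-sequence) is then immediate in both parts: the pair $(x, b_{x'y})$ sums to $(0, 1+a)$, whose $L^\infty$-norm is strictly less than $2$, contradicting $d_2 = 2$.

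For $n \geq 3$ I establish $d_3 = 3$ from a suitable opening triple. In part (1) the triple $(x, c_1, c_2)$ sums to $(3, a + \epsilon_1 + \epsilon_2)$ with $|a + \epsilon_1 + \epsilon_2| < 3$, so its $L^\infty$-norm is $3$. In part (2) I first eliminate $t \in \{b_{x'y}, b_{x'y'}\}$ via the $d_2 = 2$ test on $(t, x)$, leaving $t \in \{b_{xy}, b_{xy'}, x\}$; for each surviving $t$ the triple $(t, x, c_1)$ has $x$-coordinate $3$ and strictly smaller $|y|$-coordinate, so again $d_3 = 3$. I then evaluate the closing triple $(c_{n-2}, c_{n-1}, b_{x'y}) = (c_{n-2}, (1,1), (-1,1))$, whose sum is $(1, \epsilon_{n-2} + 2)$; this has $L^\infty$-norm equal to $3$ only when $\epsilon_{n-2} = +1$, forcing $c_{n-2} = b_{xy} = c_{n-1}$. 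That makes two consecutive step vectors equal, violating the no-three-collinear condition.

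The case $n = 2$ does not admit the generic triple argument, so I finish with an explicit five-point coordinate computation. With $c_1 = b_{xy}$ forced, the two relevant triples are $(x, c_1, b_{x'y})$, whose sum has $L^\infty$-norm $a + 2 \in (1,3)$, and either $(c_1, b_{x'y}, t)$ in part (1) or $(t, x, c_1)$ in part (2); for each admissible $t$ the auxiliary triple has $L^\infty$-norm $1$ or $3$, which combined with the first value forces $a = \pm 1$ and contradicts $|a|<1$. The main obstacle is the bookkeeping for $n \in \{1,2\}$, since the clean collinearity argument only covers $n \geq 3$ and both parts require a separate case split on $t$ once $c_{n-1} = b_{xy}$ is fixed.
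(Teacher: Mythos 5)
Your proof is correct, and while it runs on the same machinery as the paper's own proof (unit step vectors, eliminations via $d_2 = 2$, and a third-order distance mismatch), it takes a genuinely different decomposition. The paper caps the length immediately: since $c_1 \cdots c_{n-1} b_{x'y}$ is a run of diagonal letters, Lemma \ref{lemma:bad_BBB} forces $n \leq 3$, and Lemma \ref{lemma:bad_BB} together with the computations $D_2(x b_{x'y}) < 2$ and $D_2(x b_{x'y'}) < 2$ leave only the surviving case $n = 2$, $c_1 = b_{xy}$; both parts then die exactly as in your $n = 2$ analysis, by comparing $D_3(x b_{xy} b_{x'y}) = 2 + a < 3$ against $D_3(b_{xy} b_{x'y} b_{xy}) = 3$ in part (1), respectively $D_3(t\, x\, b_{xy}) = 3$ in part (2). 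You replace the appeal to Lemmas \ref{lemma:bad_BB} and \ref{lemma:bad_BBB} with a self-contained, uniform argument for all $n \geq 3$: an opening triple with $x$-displacement $3$ pins $d_3 = 3$, while the closing triple $(c_{n-2}, b_{xy}, b_{x'y})$ sums to $(1, \epsilon_{n-2} + 2)$, so matching $d_3$ forces $c_{n-2} = c_{n-1} = b_{xy}$ and hence three collinear points. What each buys: the paper's route is shorter because the run-length bound was already established; yours is independent of the earlier structural lemmas, handles arbitrary $n$ without invoking them, and as a bonus treats the case $t = x$ in part (1) explicitly via the $d_2$ test on the pair $(b_{x'y}, x)$ (the sum $(0, 1+f)$ has norm $< 2$) --- a case the paper's one-line appeal to Lemma \ref{lemma:bad_BBB} technically does not cover, since that lemma only constrains diagonal letters. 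One point you should make explicit in a full write-up: in your $n = 2$, part (1) step, the claim that each admissible $t$ gives closing-triple norm $1$ or $3$ depends on first discarding $t = x$ and $t = b_{xy'}$ by the $d_2 = 2$ test (for $t = x$ the closing triple sums to $(1, 2+f)$, which matches $2 + a$ when $f = a$ and yields no contradiction), so that elimination genuinely belongs in the admissibility check, as your word ``admissible'' intends.
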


Now we give the proof of Theorem \ref{thm:crescentlinfty}, which is restated below. 

\crescentlinfty*

\begin{proof}[Proof of Theorem \ref{thm:crescentlinfty}]
Let $[p_1,p_2, \dots, p_n]$ be a line-like crescent configuration of size $n \geq 7$ with type $A := a_1 a_2 \dots a_{n-1}$. Without loss of generality, scale $[p_1, p_2, \dots, p_n]$ so that the first order distance satisfies $d_1 = 1$. By the triangle inequality, the second order distance satisfies $d_2 \leq 2$. 

Suppose $d_2 < 2$. Suppose $A$ contains some $b_{xy}$ (or reflections). Without loss of generality $A$ must contain $t b_{xy}$ for some $t \in T$. The types $x b_{xy}$, $y b_{xy}$, $b_{xy'} b_{xy} $, $b_{x'y} b_{xy} $ give $d_2 = 2$, a contradiction. By Lemma \ref{lemma:bad_BB}, $b_{x'y'} b_{xy} $ and $b_{xy} b_{xy} $ are not realizable. Thus $A$ must contain $x' b_{xy}$ or $y' b_{xy}$. By Lemma \ref{lemma:bad_xbx'y}, $n \leq 5$. Contradiction. Thus $A$ only contains $\{ x,x',y,y' \}$. Because $d_2 < 2$, $A$ cannot contain $xx$ (and reflections). By Lemma \ref{lemma:bad_xx'}, $A$ cannot contain $xx'$ (and reflections). Thus $a_i \in \{ x, x' \}$ for even $i$ and $a_i \in \{ y, y' \}$ for odd $i$, or vice versa. By Lemma \ref{lemma:bad_xyxy'}, $A$ must be of the form $xyxy \dots$ or $xyx'y' \dots$. Finally, by Lemma \ref{lemma:bad_xyxy} and Lemma \ref{lemma:bad_xyx'y'}, we have $n \leq 6$ as desired. 

Thus $d_2 = 2$. Suppose $A$ contains at least two of $\{ x,x',y,y' \}$. Since $d_2 = 2$, $A$ cannot contain $xx'$ or $xy$ (and reflections). Thus $A$ contains $x c_1 c_2 \dots c_{k-1} x'$ or $x c_1 c_2 \dots c_{k-1} y$ (or reflections) for $k \geq 2$ and $c_i \in \{ b_{xy}, b_{x'y}, b_{x'y'}, b_{xy'} \}$ for all $1 \leq i \leq k - 1$. By Lemma \ref{lemma:bad_BBB}, $k \leq 4$, so by Lemma \ref{lemma:bad_xBx'_xBy}, $A$ must be of the form $x c_1 c_2 \dots c_{k-1} x'$ or $x c_1 c_2 \dots c_{k-1} y$ (or reflections). Thus $n \leq 6$. Otherwise, $A$ contains at most one of $\{ x, x', y, y' \}$, without loss of generality $x$. If $A$ does not contain $x$, then $n \leq 4$ by Lemma \ref{lemma:bad_BB}. Suppose $A$ contains $x$. Suppose $A$ contains $b_{x'y}$ or $b_{x'y'}$. Because $d_2 = 2$, $A$ cannot contain $x b_{x'y}$ or $x b_{x'y'}$. By Lemma \ref{lemma:bad_BBB} and Lemma \ref{lemma:bad_xBbx'y}, if $A$ contains $x c_1 c_2 \dots c_{n-1} b_{x' y}$ or $x c_1 c_2 \dots c_{n-1} b_{x' y'}$, then $n \leq 5$. Thus $A$ cannot contain $b_{x'y}$ or $b_{x'y'}$. Thus $A$ only contains $x$, $b_{xy}$, and $b_{xy'}$. This implies that $A$ is a perpendicular perturbation of a horizontal line. 
\end{proof}


\subsection{Proofs of Lemmas}\label{section:lemmas_proofs}

This section contains the proofs of Lemmas \ref{lemma:bad_BB}, \ref{lemma:bad_xbx'y}, \ref{lemma:bad_xx'}, \ref{lemma:bad_xyxy'}, \ref{lemma:bad_xyxy}, \ref{lemma:bad_xyx'y'}, \ref{lemma:bad_BBB}, \ref{lemma:bad_xBx'_xBy}, and \ref{lemma:bad_xBbx'y}. Their statements can be found in Section \ref{section:linfty_theorem:proof}. 

Next, we define notation used in the proofs of these lemmas. 

\begin{defi}
Let $a_1 a_2 \dots a_{n-1}$ be a realizable type. Suppose there are $k$ symbols $a_i$ for which $a_i \in \{ x, x', y, y' \}$ and $n-1-k$ symbols $a_i$ for which $a_i \in \{ b_{xy}, b_{x'y}, b_{x'y'}, b_{xy'} \}$. Let $i_1, i_2, \dots, i_k$ be the subsequence of indices $\{ 1, 2, \dots, n - 1 \}$ for which $a_{i_j} \in \{ x, x', y, y' \}$. Let $f_1, \dots, f_k \in \R$ so that $|f_i| < 1$ for each $1 \leq i \leq k$. 

We define a list $[p_1, p_2, \dots, p_n ]$ of $n$ points as follows. Set $p_1 = (0,0)$. For all $1 \leq i \leq n - 1$ we have the following.
\begin{itemize}
    \item If $a_i = x$, then $i = i_j$ for some $1 \leq j \leq k$. Set $p_{i+1} := p_i + (1, f_{j})$. 
    \item If $a_i = x'$, then $i = i_j$ for some $1 \leq j \leq k$. Set $p_{i+1} := p_i + (-1, f_{j} )$. 
    \item If $a_i = y$, then $i = i_j$ for some $1 \leq j \leq k$. Set $p_{i+1} := p_i + (f_{j}, 1)$. 
    \item If $a_i = y'$, then $i = i_j$ for some $1 \leq j \leq k$. Set $p_{i+1} := p_i + (f_{j}, -1)$. \item If $a_i = b_{xy}$, then set $p_{i+1} := p_i + (1, 1)$.
    \item If $a_i = b_{x'y}$, then set $p_{i+1} := p_i + (-1, 1)$.
    \item If $a_i = b_{x'y'}$, then set $p_{i+1} := p_i + (-1, -1)$.
    \item If $a_i = b_{xy'}$, then set $p_{i+1} := p_i + (1, -1)$.
\end{itemize}
We say that the type $a_1 a_2 \dots a_{n-1}$ \textbf{has coordinates $[p_1, p_2, \dots, p_n]_{f_1, \dots f_k}$}. The $f_1, f_2, \dots, f_k$ are called the \textbf{free variables} of $a_1 a_2 \dots a_{n-1}$. When the $f_1, \dots, f_k$ are clear, we write that $a_1 a_2 \dots a_{n-1}$ has coordinates $[p_1, p_2, \dots, p_n]$, or that $[p_1, p_2, \dots, p_n]$ are the coordinates of $a_1 a_2 \dots a_{n-1}$. 
\end{defi}

Let $s_1, s_2, \dots, s_n$ be a line-like crescent configuration with type $a_1 a_2 \dots a_{n-1}$. Say $a_1 a_2 \dots a_{n-1}$ has coordinates $[p_1, p_2, \dots, p_n]$. Then, up to translation, there exist free variables $f_1, \dots, f_k$ for which $[s_1, s_2, \dots, s_n] = [p_1, p_2, \dots, p_n]_{f_1, \dots, f_k}$.

\begin{exa}
The type $xyxy$ has coordinates
\[ [(0,0), (1,a), (1+b, 1+a), (2+b, 1+a+c), (2+b+d, 2+a+c)] \]
for free variables $|a|,|b|,|c|,|d| < 1$. If, for example, we show that there exist no $|a|,|b|,|c|,|d| <1$ so that 
\[ [(0,0), (1,a), (1+b, 1+a), (2+b, 1+a+c), (2+b+d, 2+a+c), (1+b+d, 3+a+c)],\]
then it follows that $xyxyb_{x'y}$ is not realizable. 
\end{exa}

\begin{defi}\label{defn:type_order_distance}
Let $a_1 a_2 \dots a_{n-1}$ be a realizable type with coordinates $[p_1, p_2, \dots, p_n]_{f_1, \dots, f_k}$. Because $a_1 a_2 \dots a_{n-1}$ is realizable, there exist $f_1, f_2, \dots, f_k$ for which $[p_1, p_2, \dots, p_n]_{f_1, \dots, f_k}$ is a line-like crescent configuration. For these choices of $f_1, \dots, f_k$, we notate the $i$\tss{th} order distance of $[p_1, p_2, \dots, p_n]_{f_1, \dots, f_k}$ as 
\[ D_{i,f_1, \dots, f_k}(a_1 a_2 \dots a_{n-1} ) := d_i = d(p_1, p_{1+i}) = \dots = d(p_{n-i}, p_n).\]
When the value of $D_{i,f_1, \dots, f_k}(a_1 a_2 \dots a_{n-1} )$ is independent of $f_1, \dots, f_k$, we write \textbf{$D_i(a_1 a_2 \dots a_{n-1} )$}. 
\end{defi}

\begin{rem}
We typically use Definition \ref{defn:type_order_distance} when the value of $D_i(a_1 a_2 \dots a_{n-1})$ is independent of the free variables $f_1, \dots, f_k$. For example, consider the type $xxy$. It has coordinates $[p_1, p_2, p_3, p_4] = [(0,0)$, $(1,a)$, $(2,a+b)$, $(2+c, 1+a+b)]$. We have $d(p_1, p_3) = \max \{ 2, |a+b| \}$ and $d(p_2, p_4) = \{ 1+c, 1+b \}$. Because $|a|,|b|,|c| < 1$, this implies $d(p_1, p_3) = 2$ and $d(p_2, p_4) < 2$. Thus, independent of free variables, we have $D_2(xx) = 2$ and $D_2(xy) < 2$. In particular, this shows that $D_2(xxy)$ is undefined. In other words, $xxy$ is not realizable.
\end{rem}

Finally, we prove Lemmas \ref{lemma:bad_BB}, \ref{lemma:bad_xbx'y}, \ref{lemma:bad_xx'}, \ref{lemma:bad_xyxy'}, \ref{lemma:bad_xyxy}, \ref{lemma:bad_xyx'y'}, \ref{lemma:bad_BBB}, \ref{lemma:bad_xBx'_xBy}, and \ref{lemma:bad_xBbx'y}. Their statements can be found in Section \ref{section:linfty_theorem:proof}. Throughout these proofs, we assume the first order distance $d_1 = 1$. Additionally, $a,b,c, d$ denote free variables of a type. In other words, $a,b,c,d \in \R$ with $|a|,|b|,|c|,|d| < 1$.

\begin{proof}[Proof of Lemma \ref{lemma:bad_BB}]
The type $b_{xy} b_{xy}$ has coordinates $(0,0), (1,1), (2,2)$. This is not realizable because these points lie on a common line.\\

The type $b_{xy} b_{x'y'}$ has coordinates $(0,0), (1,1), (0,0)$. This is not realizable because the points are not distinct. 
\end{proof}

\begin{proof}[Proof of Lemma \ref{lemma:bad_xbx'y}]
Since $D_2(x b_{x' y} ) < 2$, we have $d_2 < 2$. The types $xb_{x'y} t$ for $t \in \{ x', y, b_{xy}, b_{x'y'} \}$ are not realizable because $d_2 < 2$. The types $x b_{x' y} t$ for $t \in \{ b_{x'y}, b_{xy'} \}$ are not realizable by Lemma \ref{lemma:bad_BB}. It suffices to show that $xb_{x'y} x$ and $x b_{x' y} y'$ are not 1-extendable. \\

The type $x b_{x'y} x$ has coordinates $(0,0), (1,a), (0,1+a), (1, 1+a+b)$. Considering second order distances, we have $1+a = 1+b$, so $a = b$. Thus $x b_{x' y} x$ has coordinates $(0,0), (1,a), (0,1+a), (1,1+2a)$. Since $d_2 < 2$, the types $x b_{x'y} x t$ for $t \in \{ x, b_{xy}, b_{xy'} \}$ are not realizable. Additionally, $x b_{x' y} x t$ with $t \in \{ b_{x' y}, b_{x' y'} \}$ are not realizable, because the coordinates of these types have three points on a common line (three points with $x$-coordinate 0). Thus $x b_{x' y} xt$ is only realizable if $t \in \{ y, y' \}$. 
\begin{itemize}
    \item The coordinates of $xb_{x' y} xy$ are $(0,0), (1,a), (0,1+a), (1,1+2a), (1+b, 2+2a)$. Considering third order distances, we have $\max \{ 1, 1+a \} = \max \{ |b|, 2+a \}$. This is impossible because $2+a > 1$ and $2+a > 1+a$. Thus $xb_{x'y} xy$ is not realizable. 
    \item The coordinates of $x b_{x'y} xy'$ are $(0,0), (1,a), (0,1+a), (1,1+2a), (1+b, 2a)$. Considering third order distances, we have $\max \{ 1, 1+2a \} = \max \{ |b|, |a| \}$. This is impossible because $1 > |b|$ and $1 > |a|$. Thus $xb_{x'y} xy'$ is not realizable. 
\end{itemize}

The type $x b_{x' y} y'$ has coordinates $(0,0), (1,a), (0,1+a), (b,a)$. Considering second order distances, we have $1+a = 1 - b$, so $b = -a$. Thus $x b_{x' y} y'$ has coordinates $(0,0), (1,a), (0,1+a), (-a,a)$. If $a > 0$, these points lie on the circle with corners $(-a,0)$ and $(1,1+a)$. If $a < 0$, these points lie on the circle with corners $(0,a)$ and $(1,1+a)$. Contradiction. Thus $x b_{x'y} y'$ is not realizable. 
\end{proof}

\begin{proof}[Proof of Lemma \ref{lemma:bad_xx'}]
It suffices to show that $xx'xy$, $xx'y'y$, $xx'yx$, and $xx'yx$ are not realizable. \\

The type $xx'xy$ has coordinates $(0,0), (1,a), (0,a+b), (1,a+b+c), (1+d, 1+a+b+c)$. Considering third order distances, $\max \{ 1, |a+b+c| \} = \max \{ |d|, |1+b+c| \}$. Because $d_3 \neq d_1 = 1$, this implies $|a+b+c| = |1+b+c| > 1$. Suppose $1+b+c < 0$. Then $1+b+c < -1$, which implies $b+c < -2$, a contradiction. Thus $1+b+c > 1$. Suppose $a+b+c < 0$. Then $a+b+c < -1$, and since $1 + b + c > 1$, this implies $a < -1$, a contradiction. Thus $a+b+c > 1$. But then $a+b+c = 1+b+c$, which implies $a = 1$, a contradiction. Thus $xx'xy$ is not realizable. \\

The type $xx'y'y$ has coordinates $(0,0), (1,a), (0,a+b), (c, -1+a+b), (c+d, a+b)$. If $|a+b| \leq 1$, then four points lie on a circle. 
\begin{itemize}
    \item If $c > 0$, then $(0,0), (1,a), (0,a+b) , (c, -1+a+b)$ lie on a circle.
    \item If $c+d > 0$, then $(0,0), (1,a), (0, a+b), (c+d, a+b)$ lie on a circle. 
    \item Otherwise, $c \leq 0$ and $c + d < 0$. In this case, $(0,0), (0,a+b), (c, -1+a+b), (c+d, a+b)$ lie on a circle.
\end{itemize}
Thus $|a+b| > 1$. Considering second order distances, $|a+b| = \max\{ 1-c, 1-b \} = |c+d|$. If $a+b < 0$, then $-a-b \geq 1-b$, which implies $a \leq -1$, a contradiction. Similarly, if $c+d < 0$, then $-c-d \geq 1-c$, which implies $c \leq -1$, a contradiction. Thus $a+b > 0$ and $c+d > 0$. Since $|a+b| = |c+d| > 1$, this implies $a,b > 0$ and $c,d > 0$. But $\max \{ 1-c, 1-b \} > 1$ implies $b < 0$ or $c < 0$. Contradiction. Thus $xx'y'y$ is not realizable. \\

The type $xx'y'x$ has coordinates $(0,0), (1,a), (0,a+b), (c, -1+a+b), (1+c, -1+a+b+d)$. Considering second order distances, $|a+b| = \max \{ 1-c, 1+b \}= \max\{ 1+c, 1+d \}$. Since $d_2 \geq 1-c$, $d_2 \geq 1+c$, and $d_2 \neq d_1 = 1$, we have $d_2 > 1$. If $a+b > 0$, then $a+b \geq 1+b$, so $a \geq 1$, a contradiction. Thus $a+b < 0$, and since $|a+b| > 1$, we have $a+b < -1$. Considering third order distances, $\max \{ |c|, |-1 + a + b| \} = \max \{ |c|, |-1+b+d| \}$. Because $a+b < -1$, we have $|-1 + a + b| > 2$, so $d_3 = 1 - a- b = |-1+b+d|$. Since $a+b < -1$, we have $a,b < 0$. So $\max \{ 1-c, 1+b \} > 1$ implies $c < 0$, and $\max \{ 1+c, 1+d \} > 1$ therefore implies $d> 0$. Since $b < 0$ and $d > 0$, we have $-1 + b + d < 0$. Since $c < 0$ and $d> 0$, we have $|a+b| = 1+d$, which implies $2+d = 1 - b - d$. Rearranging gives $1 + 2d = -b$. Since $d > 0$, we have $1 + 2d > 1$, but $-b < 1$. Contradiction. Thus $xx'y'x$ is not realizable.\\ 

The type $xx'y'x'$ has coordinates $(0,0), (1,a), (0,a+b), (c,-1+a+b), (-1+c, -1+a+b+d)$. Considering second order distances, $|a+b| = \max \{ 1-c, 1-b \} = \max \{ 1-c, 1-d \}$. If $a+b < 0$, then $-a -b \geq 1-b$, which implies $a \leq -1$, a contradiction. Thus $a+b > 0$. Considering third order distances, $\max \{ |c|, |-1+a+b| \} = \max \{ 2-c, |-1 + b + d| \}$. Since $a+b > 0$ (and $a+b < 2$), we have $|-1 + a+b | < 1$. Since $|c| < 1$ and $|-1 + a + b| < 1$, we have $\max \{ |c|, |-1+a+b| \} < 1$. But $2 - c > 1$. Contradiction. Thus $xx'y'x'$ is not realizable. 
\end{proof}

\begin{proof}[Proof of Lemma \ref{lemma:bad_xyxy'}]$ $\\
\indent Proof of (1): The type $xyxy'$ has coordinates $(0,0)$, $(1,a)$, $(1+b, 1+a)$, $(2+b, 1+a+c)$, $(2+b+d, a+c)$. Considering third order distances, $\max \{ 2+b, |1+a+c| \} = \max\{ |1+b+d|, |c| \}$. This is impossible because $2+b > |c|$ and $2+b > |1+b+d|$. Thus $xyxy'$ is not realizable. \\

Proof of (2): The type $xyx'y$ has coordinates $(0,0)$ $(1,a)$ $(1+b, 1+a)$, $(b, 1+a+c)$, $(b+d, 2+a+c)$. Considering third order distances, $\max \{ |b|, |1+a+c| \} = \max \{ |1-b-d|, 2+c \}$. This is impossible because $2+c > |b|$ and $2+c > |1+a+c|$. Thus $xyxy'$ is not realizable.
\end{proof}

\begin{proof}[Proof of Lemma \ref{lemma:bad_xyxy}]
The type $xyxy$ has coordinates $(0,0)$, $(1,a)$, $(1+b, 1+a)$, $(2+b, 1+a+c)$, $(2+b+d, 2+a+c)$. Considering third order distances, $\max \{ 2+b, |1+a+c| \} = \max \{ |1+b+d|, 2+c \}$. Because $2+b > |1+b+d|$ and $2+c > |1+a+c|$, it follows that $2+b = 2+c$ is the third order distance. Thus $b = c$. 

Now consider the type $xyxyxy$. Note that it contains $xyxy$ (and its reflection $yxyx$) three times. Thus it has coordinates $(0,0)$, $(1,a)$, $(1+b, 1+a)$, $(2+b, 1+a+b)$, $(2+2b, 2+a+b)$, $(3+2b, 2+a+2b)$, $(3+2b+c, 3+a+2b)$. But $(1,a)$, $(2+b, 1+a+b)$, and $(3+2b, 2+a+2b)$ lie on a common line. Thus $xyxyxy$ is not realizable. So such a type $c_1 c_2 \dots c_{n-1}$ can only be realizable if $n \leq 6$. 
\end{proof}

\begin{proof}[Proof of Lemma \ref{lemma:bad_xyx'y'}]
The type $xyx'y'$ has coordinates $(0,0)$, $(1,a)$, $(1+b, 1+a)$, $(b, 1+a+c)$, $(b+d, a+c)$. Considering second order distances, 
\[ d_2 = \max\{ 1+b, 1+a \} = \max \{ 1-b, 1+c \} = \max \{ 1-d, 1-c \}.\]
Considering third order distances, 
\[ d_3 = \max\{ |b|, |1+a+c| \} = \max \{ |-1+b+d|, |c| \} \]
Because $d_2 \neq 1$, there are four cases. 
\begin{enumerate}
    \item Case $1+b = 1+c = 1-d > 1$, so $b = c = -d > 0$. Then $|-1+b+d| = 1$, so $d_3 = \max \{ |-1+b+d|, |c| \} = 1$, a contradiction. 
    \item Case $1+a = 1-b = 1-c > 1$, so $a = -b = -c > 0$. Then $|1+a+c| = 1$, so $d_3 = \max \{ |b|, |1+a+c| \} = 1$, a contradiction. 
    \item Case $1+a = 1-b = 1-d$, so $a = -b = -d > 0$. Then $|-1+b+d| > 1$, so $1+a+c = 1-b-d$, so $a+c = -b-d$, and since $a=-b=-d$, this implies $a = c = -b = -d > 0$. 
    \item Case $1 + a = 1 + c = 1-d$, so $a = c = -d > 0$. Then $|1+a+c| > 1$, so $1+a+c = 1-b-d$, so $a+c = -b-d$, and since $a=c=-d$, this implies $a = c = -b = -d > 0$.
\end{enumerate}
Thus in any possible case, $a = c = -b = -d$. This means that $xyx'y'$ has coordinates $(0,0), (1,a), (1-a, 1+a), (-a, 1+2a), (-2a, 2a)$. 

Now consider the type $xyx'y'x$. By the above (using on $xyx'y'$ and $yx'y'x$), $xyx'y'x$ has coordinates $(0,0), (1,a), (1-a, 1+a), (-a, 1+2a), (-2a, 2a), (1-2a, 3a)$. But the points $(1-a, 1+a), (-a, 1+2a), (1-2a, 3a)$ lie on a common line. Thus $xyx'y'x$ is not realizable. So such a type $c_1 c_2 \dots c_{n-1}$ can only be realizable if $n \leq 5$. 
\end{proof}

\begin{proof}[Proof of Lemma \ref{lemma:bad_BBB}]
Let $c_1 c_2 \dots c_{n-1} $ be a type with $c_i \in \{ b_{xy}, b_{x'y}, b_{x'y'}, b_{xy'} \}$. Without loss of generality, let $c_1 = b_{xy}$. By Lemma \ref{lemma:bad_BB}, $c_2 \in \{ b_{x'y}, b_{xy'} \}$, so without loss of generality $c_2 = b_{x'y}$. By Lemma \ref{lemma:bad_BB}, $c_3 \in \{ b_{xy}, b_{x'y'} \}$. The coordinates of $b_{xy} b_{x'y} b_{x'y'}$ are $(0,0)$, $(1,1)$, $(0,2)$, $(-1,1)$, which lie on a circle with corners $(-1,1)$ and $(1,1)$. Thus $c_3 = b_{xy}$. Finally, we claim that $c_1 c_2 c_3 = b_{xy} b_{x'y} b_{xy}$ is not 1-extendable. By Lemma \ref{lemma:bad_BB}, $c_4 \in \{ b_{x'y}, b_{xy'} \}$. 
\begin{itemize}
    \item The type $c_1 c_2 c_3 c_4 = b_{xy} b_{x'y} b_{xy} b_{x'y}$ has coordinates $(0,0)$, $(1,1)$, $(0,2)$, $(1,3)$, $(0,4)$. This is not realizable because $(0,0)$, $(0,2)$, $(0,4)$ lie on a common line.
    \item The type $c_1 c_2 c_3 c_4 = b_{xy} b_{x'y} b_{xy} b_{xy'}$ has coordinates $(0,0)$, $(1,1)$, $(0,2)$, $(1,3)$, $(2,2)$. This is not realizable because $(1,1)$, $(2,2)$, $(1,3)$, $(2,2)$ lie on a common circle with corners $(0,1)$ and $(2,3)$. 
\end{itemize}
\end{proof}

\begin{proof}[Proof of Lemma \ref{lemma:bad_xBx'_xBy}]
$ $\\
\indent Proof of (1): For some $n$, let $c_1 c_2 \dots c_{n-1}$ be a type with $c_i \in \{ b_{xy}, b_{x'y}, b_{x'y'}, b_{xy'} \}$ for all $1 \leq i \leq n - 1$. Consider $x c_1 c_2 \dots c_{n-1} x'$. Because $d_2 = 2$, we have $c_1 \in \{ b_{xy}, b_{xy'} \}$ and $c_{n-1} \in \{b_{x'y}, b_{x'y'} \}$. Thus $n \geq 3$. By Lemma \ref{lemma:bad_BBB}, $n \leq 4$. If $n = 4$, then by Lemma \ref{lemma:bad_BBB}, $c_1 = c_3 = c_{n-1}$. This is a contradiction, thus $n = 3$. 

When $n = 3$, we have $x c_1 c_2 \dots c_{n-1} x' = x c_1 c_2 x'$ for $c_1 \in \{ b_{xy}, b_{xy'} \}$ and $c_2 \in \{ b_{x'y}, b_{x'y'} \}$. By Lemma \ref{lemma:bad_BB}, $c_1 c_2 = b_{xy} b_{x'y}$ or $c_1 c_2 = b_{xy'} b_{x'y'}$. Without loss of generality (reflection about the $x$-axis), we can assume $c_1 c_2 = b_{xy} b_{x'y}$. Then $D_3(x b_{xy} b_{x'y} x') < 3$. 

We claim that $x b_{xy} b_{x'y} x'$ is not 1-extendable. Suppose $x b_{xy} b_{x'y} x' t$ is realizable for some $t \in T$. Because $d_2 = 2$, $t \in \{ x', b_{x'y}, b_{x'y'} \}$. But $D_3(b_{x'y} x' t) \geq 3$, which contradicts $d_3 < 3$. Thus $x b_{xy} b_{x'y} x'$ is not 1-extendable. \\

Proof of (2): For some $n$, let $c_1 c_2 \dots c_{n-1}$ be a type with $c_i \in \{ b_{xy}, b_{x'y}, b_{x'y'}, b_{xy'} \}$ for all $1 \leq i \leq n$. Consider $x c_1 c_2 \dots c_{n-1} y$. Because $d_2 = 2$, we have $c_1 \in \{ b_{xy}, b_{xy'} \}$ and $c_{n-1} \in \{ b_{xy}, b_{x'y} \}$. By Lemma \ref{lemma:bad_BBB}, $n \leq 4$. If $n = 2$, then $c_1 = b_{xy}$. If $n = 3$, then by Lemma \ref{lemma:bad_BB}, $c_1 c_2 = b_{xy} b_{x'y}$ or $c_1 c_2 = b_{xy'} b_{xy}$. If $n = 4$, then by Lemma \ref{lemma:bad_BBB}, $c_1 c_2 c_3 = b_{xy} b_{x'y} b_{xy}$ or $c_1 c_2 c_3 = b_{xy} b_{xy'} b_{xy}$. We treat each of these cases individually to show that $x c_1 c_2 \dots c_{n-1} y$ is not 1-extendable. 
\begin{itemize}
    \item Case $x c_1 c_2 \dots c_{n-1} y = x b_{xy} y$. Suppose $x b_{xy} y t$ is realizable for some $t \in T$. Since $d_2 = 2$, $t \in \{ y, b_{xy}, b_{x'y} \}$. Then $D_3(x b_{xy} y) < 3$ but $D_3(b_{xy} y t) = 3$. Contradiction, thus $x b_{xy} y$ is not 1-extendable. 
    \item Case $x c_1 c_2 \dots c_{n-1} y = x b_{xy} b_{x'y} y$. Then $D_3(x b_{xy} b_{x'y} ) < 3$ but $D_3(b_{xy} b_{x'y} y ) = 3$. Contradiction, thus $x b_{xy} b_{x'y} y$ is not even realizable, and in particular not 1-extendable. 
    \item Case $x c_1 c_2 \dots c_{n-1} y = x b_{xy'} b_{xy} y$. Then $D_3(x b_{xy'} b_{xy} ) = 3$ but $D_3(b_{xy'} b_{xy} y ) < 3$. Contradiction, thus $x b_{xy'} b_{xy} y$ is not even realizable, and in particular not 1-extendable. 
    \item Case $x c_1 c_2 \dots c_{n-1} y = x b_{xy} b_{x'y} b_{xy} y$. Suppose $x b_{xy} b_{x'y} b_{xy} y t$ is realizable for some $t \in T$. Since $d_2 = 2$, $t \in \{ y, b_{xy}, b_{x'y} \}$. Then $D_3(x b_{xy} b_{x'y} ) < 3$, but $D_3(b_{xy} y t) = 3$. Contradiction, thus $x b_{xy} b_{x'y} b_{xy} y$ is not 1-extendable. 
    \item Case $x c_1 c_2 \dots c_{n-1} y = x b_{xy} b_{xy'} b_{xy} y$. Then $D_4(x b_{xy} b_{xy'} b_{xy} ) = 4$ but $D_4(b_{xy} b_{xy'} b_{xy} y ) < 4$. Contradiction, thus $x b_{xy} b_{xy'} b_{xy} y$ is not even realizable, and in particular not 1-extendable. 
\end{itemize}
\end{proof}

\begin{proof}[Proof of Lemma \ref{lemma:bad_xBbx'y}]
For some $n$, let $c_1 c_2 \dots c_{n-1}$ be a type with $c_i \in \{ b_{xy}, b_{xy'} \}$ for all $1 \leq i \leq n - 1$. If $x c_1 c_2 \dots c_{n-1} b_{x'y}$ is realizable, by Lemma \ref{lemma:bad_BBB}, we have $n \leq 3$. If $n = 3$, then $c_1 = b_{x'y}$. If $n = 2$, then $x c_1 c_2 \cdots c_{n-1} b_{x'y} = x c_1 b_{x'y}$, and by Lemma \ref{lemma:bad_BB}, $c_1 = \{ b_{xy}, b_{x'y'} \}$. We show that two of these three cases give types which are not realizable. 
\begin{itemize}
    \item Case $n = 3$ and $c_1 = b_{x'y}$. The type is $x b_{x' y} c_2 b_{x' y}$. We are given $d_2 = 2$, but $D_2(x b_{x'y} ) < 2$. Contradiction. Thus $x b_{x'y} c_2 b_{x'y}$ is not realizable.
    \item Case $n = 2$ and $c_1 = b_{x'y'}$. The type is $x b_{x'y'} b_{x'y}$. We are given $d_2 = 2$, but $D_3(x b_{x'y'} ) < 2$. Contradiction. Thus $x b_{x'y'} b_{x'y}$ is not realizable.
\end{itemize}
Thus $x c_1 c_2 \dots c_{n-1} b_{x'y}$ is realizable only if $n = 2$ and $c_1 = b_{xy}$. Now we prove (1) and (2). \\

Proof of (1): Suppose there exists a $t \in \{ b_{xy}, b_{x'y}, b_{x'y'}, b_{xy'} , x \}$ for which $x b_{xy} b_{x'y} t$ is realizable. By Lemma \ref{lemma:bad_BBB}, $t = b_{xy}$. Then $D_3(x b_{xy} b_{x'y} ) < 3$ but $D_3(b_{xy} b_{x'y} b_{xy} ) = 3$. Contradiction. Thus $x b_{xy} b_{x'y} t$ is not realizable. \\

Proof of (2): Suppose there exists a $t \in \{ b_{xy}, b_{x'y}, b_{x'y'}, b_{xy'} , x \}$ for which $t x b_{xy} b_{x'y}$ is realizable. Since $d_2 = 2$, $t \in \{ x, b_{xy}, b_{xy'} \}$. Then $D_3(x b_{xy} b_{x'y} ) < 3$, but $D_3(t x b_{xy} ) = 3$. Contradiction. Thus $t x b_{xy} b_{x'y}$ is not realizable.
\end{proof}


\section{Constructions of strong crescent configurations}\label{section5}

In this section, we provide constructions of strong crescent configurations. For an overview of known constructions, see Example \ref{ex:strong_construction}. In Section \ref{section:crescentgeneral}, we construct a strong crescent configuration of size 4 in any norm $||\cdot||$. In Section \ref{section:crescentl2}, we construct a strong crescent configuration of size $6$ in $L^2$. In Section \ref{section:crescentloo}, we construct strong crescent configurations of sizes $n \leq 8$ in $L^1$ and $L^\infty$. 


\subsection{Strong crescent configuration of size $n = 4$ in any norm}\label{section:crescentgeneral}

\crescentgeneral* 

\begin{proof} There are two cases, depending on whether the unit circle of $||\cdot||$ is a union of line segments. 

Case 1: The unit circle of $||\cdot||$ is not a union of line segments. Then pick a point $D$. Draw a unit circle centered at $D$, and pick points $A$, $B$, $C$ on this unit circle so that $A,B,C$ do not lie on a common line, $|AB| = |BC|$, and $|AD| > |AC| > |AB|$. See Figure \ref{fig:roundcrescent}.

Case 2: The unit circle of $||\cdot||$ is a union of line segments. Then pick a point $D$. Draw a unit circle centered at $D$. This circle contains at least one corner, i.e. a point where two line segments of different slopes meet. Let $B$ be a corner point. Let $A$ and $C$ each lie on one of the two line segments which meet at $B$, such that $|AB| = |BC|$ and $|AD| > |AC| > |AB|$. See Figure \ref{fig:cornercrescent}. 

In either case, we have the following: the distance $|AD| = |BD| = |CD|$ occurs three times, $|AB| = |BC|$ occurs two times, and $|AC|$ occurs once. Moreover, no three points of $A,B,C,D$ lie on a line, and the points $A,B,C,D$ do not form a line-like configuration. Thus $A,B,C,D$ form a strong crescent configuration.

\begin{figure}[h!]
    \centering
    \includegraphics[scale=0.3]{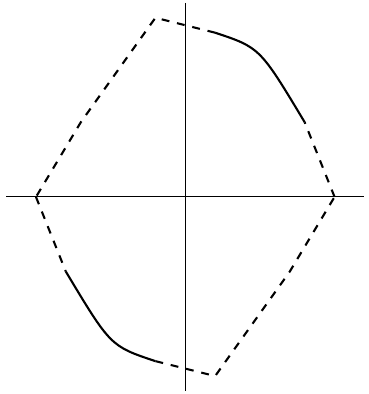}
    \includegraphics[scale=0.3]{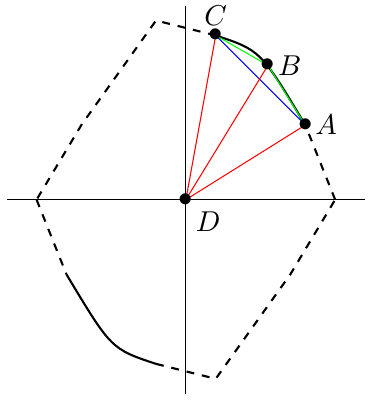}
    \caption{Left: a unit ball which is not a union of line segments. Right: a strong crescent configuration of size four in this norm.}
    \label{fig:roundcrescent}
\end{figure}

\begin{figure}[h!]
    \centering
    \includegraphics[scale=0.3]{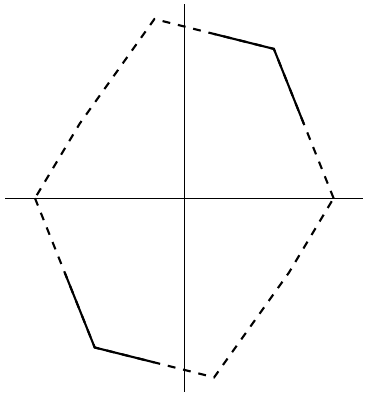}
    \includegraphics[scale=0.3]{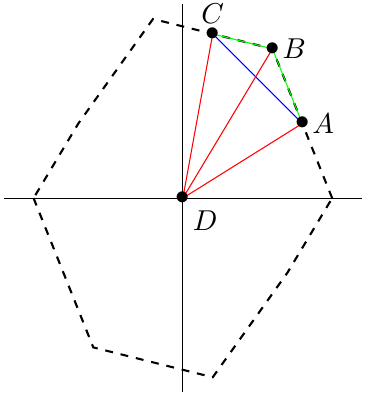}
    \caption{Left: a unit ball which is a union of line segments. Right: a strong crescent configuration of size four in this norm.}
    \label{fig:cornercrescent}
\end{figure}
\end{proof}


\subsection{Strong crescent configurations in $L^2$}\label{section:crescentl2}

For $n \leq 5$, there exist known constructions of crescent configurations in $L^2$ which are also strong crescent configurations. However,  Pal\' asti's \cite{Pal87, Pal89} constructions of crescent configurations of sizes 6, 7, 8 are not strong. We construct a strong crescent configuration of size 6 and conjecture that strong crescent configurations of sizes exceeding 6 do not exist. 

\ltwocrescent*

\begin{proof}
For constructions of sizes $n \leq 5$, see \cite{Liu, Pal87, Pal89, Erd89}. 

We constructed a strong crescent configuration of size 6 by searching a triangular lattice was using a backtracking algorithm.\footnote{
Our code can be found at \url{https://github.com/the-set-of-sets/nin}.
} The following are coordinates of a strong configuration of size 6 produced by our code: 
\[ \left\{ \left(\frac{1}{2}, \frac{\sqrt{3}}{2} \right), \left(1, \sqrt{3} \right), \left(\frac{3}{2}, \frac{\sqrt{3}}{2} \right), \left(\frac{7}{2}, \frac{2\sqrt{3}}{2} \right), \left(\frac{9}{2}, \frac{3\sqrt{3}}{2} \right), \left(4, 0 \right) \right\}.
 \]
The distance graph is depicted in Figure \ref{fig:l2crescent6}. 

\begin{figure}[h!]
    \centering
    \includegraphics[scale=0.4]{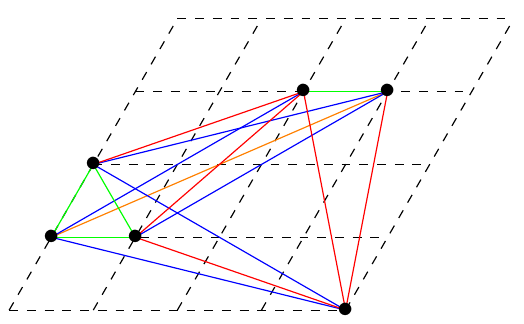}
    \caption{Strong crescent configuration of size 6 in $L^2$.}
    \label{fig:l2crescent6}
\end{figure}
\end{proof}

\begin{rem}\label{rem:l2search}
We exhaustively searched a $10 \times 10$ triangular lattice and showed that it does not contain a strong crescent configuration of size $7$ or $8$ in $L^2$. The search was conducted using a desktop computer with an Intel i7-6700K processor and 16GB RAM, and the duration was roughly 400 hours. 
\end{rem}


\subsection{Strong crescent configurations in $L^\infty$}\label{section:crescentloo}

\linftycrescent*

\begin{proof}
A square lattice was searched using a backtracking algorithm.\footnote{
Our code can be found at \url{https://github.com/the-set-of-sets/l1_linfty}.
} In the following table, we list a strong crescent configuration of size $n$ produced by our code for $4 \leq n \leq 8$:

\begin{center}
    \begin{tabular}{c|l}
         $n $ & Strong crescent configuration of size $n$, in $L^\infty$  \\
         \hline 
         4 & $\{ (0, 0), (0, 1), (1, 1), (1, 3) \} $ \\
         5 & $\{ (0, 0), (0, 1), (1, 1), (1, 3), (2, 4) \} $ \\
         6 & $\{ (0, 0), (0, 1), (1, 3), (2, 1), (2, 4), (4, 5) \} $ \\
         7 & $\{ (0, 0), (0, 4), (1, 2), (2, 3), (3, 1), (5, 4), (6, 6) \} $ \\
         8 & $\{ (0, 0), (0, 6), (1, 3), (2, 4), (3, 2), (4, 1), (5, 5), (6, 7) \} $
    \end{tabular}
\end{center}

The distance graphs are depicted in Figures \ref{fig:loocrescent456} and \ref{fig:loocrescent78}. 

\begin{figure}[h!]
    \centering
    \includegraphics[scale=0.3]{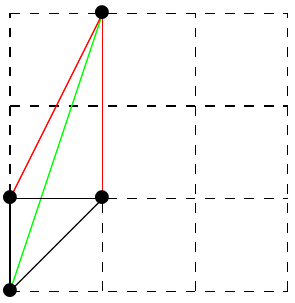}
    \includegraphics[scale=0.3]{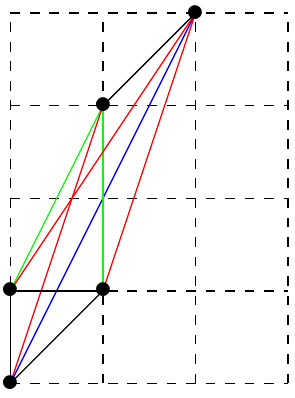}
    \includegraphics[scale=0.3]{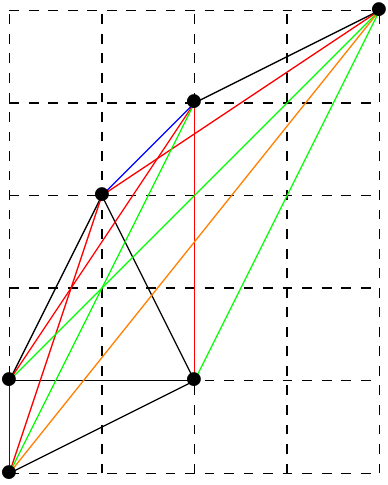}
    \caption{Strong crescent configurations of size 4, 5, 6 in $L^\infty$.}
    \label{fig:loocrescent456}
\end{figure}

\begin{figure}[h!]
    \centering
    \includegraphics[scale=0.3]{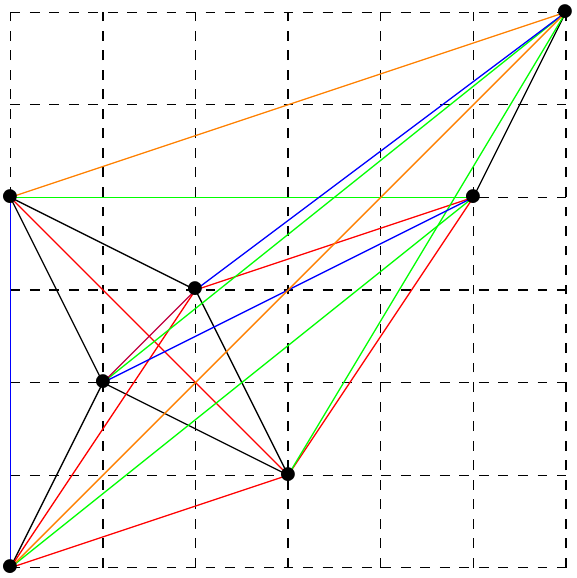}
    \includegraphics[scale=0.3]{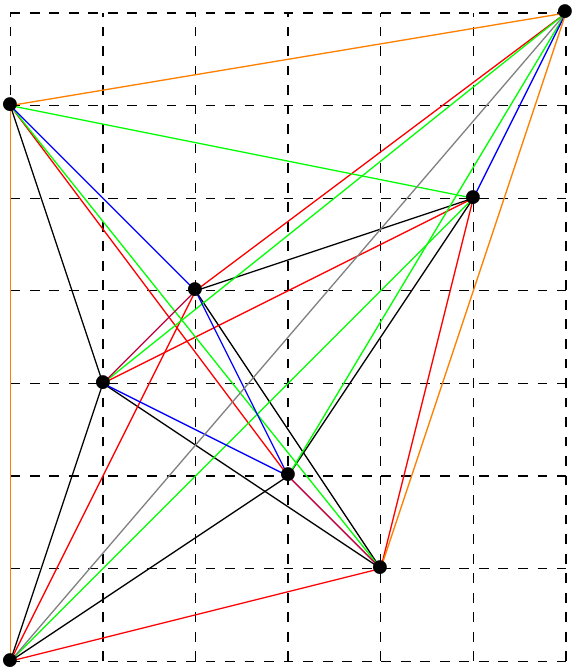}
    \caption{Strong crescent configurations of size 7, 8 in $L^\infty$.}
    \label{fig:loocrescent78}
\end{figure}

\end{proof}

\begin{rem}\label{rem:linftysearch}
We exhaustively searched a $9 \times 9$ lattice and showed that it does not contain a strong crescent configuration of size 9 in $L^\infty$. The search was conducted using a desktop computer with an Intel i3-7100 processor and 16GB RAM, and the duration was roughly 11 hours. 
\end{rem}

By Lemma \ref{lemma:duality}, $L^1$ and $L^\infty$ are dual norms in $\R^2$. This gives a correspondence between strong crescent configurations in $L^1$ and $L^\infty$. Let $P$ be a set of points in $L^\infty$. Let $P'$ be the set consisting of every point in $P$ rotated by $45^\circ$ counterclockwise. Then $P$ is a strong crescent configuration if and only if $P'$ is a strong crescent configuration. \\

\noindent \textbf{Corollary \ref{lonecrescent}. }\textit{In the $L^1$ norm, there exist strong crescent configurations of sizes $n \leq 8$.}\\


\section{Future work}\label{section6}


\subsection{Disproving the existence of large (strong) crescent configurations}
The main open question about crescent configurations is the following: for which $n$ do there exist crescent configurations of size $n$? The only known constructions of crescent configurations are of sizes $n \leq 8$ \cite{Liu, Pal87, Pal89, Erd89}. Attempts to find larger crescent configurations via computer search have been unsuccessful \cite{BGMMPS}. Erd\H{o}s \cite{Erd89} conjectured the following. 

\begin{conj}[Erd\H{o}s, 1989]\label{con:erdoscrescent}
For sufficiently large $N$, there do not exist crescent configurations of size $n$. 
\end{conj}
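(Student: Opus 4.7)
The plan is to attack the conjecture by combining structural analysis of high-multiplicity distances with classical incidence bounds. First, I would examine the graph $G_{n-1}$ on the $n$ points whose edges correspond to the distance $d_{n-1}$ of maximum multiplicity. Since no four points lie on a common circle, each vertex of $G_{n-1}$ has degree at most three: if a vertex $v$ had four neighbors at distance $d_{n-1}$, these four neighbors would lie on a common circle centered at $v$. Hence $G_{n-1}$ has $n$ vertices, $n-1$ edges, and maximum degree three, forcing it to be a disjoint union of paths, cycles, and small trees---a very rigid combinatorial object.

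Iterating on the graphs $G_{n-2}, G_{n-3}, \dots$ associated with the lower multiplicities yields analogous, though progressively weaker, degree constraints. The central combinatorial fact I would try to exploit is that these multiplicity graphs must tile the complete graph $K_n$ exactly, since $\sum_{i=1}^{n-1} i = \binom{n}{2}$. I would combine this simultaneous tiling with the Spencer--Szem\'eredi--Trotter $O(n^{4/3})$ bound on pairs at any fixed distance under general position, aiming to show that for large $n$, no partition of $K_n$ into such multiplicity graphs can be simultaneously realized by an actual point set in general position.

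The third step would be to import the rigidity philosophy developed in Section \ref{section4}. Although Theorem \ref{thm:crescentlinfty} is stated for $L^\infty$, its moral---that line-like distance patterns force rigid structure which eventually violates general position---should plausibly transfer to $L^2$, perhaps via an angular ``type'' analysis on the Euclidean unit circle in place of the four axial directions of $L^\infty$. A Euclidean analogue of Theorem \ref{thm:crescentlinfty}, even restricted to near-linear or near-circular configurations, would sharply limit the candidate crescent configurations of large size and, combined with the tiling argument above, might suffice to close the conjecture.

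The principal obstacle, and the reason this conjecture has resisted proof for over three decades, is that the constraints from the top few multiplicity graphs weaken rapidly as $i$ decreases, while the lower multiplicities permit too much freedom for direct combinatorial analysis. A more tractable intermediate goal is to extend the lattice search of Burt et al.\ \cite{BGMMPS} to larger or to non-triangular lattices, or to prove quantitative structural bounds (for instance on the ratio of diameter to minimum distance, or on the number of collinear triples after a small perturbation) for crescent configurations of size $n$; such partial results would represent genuine progress toward Erd\H{o}s' full conjecture, even if the full statement remains out of reach via these methods.
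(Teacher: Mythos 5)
This statement is Conjecture \ref{con:erdoscrescent}, not a theorem: the paper states it as an open problem of Erd\H{o}s (1989) and offers no proof, and your proposal does not prove it either---it is a research program, as your own closing paragraph concedes. Judged as such, its components have concrete gaps. Your first observation is sound: general position forces every distance graph $G_i$ to have maximum degree $3$, since four equidistant neighbors of a vertex $v$ would lie on a common circle centered at $v$ (note, though, that maximum degree $3$ permits branching, so $G_{n-1}$ need not decompose into paths and cycles alone). The tiling identity $\sum_{i=1}^{n-1} i = \binom{n}{2}$ is also correct. But the incidence step is vacuous: the Spencer--Szemer\'edi--Trotter bound says a single distance can repeat $O(n^{4/3})$ times, whereas the largest multiplicity in a crescent configuration is $n-1 = O(n)$, far below that threshold. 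The bound therefore imposes no constraint at all, and the claim that ``no partition of $K_n$ into such multiplicity graphs can be simultaneously realized'' is just a restatement of the conjecture with no mechanism to force it. Every known obstruction of this incidence type is too weak by a polynomial factor, which is precisely why the problem is open.

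Your third step---transferring Theorem \ref{thm:crescentlinfty} to $L^2$---fails for two structural reasons. First, the proof of that theorem is intrinsically tied to the polygonal geometry of the $L^\infty$ ball: the four flat sides and corners yield the finite alphabet $T$ of edge types, which makes the case analysis in Section \ref{section:lemmas_proofs} a finite check. In $L^2$ the directions form a continuum, and no finite type decomposition exists; worse, the rigidity conclusion is simply false in the Euclidean setting, since equally spaced points on a circular arc give continuous families of line-like configurations (cf.\ Theorem \ref{thm:circlelinelike} and Figure \ref{fig:l2_trivial}) that are neither collinear nor perpendicular perturbations. Second, Theorem \ref{thm:crescentlinfty} concerns \emph{line-like} configurations, and a crescent configuration need not be line-like: nothing in Definition \ref{def:euclcrescent} forces the distance graph to be isomorphic to that of equally spaced collinear points, so even a correct Euclidean analogue would not constrain generic crescent configurations of large size. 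Your fallback suggestions (extending the lattice searches of \cite{BGMMPS}, or proving quantitative structural bounds) are reasonable and consistent with what the paper itself does in Remarks \ref{rem:l2search} and \ref{rem:linftysearch}, but they are explicitly partial results, not a path you have shown closes the conjecture.
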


The same question can also be posed about strong crescent configurations. Given a norm $||\cdot||$, for which $n$ do there exist strong crescent configurations of size $n$ in $||\cdot||$? In Section \ref{section5}, we provide explicit constructions of strong crescent configurations of sizes $n \leq 8$ in $L^1$ and $L^\infty$, and of sizes $n \leq 6$ in $L^2$. Moreover, we performed computer searches and showed that certain lattice regions do not contain larger strong crescent configurations in $L^\infty$ and $L^2$ (cf. Remarks \ref{rem:l2search}, \ref{rem:l1search}, \ref{rem:linftysearch}). Extending Conjecture \ref{con:erdoscrescent}, we conjecture the following. 

\begin{conj}\label{con:strongcrescent}
Fix a norm $||\cdot||$. For sufficiently large $N$, there do not exist strong crescent configurations of size $n$ in $||\cdot||$. 
\end{conj}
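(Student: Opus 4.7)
The plan is a two-pronged attack that first restricts the geometric form of any hypothetical large strong crescent configuration, and then rules out all remaining forms for $n$ sufficiently large. Fix $||\cdot||$ and suppose, for contradiction, that arbitrarily large strong crescent configurations exist in this norm. The first prong extracts rigidity from the top of the multiplicity spectrum: the distance $d_{n-1}$ of maximal multiplicity defines a graph $G_{n-1}$ on $n$ vertices with $n-1$ edges. The strong general position conditions impose an absolute upper bound on the maximum degree of $G_{n-1}$. In the strictly convex setting, Lemma \ref{lemma:twocircles} combined with the no-four-concyclic condition restricts each vertex to at most three neighbors at the common distance; in the non-strictly convex case, the no-line-like-quadruple clause of Definition \ref{def:generalgeneralposition} plays the analogous role, though the bound must be made norm-dependent. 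The resulting combinatorial topology of $G_{n-1}$ falls into a finite list of types (unions of paths, trees, and short cycles), each of which is highly restrictive on the global point arrangement.

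The second prong iterates this analysis to $G_{n-2}, G_{n-3}, \dots, G_{n-k}$ for $k$ growing with $n$. Each additional multiplicity constraint cuts the moduli space of realizations. Once enough high multiplicities are pinned down, the configuration becomes over-determined: either the strong general position conditions must fail (forcing three collinear points, four concyclic points, or a line-like quadruple), or the next required multiplicity cannot be realized. A template for how this rigidity cascade should proceed is provided by Theorem \ref{thm:crescentlinfty}: in $L^\infty$, once an approximate line-like structure is forced, the configuration must be a perpendicular perturbation, and one would then rule out the crescent condition for large $n$ by a direct combinatorial check in the spirit of the arguments of Section \ref{section:lemmas_proofs}. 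The natural intermediate theorem to establish is: for any norm $||\cdot||$, every strong crescent configuration of size $n \geq N_0(||\cdot||)$ is, up to affine scaling, a perpendicular perturbation of equally spaced points on a line, to which a classification argument of the type in Theorem \ref{thm:crescentlinfty} can then be applied.

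The main obstacle, and the reason this conjecture remains open even in the classical Euclidean setting (Conjecture \ref{con:erdoscrescent}), is making the implication ``many high-multiplicity distances $\Rightarrow$ near-linear structure'' quantitative and unconditional. Individually the graphs $G_i$ carry only weak constraints, and their joint realization is governed by a nonlinear polynomial system for which neither current incidence-geometric methods (which excel at lower bounds for distinct distances rather than exact multiplicity control) nor polynomial-method techniques in the style of Guth--Katz yield the required upper bound. A reasonable first target is to settle the conjecture in $L^\infty$ by leveraging the computational evidence from Remark \ref{rem:linftysearch} together with the line-like rigidity of Theorem \ref{thm:crescentlinfty}, and then attempt to lift the argument to general norms via a perturbation/compactness analysis on the space of unit balls, using the strictly convex / non-strictly convex dichotomy to handle the two regimes separately. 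The hardest step throughout will be controlling the interaction between the multiplicity-$n-1$ and multiplicity-$n-2$ graphs, since it is precisely this interaction that encodes the ``crescent'' rigidity and which must ultimately contradict strong general position.
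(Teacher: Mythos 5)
The statement you were asked about is Conjecture \ref{con:strongcrescent}: it is an open problem, and the paper offers no proof of it --- only constructions of strong crescent configurations up to size $8$ and exhaustive lattice searches (Remarks \ref{rem:l2search} and \ref{rem:linftysearch}) as circumstantial evidence. So there is no proof in the paper to compare yours against, and your submission, to its credit, is honest that it is a research program rather than a proof. As a proof attempt, however, it has a genuine gap: the entire second prong (the ``rigidity cascade'') is asserted, not argued. The first prong is also weaker than you suggest. Condition (2) of Definition \ref{def:generalgeneralposition} does give a maximum degree of $3$ in every distance graph $G_i$ --- the neighbors of a vertex $v$ at a common distance $d$ lie on the circle $B_{||\cdot||}(v,d)$, and four of them would be concyclic --- and this works in \emph{every} norm, with no need for Lemma \ref{lemma:twocircles} or a norm-dependent bound. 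But a graph on $n$ vertices with $n-1$ edges and maximum degree $3$ ranges over an exponentially large family as $n$ grows; there is no ``finite list of types,'' and nothing in the strong general position conditions pins down the global point arrangement from this combinatorial data alone. The implication ``many high-multiplicity distances $\Rightarrow$ near-linear structure,'' which you correctly identify as the crux, is precisely what is missing, and without it nothing is proved.

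Two further structural problems with the plan. First, Theorem \ref{thm:crescentlinfty} cannot serve as the template you want: its hypothesis is that the points form a \emph{line-like} configuration, i.e., that the full distance graph is isomorphic to that of equally spaced collinear points. This is far stronger than the crescent multiplicity condition, and deriving line-likeness from the multiplicity spectrum is exactly the open heart of the problem, not a step the theorem supplies. Second, your proposed intermediate theorem --- every large strong crescent configuration is a perpendicular perturbation of equally spaced points on a line --- is self-defeating as a stepping stone. Such perpendicular perturbations (as constructed in Theorem \ref{thm:linelinelike} and Example \ref{ex:loobadlinelike}) are line-like configurations, and any line-like configuration of size $n \geq 4$ contains line-like quadruples among consecutive points, which condition (3) of strong general position forbids. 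So the intermediate statement, if true, would say that no large strong crescent configurations exist at all --- it is equivalent in strength to the conjecture itself, not an intermediate target, and the difficulty is simply relocated rather than reduced. If you want a tractable first project in this direction, the paper's own Conjecture \ref{con:crescent_nonstrictly} (extending the $L^\infty$ classification of line-like crescent configurations to general non-strictly convex norms) is the natural place to start, since there the line-like hypothesis is given rather than something you must establish.
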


We also pose a strengthening of Conjecture \ref{con:strongcrescent} for strictly convex norms. By Corollary \ref{lemma:line_like_four}, given three points $A,B,C$ which form a line-like configuration of size three in $||\cdot||$, there exist exactly two points $D,E$ so that $ABCD$ and $ABCE$ are line-like configurations in $||\cdot||$. In particular, at least one of $ABCD$ and $ABCE$ is a parallelogram. We can modify condition (3) of the definition of strong general position (Definition \ref{def:generalgeneralposition}) so that instead of forbidding all line-like configurations of size four, we forbid line-like configurations of size four which are not non-rectangular parallelograms. Call the sets of points which satisfy the corresponding Definition \ref{def:generalcrescent} \textit{weak crescent configurations}. 

\begin{conj}\label{con:weakcrescent}
In any strictly convex norm $||\cdot||$, we conjecture that for sufficiently large $N$, there do not exist weak crescent configurations of size $n$ in $||\cdot||$.
\end{conj}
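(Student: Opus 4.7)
The conjecture extends Erd\H{o}s' Conjecture \ref{con:erdoscrescent} (which is the $L^2$ case) to all strictly convex norms, so any proof must at a minimum resolve the long-standing Euclidean problem. My plan is to combine the rigidity that Lemmas \ref{lemma:twocircles} and \ref{lemma:line_like_four} impose in strictly convex norms with a counting argument driven by the prescribed multiplicity pyramid $1,2,\dots,n-1$.

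First I would assume toward contradiction that $P$ is a weak crescent configuration of size $n$ in a strictly convex norm $||\cdot||$, with $n$ arbitrarily large. For each $1 \leq i \leq n-1$, let $G_i$ be the graph on $P$ whose edges are the pairs realizing the distance of multiplicity $i$. The ``no four concyclic'' condition (inherited from strong general position) forces each $G_i$ to have maximum degree at most $3$: otherwise four neighbors of a common vertex would share a $||\cdot||$-circle. Hence each $G_i$ is essentially a disjoint union of paths and cycles with some low-degree garnish, and in particular $G_{n-1}$, which has $n-1$ edges on $n$ vertices and average degree close to $2$, must contain either a path or a cycle of length tending to infinity with $n$, by a standard Erd\H{o}s--Gallai style argument.

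Next I would exploit such a long path as a line-like sub-configuration of $P$. By Lemma \ref{lemma:line_like_four} applied iteratively, combined with the weak-crescent restriction that every four-point line-like sub-configuration is a non-rectangular parallelogram, one would try to show that a long line-like chain must lie on a structured strip generated by translation. In $L^2$ this strip is forced onto a common circle by the equidistant perpendicular-bisector property (cf. Remark \ref{rem:l2stronggeneral}), directly violating ``no four concyclic''; in arbitrary strictly convex norms, the analogous conclusion requires transporting the rigidity along the path by repeated applications of Lemma \ref{lemma:twocircles}. The cycle case would be handled by the same mechanism plus the observation that a closed chain of equal $||\cdot||$-distances is very rigid in strictly convex norms.

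The main obstacle is precisely this global rigidity step: even in $L^2$, promoting ``every line-like quadruple is a non-rectangular parallelogram'' to ``no long line-like chain exists'' is essentially equivalent to Erd\H{o}s' open Conjecture \ref{con:erdoscrescent}. In general strictly convex norms, Lemma \ref{lemma:twocircles} controls pairs of circles only locally, so assembling local rigidity into a global contradiction appears to require genuinely new input---plausibly a polynomial-method or incidence estimate in the spirit of \cite{GK}, adapted to an arbitrary strictly convex unit ball. A secondary difficulty is that the sketch above uses only the top multiplicity $n-1$; combining the multiplicities $n-1, n-2, \dots$ simultaneously into a single double-counting inequality, perhaps leveraging Valtr's $O(n^{4/3})$ bound \cite{Va} on each $G_i$, is where I would expect the remaining substantive combinatorial work to live.
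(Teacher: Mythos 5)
You are attempting to prove Conjecture \ref{con:weakcrescent}, which the paper itself leaves open: no proof is given anywhere in the paper, and indeed the paper notes (via Remark \ref{rem:l2stronggeneral}) that in $L^2$ weak crescent configurations coincide with crescent configurations, so the conjecture subsumes Erd\H{o}s' Conjecture \ref{con:erdoscrescent}, open since 1989. You correctly recognize this, and your sketch is honest that the ``global rigidity'' step is exactly the open problem. So there is no paper proof to compare against, and your proposal is, as you concede, a research plan rather than a proof.

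That said, two of the steps you present as firm before reaching the admitted obstacle are themselves flawed. First, the combinatorial claim fails: a graph on $n$ vertices with $n-1$ edges and maximum degree at most $3$ need not contain a long path or cycle. For example, a disjoint union of $k$ triangles together with one small tree on the remaining vertices has exactly $n-1$ edges, maximum degree at most $3$, and all components of bounded size; Erd\H{o}s--Gallai guarantees a path with $k$ edges only when the edge count exceeds $(k-1)n/2$, which $n-1$ edges never does for $k \geq 3$. So $G_{n-1}$ carries no guaranteed long path, and you would need geometric input (e.g., limiting how many equilateral triangles a strictly convex norm permits in such a set) even to start. Second, even granting a long path in $G_{n-1}$, such a path is merely an equilateral chain: consecutive distances are equal, but nothing forces the higher-order distances $d(p_i,p_j)$ to depend only on $|j-i|$, which is what Definition \ref{def:general_line_like} requires. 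Consequently Lemma \ref{lemma:line_like_four} cannot be iterated along the chain, and the weak-crescent restriction on line-like quadruples never engages. Both gaps would need to be repaired before the (openly acknowledged) rigidity obstacle is even reached.
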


In $L^2$, the weak crescent configurations are precisely the crescent configurations (cf. Remark \ref{rem:l2stronggeneral}).  When $||\cdot||$ is the $L^2$ norm, Conjecture \ref{con:weakcrescent} is equivalent to Conjecture \ref{con:erdoscrescent}. 

\subsection{Disproving the existence of large line-like configurations in most norms}\label{section:uglylinelike}

By Theorem \ref{thm:linelinelike} and Theorem \ref{thm:circlelinelike}, if the unit circle of $||\cdot||$ contains a line segment or an arc contained in an $L^2$ circle centered at the origin, then there exist infinitely many (after scaling and translating) line-like configurations of size $n$ in $||\cdot||$. These constructions are generalizations of the two line-like configurations in $L^2$: equally spaced points on a line and on a circle (cf. Figure \ref{fig:l2_trivial}). An interesting question is whether there exist arbitrarily large line-like configurations which do not rely on the structure of a straight line or $L^2$ arc. We conjecture that this is not the case. 

\begin{conj}\label{con:uglylinelike}
Let $||\cdot||$ be a norm whose unit circle does not contain a line segment or an arc contained in an $L^2$ circle centered at the origin. Then for sufficiently large $N$, the only line-like configurations of size $n \geq N$ in $||\cdot||$ are $n$ equally spaced points on a line.
\end{conj}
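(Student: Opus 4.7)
The plan is to normalize $d_1 = 1$ and analyze the configuration through its direction vectors $v_i := P_{i+1} - P_i$, each of which lies on the unit circle $\partial B$ of $||\cdot||$. The line-like property is equivalent to
\[ \bigl\| v_i + v_{i+1} + \cdots + v_{i+k-1} \bigr\| \;=\; d_k \qquad \text{for all valid } i, k, \]
where $d_1 = 1$ and, by the triangle inequality, $1 \leq d_2 \leq 2$. I would split on the value of $d_2$.

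\textbf{The degenerate case $d_2 = 2$.} Here every consecutive pair satisfies $\|v_i + v_{i+1}\| = \|v_i\| + \|v_{i+1}\|$, so the triangle inequality is tight. Since the hypothesis excludes line segments from $\partial B$, the norm is strictly convex, and Definition \ref{def:strictlyconvex} forces $v_{i+1} = \lambda v_i$ with $\lambda \geq 0$. Both are unit vectors, so $\lambda = 1$; iterating gives $v_1 = \cdots = v_{n-1}$, and the $P_i$ are equally spaced on a line, as required.

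\textbf{The main case $d_2 < 2$.} The configuration genuinely bends at every step. By Lemma \ref{lemma:twocircles}, the set $\partial B \cap B_{||\cdot||}(-v_i, d_2)$ has at most two elements, giving at most two candidate values for $v_{i+1}$ given $v_i$. One candidate is $v_{i+1} = v_{i-1}$ (the ``zig-zag'' choice, which one checks automatically satisfies the second-order constraint); the other is the ``forward bend.'' The higher-order constraint $\|v_i + v_{i+1} + v_{i+2}\| = d_3$ should rule out the zig-zag continuation once $n \geq 5$ except in degenerate symmetric cases, producing a well-defined ``next-direction'' map $\Phi$ on the orbit. The strategy is then to show that a long orbit of $\Phi$, compatible with all higher-order constraints $\|v_i + \cdots + v_{i+k-1}\| = d_k$ simultaneously, forces its closure in $\partial B$ to be an $L^2$ origin arc. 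The proposed route has three stages: (i) show that the points $P_1, \dots, P_n$ lie on a convex arc $\Gamma$, so that $\Gamma$ carries a natural ``combinatorial order'' matching its convex parametrization; (ii) pass to a limit by rescaling so that $d_1 \to 0$ and $n \to \infty$ with the diameter of the configuration fixed, obtaining a limiting $C^1$ convex curve $\tilde\Gamma$ with a well-defined chord-length function $\rho$ satisfying $\|\tilde\gamma(s+t) - \tilde\gamma(s)\| = \rho(t)$ for all $s, t$ in the parameter interval; (iii) deduce that such an arc must be an $L^2$ origin arc contained in $\partial B$ after suitable scaling and translation, yielding a contradiction.

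\textbf{Main obstacle.} The hardest step is (iii): upgrading the chord-length homogeneity of $\tilde\Gamma$ to the rigid conclusion that $\tilde\Gamma$ parametrizes an $L^2$ arc lying inside $\partial B$. In $L^2$ this is the classical fact that a convex curve whose chord lengths depend only on arc-length separation must be a circle, but here we must carry out the analogue with the chord length measured in a non-Euclidean norm $||\cdot||$ and conclude both that $\tilde\Gamma$ is an $L^2$ arc and that this arc is inherited from $\partial B$ itself---a rigidity statement that essentially says the orbit of $\Phi$ can only be ``Euclideanly rotation-like'' on a piece of $\partial B$ that is itself $L^2$-round. A secondary difficulty is controlling the limit in step (ii): one must prevent the rescaled configurations from degenerating (collapsing onto a line, or onto a single point) before the limit curve is extracted, which will require a priori lower bounds on the ``total bend'' $\sum \angle(v_i, v_{i+1})$ in terms of $n$ and $d_2$, together with compactness of the space of strictly convex norms under uniform convergence of unit balls.
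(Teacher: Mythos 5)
The statement you are working on is Conjecture~\ref{con:uglylinelike}: the paper does not prove it. It is posed as an open problem, supported only by the numerical experiments of Section~\ref{section:lplinelike} for $L^p$ norms (two searches, one assuming the points lie on an $L^p$ ball, one relaxed, both finding nothing for $p \neq 2$). So there is no proof of the paper's to compare against; the only question is whether your argument settles the conjecture, and it does not. To give credit where due: your degenerate case $d_2 = 2$ is correct and complete --- no segment in $\partial B$ gives strict convexity, the equality case of the triangle inequality (Definition~\ref{def:strictlyconvex}) forces $v_{i+1} = \lambda v_i$ with $\lambda = 1$, hence equally spaced collinear points. The reduction via Lemma~\ref{lemma:twocircles} to at most two candidates $v_{i+1} \in \partial B \cap B_{||\cdot||}(-v_i, d_2)$, one of which is $v_{i-1}$, is also sound, and is a reasonable first structural observation.

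Everything after that, however, is a research program rather than a proof, and the three stages you list are exactly where the content of the conjecture lives. For (i), you never show the points lie on a convex arc; nothing in the line-like conditions visibly precludes inflections in a general strictly convex norm, and your elimination of the zig-zag branch is asserted, not proved --- taking $v_{i+1} = v_{i-1}$ gives $d_3 = ||2v_{i-1} + v_i||$, which yields no evident contradiction with the other constraints. For (ii), the rescaling limit needs precisely the a priori non-collapse and total-bend bounds you admit you lack; note also that the norm is fixed in the conjecture, so compactness of the space of norms is beside the point --- the issue is uniform control of a sequence of configurations with $n \to \infty$, and since collinear solutions exist for every $n$, you must quantify non-collinearity along the sequence or the limit curve may simply be a segment. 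For (iii), the rigidity statement --- a curve whose chord $||\cdot||$-lengths depend only on parameter separation must be an $L^2$ arc, and moreover one inherited from an $L^2$ origin arc of $\partial B$ --- is essentially a restatement of the conjecture in the limit; assuming it is close to assuming what is to be proved. Finally, even if stages (i)--(iii) were carried out, a compactness argument would deliver an asymptotic conclusion (configurations are close to lines or arcs), whereas the conjecture demands exact rigidity for every $n \geq N$, so an additional stability-to-rigidity step would still be missing. In short: a correct opening reduction, but the statement remains open, both in your proposal and in the paper.
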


In Section \ref{section:lplinelike}, we provide numerical evidence toward this conjecture for the special case of the $L^p$ norm. 


\subsection{Classifying line-like crescent configurations in non-strictly convex norms}

In Theorem \ref{thm:crescentlinfty}, we prove that every line-like crescent configuration of size $n \geq 7$ in the $L^\infty$ norm must be a perpendicular perturbation in $L^\infty$. We are particularly interested in generalizing our result to other norms. 

\begin{conj}\label{con:crescent_nonstrictly}
Let $||\cdot||$ be a norm which is non strictly convex. Then there exists some $N$ for which every line-like crescent configuration of size $n \geq N$ in $||\cdot||$ is a perpendicular perturbation in $||\cdot||$. 
\end{conj}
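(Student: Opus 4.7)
\textbf{Proof plan for Conjecture \ref{con:crescent_nonstrictly}.} The plan is to generalize the case analysis from the proof of Theorem \ref{thm:crescentlinfty}. Let $\|\cdot\|$ be a non-strictly convex norm, let $\mathcal L = \{L_1, \dots, L_k\}$ be the collection of maximal line segments contained in $\partial B$ (closed under central symmetry), and suppose $[p_1, \dots, p_n]$ is a line-like crescent configuration of size $n$. After rescaling, assume the first-order distance $d_1 = 1$, so each difference vector $v_i := p_{i+1} - p_i$ lies on $\partial B$. First I would set up a generalized "type" symbol for each $v_i$ recording (a) which segment in $\mathcal L$ contains $v_i$ when applicable, or else (b) which strictly convex arc of $\partial B$ contains $v_i$, together with an orientation. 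This extends the eight-symbol alphabet $T$ used for $L^\infty$ to an alphabet tailored to the unit ball of $\|\cdot\|$.

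The central dichotomy is on the value of the second-order distance $d_2$, which satisfies $d_2 \leq 2$ by the triangle inequality. The equality case of the triangle inequality in a non-strictly convex norm (Definition \ref{def:strictlyconvex}) asserts that $\|v_i + v_{i+1}\| = 2$ is equivalent to $v_i, v_{i+1}$ being collinear with $0$ or lying on a common segment $L_j \in \mathcal L$; the no-three-on-a-line hypothesis forces the second alternative. In the case $d_2 = 2$ I would then argue inductively along the chain: each consecutive pair $(v_i, v_{i+1})$ lies on a common segment, so either all $v_i$ belong to one fixed $L_j$ (up to sign), or the sequence transitions between two segments $L_j, L_{j'}$ meeting at a shared endpoint of $\partial B$ (a "corner"). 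The corner case reduces to the analogue of Lemmas \ref{lemma:bad_BBB}, \ref{lemma:bad_xBx'_xBy}, \ref{lemma:bad_xBbx'y} from Section \ref{section4}: four consecutive bouncing steps between two segments build a parallelogram of equal sides whose vertices lie on a common $\|\cdot\|$-circle, violating the no-four-on-a-circle condition. Thus, after finitely many exceptions, all $v_i$ must lie on a single $L_j$, yielding a perpendicular perturbation of a line in the direction $L_j^\perp$.

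The harder case is $d_2 < 2$. Here every consecutive pair $(v_i, v_{i+1})$ lies strictly inside no common edge of the unit ball, so each $v_i$ either sits on a strictly convex arc of $\partial B$ or on a segment paired with a vector on a different part of $\partial B$. The rigidity argument I envision is two-pronged. First, for each strictly convex arc $A \subset \partial B$ I would establish an "arc monotonicity" analogue of Theorem \ref{thm:circlelinelike}: if a long substring of the $v_i$'s stays in $A$, the points $p_i$ essentially lie on a curve homothetic to $A$, and then the no-four-on-a-$\|\cdot\|$-circle condition caps the length of such a substring. Second, for mixed substrings of $v_i$'s alternating between different arcs or between arcs and segments, I would mirror the $L^\infty$ case obstructions (Lemmas \ref{lemma:bad_xx'}, \ref{lemma:bad_xyxy'}, \ref{lemma:bad_xyxy}, \ref{lemma:bad_xyx'y'}) by examining the $k$-th order distance identities $d(p_i, p_{i+k}) = d(p_j, p_{j+k})$ as polynomial relations in the "free parameters" of the chosen type sequence, and show each such relation rules out extensions beyond a length depending only on $\|\cdot\|$.

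The main obstacle will be the $d_2 < 2$ case in its full generality: the $L^\infty$ proof leverages the axis-aligned geometry of its unit ball to reduce to tractable algebraic identities, whereas for a general non-strictly convex norm one has little explicit control over the strictly convex arcs of $\partial B$. A reasonable first target is to prove the conjecture under the additional hypothesis that $\partial B$ is a finite union of line segments (i.e., $\partial B$ is polygonal), which includes $L^1$ and $L^\infty$; there, the alphabet of types is finite, and the combinatorial case analysis of Section \ref{section:lemmas_proofs} should extend uniformly, with a size bound depending on the number of segments of $\partial B$. Handling norms whose unit circle mixes segments with strictly convex arcs requires further ideas, very likely a quantitative version of Lemma \ref{lemma:twocircles} describing how two $\|\cdot\|$-circles intersect near a flat portion of $\partial B$.
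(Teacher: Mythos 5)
First, a point of order: the paper does not prove this statement. Conjecture \ref{con:crescent_nonstrictly} appears in the future-work section (Section \ref{section6}) precisely because the authors could only establish the special case $||\cdot|| = L^\infty$ (Theorem \ref{thm:crescentlinfty}); there is no paper proof to compare yours against. So the relevant question is whether your plan closes the gap between the $L^\infty$ theorem and the general non-strictly convex case, and it does not. Your $d_2 = 2$ branch is the more plausible half, and your use of the equality case of the triangle inequality to place consecutive difference vectors $v_i, v_{i+1}$ on a common segment of $\partial B$ is correct. But your reduction of that branch to ``all $v_i$ in one segment, or bouncing between two segments at one corner'' is too coarse: once $\partial B$ has several edges, the sequence of $v_i$ can migrate through many edges and corners (each consecutive pair sharing an edge), and the $L^\infty$ lemmas you cite (\ref{lemma:bad_BBB}, \ref{lemma:bad_xBx'_xBy}, \ref{lemma:bad_xBbx'y}) are not a single ``parallelogram on a common circle'' obstruction --- they are explicit coordinate computations over a four-letter corner alphabet, using collinearity violations and corner-circle arguments case by case. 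For a general polygonal ball the alphabet grows with the number of edges and none of these computations transfer verbatim; for a non-polygonal ball the alphabet can be infinite, which you acknowledge only in passing.

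The deeper gap is the $d_2 < 2$ branch, which in full generality contains essentially all of Conjecture \ref{con:uglylinelike}. Your ``arc monotonicity'' step --- that a long run of $v_i$'s in a strictly convex arc $A$ forces the $p_i$ onto a curve homothetic to $A$ --- is not true as stated, and in the one case where something like it holds, it works against you: if $\partial B$ contains an $L^2$ origin arc, Theorem \ref{thm:circlelinelike} and Corollary \ref{cor:circlelinelike} produce arbitrarily large line-like configurations with no three points collinear that are \emph{not} perpendicular perturbations, so the only way to rescue the conjecture there is to show four of those points always land on a common $||\cdot||$-circle --- a delicate geometric claim your sketch never engages. Likewise, your proposal to treat higher-order distance identities ``as polynomial relations in the free parameters'' presupposes the finite parameterization available in $L^\infty$, where every edge is a coordinate-aligned segment; a strictly convex arc of a general unit circle admits no such finite coordinate description, which is exactly why the authors could offer only numerical evidence in Section \ref{section:lplinelike}. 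Your suggested first target --- $\partial B$ a finite union of segments --- is a sensible and probably tractable research program with a bound depending on the number of edges, but as written the proposal is a plan with two acknowledged open holes, not a proof of the conjecture.
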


For the definition of line-like crescent configurations, see Definition \ref{def:line_like_crescent}. For the definition of perpendicular perturbations, see Definition \ref{def:perp_perturb}.


\subsection{Extensions to higher dimensions}
In this paper, we only consider normed spaces $(\R^2, ||\cdot||)$. Burt et al. \cite{BGMMPS} considered a generalization of the problem of Euclidean crescent configurations to higher dimensions. They provided constructions of crescent configurations of size $n$ in $\R^{n - 2}$ for all $n \geq 3$. Can the notion of higher dimensional crescent configurations be appropriately generalized to arbitrary norms $||\cdot||: \R^n \to \R$?


\vspace{0.3cm}
\end{document}